\definecolor{lavender}{rgb}{0.5,0,1.0}
\definecolor{NormalGreen}{RGB}{0,220,0}
\definecolor{ChillBlue}{RGB}{0,182,255}
\definecolor{MyOrange}{RGB}{255,150,0}
\definecolor{Maroon}{RGB}{150,0,0}
\definecolor{Pink}{RGB}{255,0,255}
\definecolor{MyPurple}{RGB}{180,0,255} 
\newtheorem{theorem}{Theorem}[section]
\newtheorem*{theorem*}{Theorem}
\newtheorem{lemma}[theorem]{Lemma}
\newtheorem{corollary}[theorem]{Corollary}
\newtheorem{proposition}[theorem]{Proposition}
\newtheorem*{thma}{\Cref{thm:near-homomesy}}
\newtheorem*{thmb}{\Cref{thm:OrderComparedToDeflation}}
\newtheorem*{thmc}{\Cref{thm:swap_orbitmesy}}
\newtheorem*{thmd}{\Cref{cor:swap-deflation-orbitmesy}}
\newtheorem*{thme}{\Cref{Z4_totalsum}}
\theoremstyle{definition}
\newtheorem{definition}[theorem]{Definition}
\newtheorem*{define*}{Definition}
\newtheorem{example}[theorem]{Example}
\newtheorem*{notation*}{Notation}
\newtheorem{remark}[theorem]{Remark}
\newtheorem*{remark*}{Remark}
\newcommand{\as}{\mathcal{A}}
\newcommand{\ase}{\mathcal{A}_e}
\newcommand{\asi}{\mathcal{A}_i}
\newcommand{\tot}{\mathrm{Tot}}
\newcommand{\rev}{\mathrm{rev}}
\DeclareMathOperator{\boxempty}{\boxed{ }}
\DeclareMathOperator{\Inc}{\mathrm{Inc}}
\DeclareMathOperator{\Pro}{\mathrm{Pro}}
\DeclareMathOperator{\jdt}{\mathrm{jdt}}
\DeclareMathOperator{\Con}{\mathrm{Con}}
\newcommand{\swap}{\textnormal{swap}}
\newcommand{\calZ}{\mathcal{Z}}
\newcommand{\orbmesic}{orbitmesic}
\newcommand{\xdownarrow}[1]{%
  {\left\downarrow\vbox to #1{}\right.\kern-\nulldelimiterspace}
}
\title{Orbitmesy and promotion on self-dual posets}
\author{Esther Banaian}
\address{
Department of Mathematics, University of California, Riverside, Riverside, CA 92507 USA}\email{estherbanaian@gmail.com}
\author{Emily Barnard}
\address{Department of Mathematical Sciences, DePaul University, Chicago, IL 60604 USA}\email{e.barnard@depaul.edu}
\author{Sunita Chepuri}
\address{Department of Mathematics and Computer Science, University of Puget Sound, Tacoma, WA 98406 USA}
\email{schepuri@pugetsound.edu}
\author{Jessica Striker}
\address{Department of Mathematics, North Dakota State University, Fargo, ND 58108 USA}
\email{jessica.striker@ndsu.edu}
\subjclass[2020]{05E18, 06A07}
\begin{document}

\begin{abstract}
We introduce the notion of orbitmesy, which is related to  homomesy, a central phenomenon in dynamical algebraic combinatorics.
An orbit $\mathcal{O}$ is said to be orbitmesic with respect to a statistic if the orbit's average statistic value is equal to the global average.
We particularly focus on the action of promotion on increasing labelings of certain fence posets called zig-zag posets, and two statistics, the antipodal sum statistic and the total sum statistic.
We classify all of the orbitmesic promotion orbits for the zig-zag poset with four elements.
Along the way, we investigate how homomesy of one action can be used to find orbitmesic orbits for another action, for the same fixed statistic.
We prove several general results which can be used to find infinite families of orbitmesic orbits for any self-dual poset.
\end{abstract}

\maketitle


\section{Introduction}
\label{sec:intro}
Let $X$ be a finite set, let $\phi:X\to X$ be a bijective action on $X$, and let $\eta:X\rightarrow \mathbb{Z}$ be a statistic on $X$.
\emph{Homomesy} is a central phenomenon in dynamical algebraic combinatorics in which the average statistic value of each orbit of $\phi$ equals the global average
(equivalently, each orbit has the same average statistic value).  
Defined by  Propp and Roby in  \cite{PR2015}, where they studied its occurrence with respect to \emph{rowmotion} on order ideals of  posets, homomesy has been found in many other contexts (see e.g.~the survey articles \cite{Roby2016,Striker2017}). Recently, the study of orbit averages has expanded to instances in which homomesy does not occur, but there is still a notable pattern to the orbit averages. In \cite{elizalde2021rowmotion}, the authors define \emph{homometry}, in which the average statistic value on an orbit is the same as the average value on all other orbits of the same size. 
The authors of \cite{elizalde2021rowmotion} study rowmotion of order ideals of \emph{fence posets} with respect to the statistic $\hat{\chi}$ which measures the size of an order ideal.
A fence poset is a poset of the form $x_1 \lessdot x_2 \lessdot \cdots \lessdot x_a \gtrdot x_{a+1} \gtrdot \cdots \gtrdot x_{a+b} \lessdot x_{a+b+1} \cdots$, as pictured below.
\begin{center}
\begin{tikzpicture}[scale=0.65]
    \node (x1) at (0,0) {$x_1$};
    \node (x2) at (1,1) {$x_2$};
    \node(dots1) at (1.875,1.875){$\iddots$};
    \node (xa) at (3,2.5){$x_a$};
    \node(xa+) at (4,1.5){$x_{a+1}$};
    \draw(x1) -- (x2);
    \draw(xa) -- (xa+);
    \node(dots2) at (4.875,1.5 - .625){$\ddots$};
    \node(xab) at (6,0){$x_{a+b}$};
    \node(xab+) at (7,1){$x_{a+b+1}$};
    \draw(xab) -- (xab+);
    \node(dots3) at (7.875,1.875){$\iddots$};
\end{tikzpicture}
\end{center}
Fence posets are prominent in several areas of mathematics, including the study of gentle and string algebras and cluster algebras \cite{ccanakcci2018cluster}. Fence posets have been of particular interest lately due to a conjecture of Morier-Genoud and Ovsienko \cite{morier2020continued}, later proven by O{\u{g}}uz and Ravichandran \cite{ouguz2023rank}, which stated that the rank generating function of the lattice of order ideals of a fence poset is unimodal.
Importantly, rowmotion of order ideals of fence posets is \emph{not} generally homomesic.
One of the main results of \cite{elizalde2021rowmotion} shows  that for certain fence posets, rowmotion of order ideals is homometric with respect to $\hat{\chi}$ (see \cite[Theorem~4.5]{elizalde2021rowmotion}).

In this paper, we continue the study of fence posets initiated by \cite{elizalde2021rowmotion}, this time focusing on the action of \emph{promotion} on the set increasing labelings of a family of fence posets called \emph{zig-zag posets}, which we denote $\mathcal{Z}_n$, with respect to two statistics, the antipodal sum and total sum statistics.
More precisely, $\mathcal{Z}_n$ is the fence poset with $n$ elements satisfying: $x_1\lessdot x_2 \gtrdot x_3\lessdot x_4\gtrdot \cdots$.
For any finite poset, we define an increasing labeling of $P$ to be a function $f: P \to \mathbb{N}$ such that whenever $x < y$ in $P$, we have $f(x) < f(y)$. Denote  the set of all increasing labelings of $P$ with values in $[q]:=\{1,2,\ldots,q\}$ by  $\Inc^q(P)$.
See \Cref{def:IncrLabeling}.
The total sum statistic of $f\in \Inc^q(P)$ simply sums the labels $f(x)$ for all $x\in P$.
If $P$ is self-dual with order-reversing involution $\kappa:P\to P$,  the \emph{antipodal sum statistic with respect to $x$} for a fixed $x \in P$ is the sum $f(x)+f(\kappa(x))$, and is denoted  $\mathcal{A}_x(f)$.
See \Cref{def:stats}.

The promotion action on $\Inc^q(P)$ was defined in \cite{dilks2019rowmotion} as a generalization of \emph{K-promotion} \cite{Pechenik2014} and \emph{K-jeu de taquin} on \cite{TY2009} \emph{increasing tableaux}. These are themselves generalizations of promotion and jeu de taquin on standard and semistandard Young tableaux \cite{Sch1972}. Increasing labeling promotion was shown in \cite{dilks2019rowmotion} to be equivariant with \emph{rowmotion} of order ideals of a related poset they called the \emph{gamma poset}; this is a natural analogue of results on increasing tableaux \cite{dilks2017resonance} and two-row standard Young tableaux \cite{SW2012}.

Homomesy has been studied with respect to both promotion and rowmotion, and occurs in the following instances, among many others: cardinality under rowmotion on order ideals of the product of two chains poset \cite{PR2015}, other minuscule posets \cite{RushWang}, and the product of three chains where one is of size $2$ \cite{Vorland};  sums of centrally symmetric entries under promotion of rectangular semistandard Young tableaux \cite{BPS}; and sums of centrally symmetric entries of the first/last row/column under $K$-promotion of rectangular increasing  tableaux \cite{Pechenik2014,Pechenik2017}.  
The interplay between promotion and rowmotion make the study of promotion on fence posets a natural next step from \cite{elizalde2021rowmotion}.

Just as rowmotion of order ideals is generally not homomesic for fence posets, promotion also generally fails to be homomesic (with respect to the antipodal sum and total sum statistics).
Rather than focusing on homometry, we introduce a different weakening of homomesy called \emph{orbitmesy}.

\begin{definition}
\label{def:orbitmesy}
Let $\mathcal{O}$ be an orbit of a bijective action on a finite set $X$ and $\eta:X\rightarrow \mathbb{Z}$ be a statistic on $X$. Then the orbit $\mathcal{O}$ exhibits \emph{orbitmesy} (or is \emph{orbitmesic}) with respect to the statistic $\eta$ if 
\[\dfrac{1}{|\mathcal{O}|}\displaystyle\sum_{x\in \mathcal{O}}\eta(x)=\dfrac{1}{|X|}\displaystyle\sum_{x\in X}\eta(x).\]  
\end{definition}
Note that an orbit is trivially orbitmesic if $\mathcal{O}=X$, that is, if the action has only one orbit.
If all orbits are \orbmesic, then the orbitmesy is a homomesy. 
So orbitmesy allows one to move beyond the binary question of whether or not a set, statistic, and action exhibits homomesy to the aim of characterizing which orbits exhibit orbitmesy.
In our main result, we classify all of the orbitmesic orbits of the promotion action on increasing labelings of the zig-zag poset $\mathcal{Z}_4$ with respect to the antipodal sum statistic $\mathcal{A}_x$, where $x$ is any element of $\mathcal{Z}_4$.

\begin{thma}
    Let $P=\calZ_4$ and $x\in P$. Let $\mathcal{O}$ be a promotion orbit of $\Inc^q(P)$. Then $\mathcal{O}$ exhibits orbitmesy with respect to the antipodal sum statistic $\as_x$ if and only if $\mathcal{O}$ avoids $1324$ or is balanced.
\end{thma}
\noindent
Both the pattern avoidance condition and the balanced condition are easy to check (see \Cref{def:pattern,def:Balanced}).

In order to prove this result, we develop two general tools which can be applied to any finite poset $P$.  The first allows us to understand the infinite set of all increasing labelings $\bigcup_{q\in\mathbb{Z}} \Inc^q(P)$ through a finite set of \emph{packed} increasing labelings.  An increasing labeling is packed if it has no ``holes."  The key point is if $f\in \bigcup_{q\in\mathbb{Z}} \Inc^q(P)$ is packed then, $q$ is at most equal to the cardinality of $P$.  We can relate any increasing labeling to a packed labeling through a process known as \emph{deflation}.
\cite[Theorem 6.1]{mandel2018orbits} computed the order of the promotion action on any increasing labeling in terms of packed labelings for miniscule posets.
The next theorem extends \cite[Theorem 6.1]{mandel2018orbits} to any finite poset.
\begin{thmb}
Let $P$ be any finite poset.
Let $f \in \Inc^q(P)$ such that $\Con(f)$ has period $\ell$. Let~$r$ denote the smallest integer such that the deflation $\overline{f}$ is in $\Inc^r(P)$, and suppose the size of the promotion orbit containing $\overline{f}$ is $\tau$. Then, the promotion orbit of $f$ has size \[
\frac{\tau \ell}{\gcd(r\ell/q,\tau)}.
\]
\end{thmb}

The above theorem tells us that the order of promotion on the set $\Inc^q(P)$ is divisible by $q$, as long as $q>|P|$ (see \Cref{cor:Divisible}).
For small $n$, we use \Cref{thm:OrderComparedToDeflation} to explicitly compute the order of promotion for zig-zag posets (see \Cref{thm:linear-Z3,thm:linear-Z4,thm:linear-Z5}).
We also use the ideas of packed labelings and deflation in \Cref{linear_ext,prop:Stable}, where we provide conditions under which orbitmesy of a deflated promotion orbit $\overline{\mathcal{O}}$ can be lifted to orbitmesy of $\mathcal{O}$ for the total sum statistic and the antipodal sum statistic, respectively.

Our second general tool is to analyze the relationship between promotion and an involution which we call $\emph{swap}$.
The swap map is inspired by order ideal complementation and its role in the study of rowmotion on fence posets.
As in \cite{elizalde2021rowmotion}, the swap action requires our poset $P$ to be self-dual.
The key observation is that the swap action is clearly homomesic with respect to the total sum statistic and the antipodal sum statistic $\mathcal{A}_x$, for any $x$ in $P$.
We use this fact to show the following.

\begin{thmc}
Let $P$ be a self-dual poset and let $\mathcal{O}$ be a promotion orbit of elements of $\Inc^q(P)$ such that $\swap(\mathcal{O}) = \mathcal{O}$. Then, $\mathcal{O}$ exhibits orbitmesy with respect to the antipodal sum statistic $\mathcal{A}_x$ for all $x \in P$ and with respect to the total sum statistic $\tot$.
\end{thmc}

Furthermore, we show that swap behaves well with deflation and ``anticommutes'' with promotion (see \Cref{lem:SwapAndDeflation} and \Cref{thm:SwapAndPro}).
This leads to the following result, which allows us to  lift the orbitmesy of a deflated orbit $\overline{\mathcal{O}}$ up to $\mathcal{O}$.

\begin{thmd}
    Let $P$ be a self-dual poset.  Let $f\in\Inc^q(P)$ and let $\mathcal{O}$ be the promotion orbit of $f$ in $\Inc^q(P)$.  Let $r$ be the largest label in $\overline{f}$ and $\overline{\mathcal{O}}$ be the orbit of $\overline{f}\in\Inc^r(P)$.  Suppose the following hold:
    \begin{itemize}
        \item $\rho^i(\Con(f))=\rev(\Con(f))$ for some $i\in[q]$
        \item $|\mathcal{O}|=|\overline{\mathcal{O}}| \ell$ where $\ell$ is the period of $\Con(f)$.
        \item $\overline{\mathcal{O}}$ is swap-closed.
    \end{itemize}
    Then the orbit $\mathcal{O}$ is orbitmesic with respect to the antipodal sum statistic $\as_x$ for all $x\in P$ and the total sum statistic $\tot$.

\end{thmd}

We combine all of these tools to classify orbitmesic promotion orbits of $\Inc^q(\mathcal{Z}_4)$, with respect to the antipodal sum statistic.
Surprisingly, even though the antipodal sum statistic is not homomesic, the total sum statistic is homomesic.

\begin{thme}
Promotion on $\Inc^q(\calZ_4)$ is $2(q+1)$-mesic with respect to the total sum statistic.
\end{thme}

This paper is organized as follows.
In \Cref{sec:Orbitmesy}, we provide examples of orbitmesy and describe
a relation between orbitmesy and homomesy,
which is a component of the proof of \Cref{thm:swap_orbitmesy}. 
In \Cref{sec:PromotionBasics}, we define increasing labelings, promotion, and deflation and prove~\Cref{thm:OrderComparedToDeflation}.
In \Cref{sec:lin_growth} we explicitly compute the order of promotion on increasing labelings of $\mathcal{Z}_n$ when $n=3,4,$ and $5$.
In \Cref{sec:swap} we analyze the relationship between swap and promotion.
In \Cref{sec:deflation}, we describe when orbitmesy of a deflated orbit $\overline{\mathcal{O}}$ can be lifted to $\mathcal{O}$.
In \Cref{sec:Z4}, we prove \Cref{thm:near-homomesy}, classifying all orbitmesic orbits of $\mathcal{Z}_4$ with respect to the antipodal sum statistic, and \Cref{Z4_totalsum}, giving a total sum homomesy.
We conclude with a section on future directions.


\section{Orbitmesy}\label{sec:Orbitmesy}
In this short section, we give two initial examples of orbitmesy (see \Cref{def:orbitmesy}) and prove \Cref{prop:homomesy-to-orbitmesy} relating homomesic and orbitmesic actions. Then starting in the next section through the end of the paper, we study promotion on increasing labelings, which is our main application of orbitmesy and the context in which we isolated this phenomenon.

\begin{example} Consider the reverse action $\mathcal{R}:S_n\rightarrow S_n$ which acts on a permutation $\pi$ as $\mathcal{R}(\pi_1\pi_2\ldots\pi_n)=\pi_n\ldots\pi_2\pi_1$; for example, $\mathcal{R}(316524)=425613$. 
Consider the statistic given by the first entry of the permutation. The global average of this statistic is $\frac{n+1}{2}$, since this is the average of the numbers $1$ through $n$ and each number is equally likely to be in the first entry. Thus the $\mathcal{R}$-orbit of $\pi\in S_n$ is orbitmesic with respect to the first entry statistic if and only if the first and last entries of $\pi$ sum to $n+1$, as in the case of the example orbit.
\end{example}

We now move to an example of orbitmesy involving rowmotion, which is a fundamental action in dynamical algebraic combinatorics (see e.g.~\cite{SW2012,Striker2017} for further information). 
\begin{example}
Let $P$ be a self-dual poset and fix an order-reversing involution $\kappa:P\rightarrow P$. An \emph{order ideal} of $P$ is a subset $I$ of $P$ such that if $y\in I$ and $x<y$ then $x\in I$; the set of order ideals of $P$ is denoted $J(P)$. The \emph{dual order ideal} of $I$ (called the \emph{ideal complement} in \cite[p.4]{elizalde2021rowmotion}) is the image of its complement $P\setminus I$ under $\kappa$. 
The \emph{rowmotion} of $I\in J(P)$ is the order ideal generated by the minimal elements of $P\setminus I$.  We call a rowmotion orbit $\mathcal{O}$  \emph{self-dual} if for all $I\in \mathcal{O}$, the dual order ideal of $I$ is also in $\mathcal{O}$. The following is a restatement of \cite[Corollary 3.3(a)]{elizalde2021rowmotion} in the language of orbitmesy:
    Given a self-dual poset, 
    any self-dual  rowmotion orbit exhibits orbitmesy  with respect to the cardinality statistic. 
\end{example}

Note that in the above, the map from an order ideal to its dual order ideal is clearly homomesic with respect to the cardinality statistic.
This is an example of the following general phenomenon which we describe explicitly in the next proposition. For a fixed statistic (such as cardinality), if one action, $\phi$, (such as order ideal dual) is homomesic, this can give rise to orbitmesies for another action, $\psi$ (such as rowmotion).
One must simply check that a $\psi$ orbit is ``covered'' by $\phi$-orbits.  More precisely, given $\phi,\psi:X\rightarrow X$, an orbit $\mathcal{O}$ of $\psi$ is \emph{$\phi$-closed} if for all $x\in \mathcal{O}$, $\phi(x)\in\mathcal{O}$.

\begin{proposition}\label{prop:homomesy-to-orbitmesy}
Let $X$ be a finite set and $\eta:X\to \mathbb{Z}$ be a statistic on $X$.
Suppose $\phi$ and $\psi$ are bijective actions on $X$.  If $(X,\eta, \phi)$ exhibits homomesy, then any orbit of $\psi$ which is $\phi$-closed exhibits orbitmesy with respect to $\eta$.
\end{proposition}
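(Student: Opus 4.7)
The plan is to exploit the fact that a $\phi$-closed subset of $X$ decomposes as a disjoint union of entire $\phi$-orbits, and then apply the homomesy hypothesis orbit-by-orbit.

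First I would set $\mu := \frac{1}{|X|}\sum_{x\in X}\eta(x)$ for the global average, which by the homomesy hypothesis coincides with $\frac{1}{|\mathcal{O}'|}\sum_{x\in\mathcal{O}'}\eta(x)$ for every $\phi$-orbit $\mathcal{O}'$ in $X$. Next, I would observe that since $X$ is finite and $\phi$ is a bijection, the condition $\phi(\mathcal{O})\subseteq\mathcal{O}$ actually forces $\phi(\mathcal{O})=\mathcal{O}$, so $\phi$ restricts to a bijection on $\mathcal{O}$. Consequently, $\mathcal{O}$ is a union of $\phi$-orbits; write $\mathcal{O}=\mathcal{O}'_1\sqcup\cdots\sqcup\mathcal{O}'_k$ where each $\mathcal{O}'_i$ is a full $\phi$-orbit of $X$ contained in $\mathcal{O}$.

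Then I would compute directly:
\[
\sum_{x\in\mathcal{O}}\eta(x)=\sum_{i=1}^{k}\sum_{x\in\mathcal{O}'_i}\eta(x)=\sum_{i=1}^{k}|\mathcal{O}'_i|\,\mu=|\mathcal{O}|\,\mu,
\]
using homomesy in the middle equality and the disjoint union in the last. Dividing by $|\mathcal{O}|$ yields exactly the orbitmesy condition of \Cref{def:orbitmesy}.

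There is no real obstacle here: the argument is essentially a one-line accounting, and the only subtlety worth flagging is the standard finite-set fact that a set-theoretic inclusion $\phi(\mathcal{O})\subseteq\mathcal{O}$ under a bijection of a finite ambient set promotes to equality, so that $\mathcal{O}$ genuinely partitions into full $\phi$-orbits rather than being merely forward-invariant. Once that is noted, the rest of the proof is just linearity of the sum.
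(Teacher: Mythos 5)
Your proof is correct and follows essentially the same route as the paper's: partition the $\phi$-closed $\psi$-orbit into full $\phi$-orbits and apply the homomesy hypothesis to each piece. The only addition is your explicit remark that forward-invariance under a bijection of a finite set forces genuine invariance, a small point the paper leaves implicit.
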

\begin{proof}
Suppose that the global average of the $\eta$-statistic on $X$ is equal to $n$.
Then for every $\phi$-orbit $\mathcal{O}$ we have $\sum_{x\in\mathcal{O}} \eta(x) = n |\mathcal{O}|$.
Suppose that $\mathcal{O}_{\psi}$ is an orbit of the $\psi$ action which is closed under the action of $\phi.$
This implies that $\mathcal{O}_{\psi}$ can be partitioned by the orbits of $\phi$.
That is, $\mathcal{O}_{\psi}=\bigsqcup_{i=1}^k\mathcal{O}_{\phi, i}$ where each $\mathcal{O}_{\phi, i}$ is an orbit of $\phi$.  Then we have 
\[\sum_{x\in \mathcal{O}_{\psi}} \eta(x) = \sum_{i=1}^k \left(\sum_{x\in \mathcal{O}_{\phi,i}} \eta(x)\right) = \sum_{i=1}^k n |\mathcal{O}_{\phi,i}| = n |\mathcal{O}_{\psi}|.\]
Thus the average value of $\eta$ on $\mathcal{O}_{\psi}$ is equal to the global average.
\end{proof}

We later use this proposition to prove an orbitmesy involving promotion on increasing labelings of self-dual posets (see \Cref{thm:swap_orbitmesy}).

\section{Increasing labeling promotion and deflation}\label{sec:PromotionBasics}

We developed the idea of orbitmesy when investigating the action of increasing labeling promotion.  The rest of the paper is dedicated to exploring this action  and several of its orbitmesies.

Let $P$ be a finite poset and $\mathbb{N}$ denote the set of positive integers.
\begin{definition}\label{def:IncrLabeling}
We say $f: P \to \mathbb{N}$ is an \emph{increasing labeling} of $P$ if whenever $x, y \in P$, $x < y$, we have $f(x) < f(y)$. Denote  the set of all increasing labelings of $P$ with values in $[q]$ by  $\Inc^q(P)$.
\end{definition}

\begin{example}\label{ex:FirstExOfIncrLabeling}
Below on the left we draw a poset $P$ and on the right by we depict an increasing labeling, $f$, with values in $[9]$ by writing $f(x)$ at each element $x \in P$.  

\begin{center}
\begin{tabular}{cc}
\begin{tikzpicture}[scale=1.2]
\node(a) at (0.5,0){$a$};
\node(b) at (1.5,0){$b$};
\node(c) at (2.5,0){$c$};
\node(d) at (0,1){$d$};
\node(e) at (1,1){$e$};
\node(f) at (2,1){$f$};
\node(g) at (3,1){$g$};
\node(h) at (0.5,2){$h$};
\node(i) at (1.5,2){$i$};
\node(j) at (2.5,2){$j$};
\draw(a) -- (d);
\draw(a) -- (f);
\draw(b) -- (d);
\draw(b) -- (e);
\draw(b) -- (f);
\draw(b) -- (g);
\draw(c) -- (e);
\draw(c) -- (g);
\draw(d) -- (h);
\draw(e) -- (h);
\draw(f) -- (j);
\draw(g) -- (j);
\draw(d) -- (i);
\draw(e) -- (i);
\draw(f) -- (i);
\draw(g) -- (i);
\end{tikzpicture}   
&
\begin{tikzpicture}[scale=1.2]
\node(a) at (0.5,0){1};
\node(b) at (1.5,0){1};
\node(c) at (2.5,0){2};
\node(d) at (0,1){4};
\node(e) at (1,1){6};
\node(f) at (2,1){4};
\node(g) at (3,1){3};
\node(h) at (0.5,2){8};
\node(i) at (1.5,2){9};
\node(j) at (2.5,2){8};
\draw(a) -- (d);
\draw(a) -- (f);
\draw(b) -- (d);
\draw(b) -- (e);
\draw(b) -- (f);
\draw(b) -- (g);
\draw(c) -- (e);
\draw(c) -- (g);
\draw(d) -- (h);
\draw(e) -- (h);
\draw(f) -- (j);
\draw(g) -- (j);
\draw(d) -- (i);
\draw(e) -- (i);
\draw(f) -- (i);
\draw(g) -- (i);
\end{tikzpicture}  \\
\end{tabular}
\end{center}
\end{example}

Increasing labelings are a generalization of increasing tableaux, which are themselves generalizations of standard Young tableaux. Promotion is a well-studied action on tableaux. Following \cite{dilks2019rowmotion}, we study promotion on increasing labelings.

\begin{definition}[\protect{\cite[Def.\ 3.3]{dilks2019rowmotion}}]
\label{def:ProJDT}
Let $\mathbb{N}_{\boxempty}(P)$ denote the set of labelings $g:P\rightarrow(\mathbb{N}\cup\boxempty)$.
Define the $i$th \emph{jeu de taquin slide} $\sigma_i:\mathbb{N}_{\boxempty}(P)\rightarrow \mathbb{N}_{\boxempty}(P)$ 
as follows: 

\[\sigma_i(g)(x)=\begin{cases}
i &  g(x)=\boxempty \mbox{ and } g(y)=i \mbox{ for some } y\gtrdot x \\
\boxempty &  g(x)=i \mbox{ and } g(z)=\boxempty \mbox{ for some } z\lessdot x\\
g(x) & \mbox{otherwise.}
\end{cases}\]
That is, $\sigma_i(g)(x)$ replaces a label $\boxempty$ with $i$ if $i$ is the label of a cover of $x$, replaces a label $i$ by $\boxempty$ if $x$ covers an element labeled by $\boxempty$, and leaves all other labels unchanged.

Let $\sigma_{i\rightarrow j}:\mathbb{N}_{\boxempty}(P)\rightarrow \mathbb{N}_{\boxempty}(P)$ be defined as \[\sigma_{i\rightarrow j}(g)(x)=\begin{cases} j & g(x)=i\\  g(x) &\mbox{otherwise}.\end{cases}\]
That is, $\sigma_{i\rightarrow j}(g)(x)$ replaces all labels $i$ by $j$. 

For $f\in\Inc^q(P)$, let $\jdt(f)=\sigma_{\boxempty\rightarrow (q+1)}\circ\sigma_{q}\circ\sigma_{q-1}\circ\cdots\circ\sigma_{3}\circ\sigma_2\circ\sigma_{1\rightarrow \boxempty}(f)$. That is, first replace all $1$ labels by $\boxempty$. Then perform the $i$th jeu de taquin slide $\sigma_i$ for all $2\leq i\leq q$. Next, replace all labels $\boxempty$ by $q+1$. 
Define \emph{jeu de taquin promotion} on $f$ as $\Pro(f)(x)=\jdt(f)(x)-1$.
\end{definition}

\begin{example}\label{ex:Pro}
Here we demonstrate \Cref{def:ProJDT} by computing the increasing labeling $\Pro(f)$ for the increasing labeling $f$ from \Cref{ex:FirstExOfIncrLabeling}.

\begin{center}
\begin{tikzpicture}[scale=0.88]
\node(a1) at (0.5,0){1};
\node(b1) at (1.5,0){1};
\node(c1) at (2.5,0){2};
\node(d1) at (0,1){4};
\node(e1) at (1,1){6};
\node(f1) at (2,1){4};
\node(g1) at (3,1){3};
\node(h1) at (0.5,2){8};
\node(i1) at (1.5,2){9};
\node(j1) at (2.5,2){8};
\draw(a1) -- (d1);
\draw(a1) -- (f1);
\draw(b1) -- (d1);
\draw(b1) -- (e1);
\draw(b1) -- (f1);
\draw(b1) -- (g1);
\draw(c1) -- (e1);
\draw(c1) -- (g1);
\draw(d1) -- (h1);
\draw(e1) -- (h1);
\draw(f1) -- (j1);
\draw(g1) -- (j1);
\draw(d1) -- (i1);
\draw(e1) -- (i1);
\draw(f1) -- (i1);
\draw(g1) -- (i1);
\node[] at (4,1){$\xrightarrow[]{\sigma_{1\rightarrow \boxempty}}$};
\node(a2) at (5.5,0){$\boxempty$};
\node(b2) at (6.5,0){$\boxempty$};
\node(c2) at (7.5,0){2};
\node(d2) at (5,1){4};
\node(e2) at (6,1){6};
\node(f2) at (7,1){4};
\node(g2) at (8,1){3};
\node(h2) at (5.5,2){8};
\node(i2) at (6.5,2){9};
\node(j2) at (7.5,2){8};
\draw(a2) -- (d2);
\draw(a2) -- (f2);
\draw(b2) -- (d2);
\draw(b2) -- (e2);
\draw(b2) -- (f2);
\draw(b2) -- (g2);
\draw(c2) -- (e2);
\draw(c2) -- (g2);
\draw(d2) -- (h2);
\draw(e2) -- (h2);
\draw(f2) -- (j2);
\draw(g2) -- (j2);
\draw(d2) -- (i2);
\draw(e2) -- (i2);
\draw(f2) -- (i2);
\draw(g2) -- (i2);
\node[] at (9,1){$\xrightarrow[]{\sigma_{3}\circ \sigma_2}$};
\node(a3) at (10.5,0){$\boxempty$};
\node(b3) at (11.5,0){3};
\node(c3) at (12.5,0){2};
\node(d3) at (10,1){4};
\node(e3) at (11,1){6};
\node(f3) at (12,1){4};
\node(g3) at (13,1){$\boxempty$};
\node(h3) at (10.5,2){8};
\node(i3) at (11.5,2){9};
\node(j3) at (12.5,2){8};
\draw(a3) -- (d3);
\draw(a3) -- (f3);
\draw(b3) -- (d3);
\draw(b3) -- (e3);
\draw(b3) -- (f3);
\draw(b3) -- (g3);
\draw(c3) -- (e3);
\draw(c3) -- (g3);
\draw(d3) -- (h3);
\draw(e3) -- (h3);
\draw(f3) -- (j3);
\draw(g3) -- (j3);
\draw(d3) -- (i3);
\draw(e3) -- (i3);
\draw(f3) -- (i3);
\draw(g3) -- (i3);
\node[] at (14,1){$\xrightarrow[]{\sigma_{4}}$};
\node(a4) at (15.5,0){$4$};
\node(b4) at (16.5,0){$3$};
\node(c4) at (17.5,0){2};
\node(d4) at (15,1){$\boxempty$};
\node(e4) at (16,1){6};
\node(f4) at (17,1){$\boxempty$};
\node(g4) at (18,1){$\boxempty$};
\node(h4) at (15.5,2){8};
\node(i4) at (16.5,2){9};
\node(j4) at (17.5,2){8};
\draw(a4) -- (d4);
\draw(a4) -- (f4);
\draw(b4) -- (d4);
\draw(b4) -- (e4);
\draw(b4) -- (f4);
\draw(b4) -- (g4);
\draw(c4) -- (e4);
\draw(c4) -- (g4);
\draw(d4) -- (h4);
\draw(e4) -- (h4);
\draw(f4) -- (j4);
\draw(g4) -- (j4);
\draw(d4) -- (i4);
\draw(e4) -- (i4);
\draw(f4) -- (i4);
\draw(g4) -- (i4);
\draw[->](16.5,-0.75) -- (16.5,-1.25);
\node[left,scale=0.85] at (16.5,-1){$\sigma_8 \circ \sigma_7 \circ \sigma_6 \circ \sigma_5$};
\node(a5) at (0.5,-4){3};
\node(b5) at (1.5,-4){2};
\node(c5) at (2.5,-4){1};
\node(d5) at (0,-3){7};
\node(e5) at (1,-3){5};
\node(f5) at (2,-3){7};
\node(g5) at (3,-3){7};
\node(h5) at (0.5,-2){9};
\node(i5) at (1.5,-2){8};
\node(j5) at (2.5,-2){9};
\draw(a5) -- (d5);
\draw(a5) -- (f5);
\draw(b5) -- (d5);
\draw(b5) -- (e5);
\draw(b5) -- (f5);
\draw(b5) -- (g5);
\draw(c5) -- (e5);
\draw(c5) -- (g5);
\draw(d5) -- (h5);
\draw(e5) -- (h5);
\draw(f5) -- (j5);
\draw(g5) -- (j5);
\draw(d5) -- (i5);
\draw(e5) -- (i5);
\draw(f5) -- (i5);
\draw(g5) -- (i5);
\node[] at (3+2.25,-3){$\xleftarrow[]{\text{subtract 1}}$};
\node(a7) at (8,-4){4};
\node(b7) at (9,-4){3};
\node(c7) at (10,-4){2};
\node(d7) at (7.5,-3){8};
\node(e7) at (8.5,-3){6};
\node(f7) at (9.5,-3){8};
\node(g7) at (10.5,-3){8};
\node(h7) at (8,-2){10};
\node(i7) at (9,-2){9};
\node(j7) at (10,-2){10};
\draw(a7) -- (d7);
\draw(a7) -- (f7);
\draw(b7) -- (d7);
\draw(b7) -- (e7);
\draw(b7) -- (f7);
\draw(b7) -- (g7);
\draw(c7) -- (e7);
\draw(c7) -- (g7);
\draw(d7) -- (h7);
\draw(e7) -- (h7);
\draw(f7) -- (j7);
\draw(g7) -- (j7);
\draw(d7) -- (i7);
\draw(e7) -- (i7);
\draw(f7) -- (i7);
\draw(g7) -- (i7);
\node[] at (13,-3){$\xleftarrow[]{\sigma_{\boxempty \rightarrow 10}\circ \sigma_9}$};
\node(a8) at (15.5,-4){$4$};
\node(b8) at (16.5,-4){$3$};
\node(c8) at (17.5,-4){2};
\node(d8) at (15,-3){8};
\node(e8) at (16,-3){6};
\node(f8) at (17,-3){8};
\node(g8) at (18,-3){8};
\node(h8) at (15.5,-2){$\boxempty$};
\node(i8) at (16.5,-2){9};
\node(j8) at (17.5,-2){$\boxempty$};
\draw(a8) -- (d8);
\draw(a8) -- (f8);
\draw(b8) -- (d8);
\draw(b8) -- (e8);
\draw(b8) -- (f8);
\draw(b8) -- (g8);
\draw(c8) -- (e8);
\draw(c8) -- (g8);
\draw(d8) -- (h8);
\draw(e8) -- (h8);
\draw(f8) -- (j8);
\draw(g8) -- (j8);
\draw(d8) -- (i8);
\draw(e8) -- (i8);
\draw(f8) -- (i8);
\draw(g8) -- (i8);
\end{tikzpicture}  
\end{center}
\end{example}

It follows from the definition that for any $f\in\Inc^q(P)$, $\Pro(f)
\in\Inc^q(P)$; see \cite[Prop.\ 3.4]{dilks2019rowmotion}. This paper also gives another definition of promotion on $\Inc^q(P)$ in terms of \emph{Bender-Knuth involutions} (see \cite[Prop.\ 3.1]{dilks2019rowmotion}) and shows these two definitions agree. We will not need this alternate definition, but remark that knowing both characterizations of promotion often proves useful.

\Cref{def:ProJDT} invites the following.

\begin{definition}[\protect{\cite[Def.\ 3.6]{dilks2019rowmotion}}]
\label{def:SlidingSubPoset}
Given $f \in \Inc^q(P)$, let $S(f)$ be the induced subposet of $P$ consisting of elements which are labeled with $\boxempty$ at some point in the process of promotion. We call $S(f)$ the \emph{sliding subposet} of $f$.
\end{definition}

 The following is a new characterization of the sliding subposet will be useful in later proofs. 

\begin{lemma}\label{lem:AlternateDescriptionSliding}
Given $f \in \Inc^q(P)$, define $S_1(f) = \{x \in P: f(x) = 1\}$ and for $i > 1$, recursively define $S_i(f) = \{x \in P: \exists~ y \in S_1(f) \cup \cdots \cup S_{i-1}(f) \text{ such that } y \lessdot x \text{ and } f(x) = \min_{z \gtrdot y}\{f(z)\}\}$. Then $S(f)$ is the induced subposet of $P$ on $\bigcup_{i=1}^q S_i(f)$.
\end{lemma}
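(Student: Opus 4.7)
The plan is to prove both containments after first establishing the invariant that \emph{each element $x \in P$ acquires the label $\boxempty$ at most once during promotion, and this can only happen at the slide $\sigma_{f(x)}$} (or at the preliminary replacement $\sigma_{1 \to \boxempty}$ if $f(x) = 1$). Prior to $\sigma_{f(x)}$, no slide $\sigma_j$ with $j < f(x)$ can change $x$'s label: the first case of \Cref{def:ProJDT} needs the current label to be $\boxempty$ and the second needs it to equal $j$, neither of which holds. After $\sigma_{f(x)}$, the label of $x$ can only reach a new non-$\boxempty$ value $j$ during $\sigma_j$ itself (via the first case of the slide), so its label is never equal to $j$ at the \emph{start} of $\sigma_j$, ruling out any subsequent $\boxempty$.

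Next I will pinpoint the exact criterion for $x$ (with $f(x) > 1$) to become $\boxempty$. By the invariant, the label of $x$ at the start of $\sigma_{f(x)}$ is still $f(x)$, so the second case of the slide makes $x$ become $\boxempty$ if and only if some $y \lessdot x$ is labeled $\boxempty$ at that moment. I will trace the life of the $\boxempty$ at such a $y$: it first appears at $\sigma_{f(y)}$, and throughout the interval of slides strictly before $\sigma_{m_y}$ (where $m_y := \min_{z \gtrdot y} f(z)$) every cover $z$ of $y$ retains its original label $f(z)$. Indeed, by the invariant applied to $z$, its own first $\boxempty$ cannot occur before $\sigma_{f(z)}$, and $f(z) \ge m_y$ for every such $z$, so no $\sigma_i$ with $i < m_y$ can match a cover-label of $y$ and move the $\boxempty$. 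Hence $y$ still carries $\boxempty$ at the start of $\sigma_{f(x)}$ iff $f(x) \le m_y$; combined with the trivial bound $f(x) \ge m_y$ (since $x$ is itself a cover of $y$), this forces $f(x) = \min_{z \gtrdot y} f(z)$.

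Putting these two steps together yields the recursive description: $x \in S(f)$ if and only if $f(x) = 1$ or there exists $y \lessdot x$ with $y \in S(f)$ and $f(x) = \min_{z \gtrdot y} f(z)$. I will finish by observing that $\bigcup_{i=1}^{q} S_i(f)$ is precisely the smallest subset of $P$ closed under these two rules: $S_1(f)$ realizes the base case, and passing from $\bigcup_{j < i} S_j(f)$ to $\bigcup_{j \le i} S_j(f)$ adds exactly those covers attaining the minimum cover-label of a previously included element. Since $S(f)$ is also the smallest such closed set, the two coincide.

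The main obstacle is the cover-label tracking in the second step: one must verify that the labels of the covers of $y$ genuinely remain at their original $f$-values throughout the interval $[\sigma_{f(y)}, \sigma_{m_y})$, ruling out any interference from $\boxempty$'s migrating through unrelated parts of $P$. The one-$\boxempty$-per-element invariant from the first step is what makes this clean: every label change of a cover $z$ is pinned to the single slide $\sigma_{f(z)}$, so potential interactions with other sliding boxes can be analyzed locally and are excluded by the inequality $f(z) \ge m_y$.
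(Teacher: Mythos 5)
Your proof is correct and rests on the same core mechanism as the paper's: a box slides from $y$ up to $x$ precisely when $f(x)=\min_{z\gtrdot y}f(z)$, and the set $S(f)$ is then built up recursively from the elements labeled $1$. The paper handles this by induction on the length of saturated chains in $S(f)$ (asserting the sliding criterion in one line and omitting the reverse containment as ``similar''), whereas you first establish the one-box-per-element invariant and the exact lifetime of each $\boxempty$, which yields the ``if and only if'' characterization and hence both containments at once --- a more careful write-up of essentially the same argument.
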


\begin{proof}
Set $S'(f) = \bigcup_{i=1}^q S_i(f)$. First, we show $S(f) \subseteq S'(f)$. Let $x \in S(f)$ and let $k$ be the number of elements in the longest saturated chain contained in $S(f)$ which contains both $x$ and a minimal element of $S(f)$. The definition of jeu de taquin promotion implies such a chain will exist. 

If $k = 1$, then $x$ is a minimal element in $P$. Then, in order for $x$ to be in $S(f)$, it must be that $f(x) = 1$, in which case $x \in S_1(f)$.

Now, suppose we have shown the claim for all $1 \leq j < k$, and consider $x \in S(f)$ such that all saturated chains consisting of elements of $S(f)$, with minimal element being minimal in $P$ and with $x$ as a  maximal element, have length at most $k$. The fact that $x \in S(f)$ means that at some point in the process of promotion, a box slides from some $y \lessdot x$, $y \in S(f)$ to $x$. Since we perform the jeu de taquin slides in increasing order, the fact that a box slides up from some $y \lessdot x$ to $x$ means that $f(x) = \min_{z \gtrdot y} f(z)$. Therefore, since $y \in S'(f)$ by the inductive hypothesis, $x \in S'(f)$ by definition.  

We can similarly show $S'(f) \subseteq S(f)$, so that we conclude these two sets are equal. 
\end{proof}

\begin{example}
We draw the sliding subposet $S(f)$ for the increasing labeling $f$ given in \Cref{ex:Pro}; elements and cover relations in $S(f)$ are drawn in bold black and other elements and cover relations are drawn in dotted gray.

\begin{center}
\begin{tikzpicture}[scale=1.2]
\node(a) at (0.5,0){$\mathbf{1}$};
\node(b) at (1.5,0){$\mathbf{1}$};
\node(c) at (2.5,0){2};
\node(d) at (0,1){$\mathbf{4}$};
\node(e) at (1,1){6};
\node(f) at (2,1){$\mathbf{4}$};
\node(g) at (3,1){$\mathbf{3}$};
\node(h) at (0.5,2){$\mathbf{8}$};
\node(i) at (1.5,2){9};
\node(j) at (2.5,2){$\mathbf{8}$};
\draw[thick](a) -- (d);
\draw[thick](a) -- (f);
\draw[thick](b) -- (d);
\draw[gray,dashed](b) -- (e);
\draw[thick](b) -- (f);
\draw[thick](b) -- (g);
\draw[gray,dashed](c) -- (e);
\draw[gray,dashed](c) -- (g);
\draw[thick](d) -- (h);
\draw[gray,dashed](e) -- (h);
\draw[thick](f) -- (j);
\draw[thick](g) -- (j);
\draw[gray,dashed](d) -- (i);
\draw[gray,dashed](e) -- (i);
\draw[gray,dashed](f) -- (i);
\draw[gray,dashed](g) -- (i);
\end{tikzpicture}  
\end{center}

In the notation of \Cref{ex:FirstExOfIncrLabeling}, the layers of the sliding subposet are \[
S_1(f) = \{a,b\} \qquad S_2(f) = \{d,f,g\} \qquad S_3(f) = \{h,j\}.
\]
\end{example}

Increasing labelings under promotion exhibit a dynamical phenomenon called \emph{resonance} (see \cite{dilks2017resonance}), in which there is a map to another set such that promotion corresponds to rotation. That map is the binary content of the labeling, defined below.

\begin{definition}
Given $f \in \Inc^q(P)$, let the \emph{binary content word}, $\Con(f)$, be defined by $ (c_1,c_2,\ldots,c_q)$ where $c_i = 1$ if there exists $x \in P$ with $f(x) = i$ and otherwise $c_i = 0$.
\end{definition}

See \Cref{ex:ContentRotation} for examples of binary content words. 

\smallskip
Define the cyclic shift map $\rho:\mathbb{R}^k\rightarrow \mathbb{R}^k$ as $\rho((v_1, v_2, \ldots, v_k))=(v_2, v_3, \ldots, v_{k},v_1)$.

\begin{lemma}[\protect{\cite[Lemma 3.9]{dilks2019rowmotion}}]\label{lem:ProRotateContent}
If $f \in \Inc^q(P)$ then $\Con(\Pro(f))=\rho(\Con(f))$.
\end{lemma}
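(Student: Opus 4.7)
My plan is to track which labels appear after each stage of promotion. Let $g_1 = \sigma_{1 \to \boxempty}(f)$, $g_i = \sigma_i(g_{i-1})$ for $2 \leq i \leq q$, and $g_{q+1} = \sigma_{\boxempty \to q+1}(g_q) = \jdt(f)$. A first observation from \Cref{def:ProJDT} is that each slide $\sigma_i$ only modifies elements currently labeled $i$ or $\boxempty$; every other label is preserved. Consequently, for a fixed $k \in \{2,\ldots,q\}$, the only slide in the composition $\sigma_q \circ \cdots \circ \sigma_2$ that can change which elements carry label $k$ is $\sigma_k$, so it suffices to compare $g_{k-1}$ and $g_k$.

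The key lemma I would prove is a persistence statement: label $i$ appears in $g_i$ if and only if it appears in $g_{i-1}$, and the analogous statement holds for $\boxempty$. The ``only if'' direction for label $i$ is immediate, since if no $i$ sits in $g_{i-1}$ then no $\boxempty$ has an $i$-labeled cover and no $i$ is available to convert, so $\sigma_i$ acts as the identity on the positions that matter. For ``if'', suppose $g_{i-1}$ contains an $i$: either some $i$ at $y$ has no $\boxempty$ below it and thus keeps its label, or every $i$ at $y$ sits above a $\boxempty$ at some $z \lessdot y$; in the latter case, by the first clause in the definition of $\sigma_i$, each such $z$ becomes $i$. The argument for $\boxempty$ is parallel: if every $\boxempty$-labeled $x$ gets upgraded to $i$, this requires a cover $y \gtrdot x$ labeled $i$, and then $y$ has a $\boxempty$ below it and is itself demoted to $\boxempty$ by $\sigma_i$.

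Iterating these two claims yields: for $k \in \{2,\ldots,q\}$, label $k$ appears in $\jdt(f)$ if and only if it appears in $f$; and label $q+1$ appears in $\jdt(f)$ if and only if $\boxempty$ appears in $g_q$, which by $\boxempty$-persistence is equivalent to $\boxempty$ appearing in $g_1$, i.e.\ to label $1$ appearing in $f$. Since $\Pro(f) = \jdt(f) - 1$, label $k \in \{1,\ldots,q\}$ appears in $\Pro(f)$ if and only if label $k+1$ appears in $\jdt(f)$. Combining these yields $\Con(\Pro(f))_k = \Con(f)_{k+1}$ for $1 \leq k \leq q-1$ and $\Con(\Pro(f))_q = \Con(f)_1$, which is precisely $\rho(\Con(f))$.

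The main obstacle is the persistence argument itself. The case analysis is short, but it must be set up carefully, using both clauses of the definition of $\sigma_i$ in tandem: one has to observe that any ``complete swap'' eliminating every $i$ (respectively every $\boxempty$) at one site is automatically compensated by the creation of an $i$ (respectively a $\boxempty$) at a neighboring site, so neither labels $i$ nor $\boxempty$ can collectively vanish during $\sigma_i$. Everything else is a direct unpacking of the definitions of $\jdt$, $\Pro$, and the cyclic shift $\rho$.
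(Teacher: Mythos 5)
Your proof is correct. Note, however, that the paper does not prove this lemma at all: it is imported verbatim as \cite[Lemma~3.9]{dilks2019rowmotion}, so there is no internal proof to compare against. What you have written is a legitimate self-contained verification from \Cref{def:ProJDT}. The reduction to a single slide per label (since $\sigma_j$ only touches labels $j$ and $\boxempty$, the presence of label $k$ can only change at stage $\sigma_k$) is the right decomposition, and your persistence claim is the genuine content: if every $i$-labeled $y$ in $g_{i-1}$ covers a $\boxempty$, then each such $\boxempty$ acquires label $i$ by the first clause of $\sigma_i$, and symmetrically if every $\boxempty$-labeled $x$ is promoted to $i$, the witnessing cover $y$ is demoted to $\boxempty$ by the second clause — so neither $i$ nor $\boxempty$ can collectively disappear. (The two clauses never conflict at a single element, since $g(x)$ cannot equal both $i$ and $\boxempty$.) The endgame is also handled correctly: label $1$ is destroyed by $\sigma_{1\to\boxempty}$ and never reintroduced, $\boxempty$ survives to $g_q$ exactly when $f$ contains a $1$, and the final $\sigma_{\boxempty\to q+1}$ together with the shift by $-1$ converts these statements into $\Con(\Pro(f))_k=\Con(f)_{k+1}$ for $k<q$ and $\Con(\Pro(f))_q=\Con(f)_1$, i.e.\ $\rho(\Con(f))$. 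This is in the same spirit as the argument in the cited source, and nothing in it is specific to zig-zag posets, so it applies at the level of generality the paper needs.
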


\begin{example}\label{ex:ContentRotation}
For an example of \Cref{lem:ProRotateContent}, notice that the content word of the labeling $f$ in \Cref{ex:FirstExOfIncrLabeling}  is $(1,1,1,1,0,1,0,1,1)$ and the content word of $\Pro(f)$, as in \Cref{ex:Pro}, is $(1,1,1,0,1,0,1,1,1)$.
\end{example}

We define the \emph{period} of a word to be the minimal number $\ell$ such that $\ell$ cyclic shifts of the word returns the same word. For example, the period of 0101 is 2 and the period of 0001 is~4. 

Now we look at a second piece of information which can be extracted from an increasing labeling and which has a nice relationship with promotion.

\begin{definition}
\label{def:deflation} 
Given $f \in \Inc^q(P)$, let   $\{i_1, i_2, \ldots, i_r\}$ be such that $i_j < i_k$ for $j < k$ and $f(P) = \{i_1,\ldots,i_r\}$.  We define the \emph{deflation} of $f$, denoted $\overline{f}$, to be the increasing labeling in $\Inc^r(P)$ such that for all $x \in P$, if $f(x) = i_j$, then $\overline{f}(x) = j$.

If $f = \overline{f}$, we will call $f$ \emph{packed}. If $f$ is packed and has values in $[r]$, where this is the smallest such $r$, we may also say $f$ is \emph{$r$-packed}.
\end{definition}

It is straightforward to see that $\overline{f}$ is also an increasing labeling of $P$. An example of deflation is given in \Cref{ex:Deflate}. 

The following lemma shows that the sliding subposets of an increasing labeling and its deflation coincide.
\begin{lemma}
\label{lem:sliding_deflation}
Let $f \in \Inc^q(P)$ be such that $f(x) = 1$ for some $x \in P$. Then, $S(f) = S(\bar{f})$.
\end{lemma}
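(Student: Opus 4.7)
The plan is to apply \Cref{lem:AlternateDescriptionSliding} and show, by induction on the layer index $i$, that $S_i(f) = S_i(\overline{f})$ for every $i$. This will give $S(f) = S(\overline{f})$ after taking unions, since once a layer becomes empty all subsequent layers are empty, so the mismatch between the upper bounds $q$ (for $f$) and $r$ (for $\overline f$) in the union is harmless.

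The driving observation is that deflation is a strictly order-preserving reindexing of the label set: writing $f(P) = \{i_1 < i_2 < \cdots < i_r\}$ and $\overline{f}(x) = j$ whenever $f(x) = i_j$, we have $f(x) < f(y) \iff \overline{f}(x) < \overline{f}(y)$ and $f(x) = f(y) \iff \overline{f}(x) = \overline{f}(y)$ for all $x,y \in P$. In particular, for any finite subset $T \subseteq P$ and any $x \in T$, we have $f(x) = \min_{z \in T} f(z)$ if and only if $\overline{f}(x) = \min_{z \in T} \overline{f}(z)$.

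For the base case, since by hypothesis there exists some $x \in P$ with $f(x) = 1$, we have $i_1 = 1$, and so $f(x) = 1$ iff $\overline{f}(x) = 1$. Hence $S_1(f) = S_1(\overline{f})$. For the inductive step, suppose $S_j(f) = S_j(\overline{f})$ for all $j < i$. An element $x$ lies in $S_i(f)$ iff there exists $y \in S_1(f) \cup \cdots \cup S_{i-1}(f)$ with $y \lessdot x$ and $f(x) = \min_{z \gtrdot y} f(z)$. By the inductive hypothesis the sets $S_1(f) \cup \cdots \cup S_{i-1}(f)$ and $S_1(\overline{f}) \cup \cdots \cup S_{i-1}(\overline{f})$ coincide, and by the observation above the minimum condition is equivalent under $f$ and $\overline f$. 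Therefore $x \in S_i(f)$ iff $x \in S_i(\overline{f})$, completing the induction.

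There is no serious obstacle here; the only subtlety is that the layer indices run up to $q$ for $f$ and up to $r$ for $\overline{f}$, with $r \le q$. But if $S_i(f) = \emptyset$ for some $i$, then $S_j(f) = \emptyset$ for all $j > i$ as well (since each layer is built from the previous ones), and the same holds for $\overline{f}$. So the two unions $\bigcup_{i=1}^q S_i(f)$ and $\bigcup_{i=1}^r S_i(\overline{f})$ are equal, and \Cref{lem:AlternateDescriptionSliding} yields $S(f) = S(\overline{f})$.
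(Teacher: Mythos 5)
Your proof is correct and follows essentially the same route as the paper's: both invoke \Cref{lem:AlternateDescriptionSliding}, establish $S_1(f)=S_1(\overline f)$ from the hypothesis that $1$ is a label of $f$, and then propagate layer by layer using the fact that deflation preserves the relative order of labels. You simply spell out the induction and the bookkeeping about the upper bounds $q$ versus $r$ in more detail than the paper does.
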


\begin{proof}
We use the characterization of the sliding subposet from \Cref{lem:AlternateDescriptionSliding}, namely, $S(f) = \bigcup_{i=1}^q S_i(f)$. First, note that $S_1(f) = S_1(\bar{f})$ since we insist that $f$ labels at least one element of $P$ with a $1$. Then, $S_i(f) = S_i(\bar{f})$, since deflation preserves relative ordering of the elements.    
\end{proof}

The following proposition shows that promotion and deflation interact nicely.  This proposition was proven for miniscule posets (using only the assumption that there is a unique maximal element) in \cite[Proposition 5.1]{mandel2018orbits} (see also \cite[Sec.~3]{Pechenik22}) using the Bender-Knuth definition of promotion; we use a different proof method involving the sliding subposet to prove it for arbitrary posets. 

\begin{proposition}\label{prop:ProRestricts}
Let $f \in \Inc^q(P)$ and $\overline{f}$ be $r$-packed. If the first entry in $\Con(f)$ is~$1$, we have $\overline{\Pro(f)} = \Pro(\overline{f})$. Otherwise, we have $\overline{\Pro(f)} = \overline{f}$.
\end{proposition}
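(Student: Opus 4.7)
The plan is to split into the two cases of the proposition and verify each by tracking the jeu de taquin slides carefully. In the easy case, when the first entry of $\Con(f)$ is $0$, no element of $P$ is labeled $1$; hence $\sigma_{1\to\boxempty}$ acts as the identity on $f$, no boxes are ever introduced, every $\sigma_i$ is therefore trivial, and $\sigma_{\boxempty\to q+1}$ has nothing to relabel. Thus $\Pro(f)(x) = f(x) - 1$ is a uniform shift of all labels by $-1$, and since deflation depends only on the rank of each label among those actually used, we immediately obtain $\overline{\Pro(f)} = \overline{f}$.

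For the main case, when the first entry of $\Con(f)$ is $1$, write the image of $f$ as $\{i_1 < i_2 < \cdots < i_r\}$ with $i_1 = 1$, so $\overline{f}(x) = k$ exactly when $f(x) = i_k$. The key claim I would prove is that the jeu de taquin process on $f$ mirrors the one on $\overline{f}$ under the bijection $i_k \leftrightarrow k$ (with $\boxempty \leftrightarrow \boxempty$). This is established by induction on $k \in \{1, \ldots, r\}$, showing that the state in the computation of $\jdt(f)$ immediately after $\sigma_{i_k}$ corresponds under this bijection to the state in the computation of $\jdt(\overline{f})$ immediately after $\sigma_k$ (with $\sigma_{i_1}$ reinterpreted as $\sigma_{1\to\boxempty}$). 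The base case $k=1$ is immediate: after $\sigma_{1\to\boxempty}$ the box positions on both sides coincide with $S_1(f) = S_1(\overline{f})$ (consistent with \Cref{lem:sliding_deflation}), and elsewhere labels correspond via $i_j \leftrightarrow j$. For the inductive step, each intermediate slide $\sigma_j$ with $j \in (i_k, i_{k+1})$ on the $f$-side is trivial, because at that stage all non-box labels present lie in $\{i_2, \ldots, i_r\}$ and thus avoid $(i_k, i_{k+1})$; the next non-trivial slide is $\sigma_{i_{k+1}}$, which by the inductive hypothesis acts on precisely the same set of positions as $\sigma_{k+1}$ does on the $\overline{f}$-side.

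Once all slides are processed, the trailing slides $\sigma_{i_r+1}, \ldots, \sigma_q$ on the $f$-side are trivial for the same reason, and the final relabelings $\sigma_{\boxempty\to q+1}$ and $\sigma_{\boxempty\to r+1}$ extend the bijection by $q+1 \leftrightarrow r+1$. Thus $\jdt(f)$ and $\jdt(\overline{f})$ take values in $\{i_2, \ldots, i_r, q+1\}$ and $\{2, \ldots, r, r+1\}$ respectively, with $\jdt(f)(x) = i_k$ if and only if $\jdt(\overline{f})(x) = k$ (setting $i_{r+1} := q+1$). Subtracting $1$ and deflating by rank then yields $\overline{\Pro(f)}(x) = \Pro(\overline{f})(x)$ for every $x \in P$.

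The main obstacle will be cleanly justifying the inductive step, specifically the claim that no labels in the open interval $(i_k, i_{k+1})$ ever arise on the $f$-side during the computation. This rests on the observation that each $\sigma_i$ introduces the value $i$ only at positions currently holding a box, and removes the value $i$ only by replacing it with a box; so the labels appearing at any intermediate stage are confined to the image of $f$ together with $\boxempty$, a set that avoids $(i_k, i_{k+1})$ by construction.
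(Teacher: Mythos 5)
Your proof is correct, and it takes a genuinely different route from the paper's. The paper does not re-examine the individual slides at all: it invokes \Cref{lem:sliding_deflation} (that $S(f)=S(\overline{f})$) together with a closed-form description of promotion in terms of the sliding subposet ($\Pro(f)(x)=f(x)-1$ off $S(f)$, $=q$ at maximal elements of $S(f)$, and $=\min_{z\gtrdot x}f(z)-1$ otherwise), and then checks these three cases for each $x\in P$. You instead run the two jeu de taquin computations in parallel and prove by induction on $k$ that the states after $\sigma_{i_k}$ and $\sigma_k$ agree under the relabeling $i_k\leftrightarrow k$, the crux being that $\sigma_j$ is trivial for $j$ outside the image of $f$ because no such label can ever arise mid-computation. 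Your argument is more self-contained---it works directly from \Cref{def:ProJDT} and does not rely on the sliding-subposet formulas, which the paper uses without proof---at the cost of more bookkeeping; the paper's version is shorter because the relevant machinery was already set up in \Cref{lem:AlternateDescriptionSliding,lem:sliding_deflation}. Two small points. First, in your final step (``deflating by rank'') you need not merely that $\jdt(f)$ takes values in $\{i_2,\dots,i_r,q+1\}$ but that it attains all of them, equivalently that $\Pro(\overline{f})$ is packed; otherwise the rank of $i_k-1$ in the image of $\Pro(f)$ need not equal $k-1$. This is immediate from \Cref{lem:ProRotateContent} (or from noting that a slide of label $i$ always deposits a copy of $i$ in the box it fills, so no label can vanish), but it should be stated. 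Second, in the easy case your computation $\Pro(f)(x)=f(x)-1$ is the accurate one---the paper's assertion that $\Pro(f)=f$ there is a slip---and both versions yield $\overline{\Pro(f)}=\overline{f}$.
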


\begin{proof}

    First assume the first entry in $\Con(f)$ is 1. Let the labels used by $f$ be $\{1 = i_1 < i_2 < \cdots < i_r\}$ so that $\overline{f} \in \Inc^r(P)$. From \Cref{lem:ProRotateContent}, we know $\Pro(f)$ has labels $\{i_2-1 < i_3 -1 < \cdots < i_r -1 < q\}$.    
    Pick $x \in P$ and let $f(x) = i_j$ so that $\overline{f}(x) = j$.
    \begin{itemize}
        \item If $x \notin S(f)$, then $\Pro(f)(x) = f(x) - 1$ and $\overline{Pro(f)}(x) = j-1$. Since $S(f) = S(\overline{f})$ by \Cref{lem:sliding_deflation}, $x \notin S(\overline{f})$, and $\Pro(\overline{f})(x) = j-1$ as well.  
        \item If $x \in S(f)$ and $x$ is maximal, then $\Pro(f)(x) = q$ and $\overline{\Pro(f)}(x) = r$. Again, $x \in S(\overline{f})$ as well, so $\Pro(\overline{f})(x) = r$.
        \item If $x \in S(f)$ and $x$ is not maximal, then $\Pro(f)(x) = \min_{z \gtrdot x} f(z) - 1$. Let $w \in P$ be an element satisfying this minimum, i.e. $w \gtrdot x$ and $f(w) = \min_{z \gtrdot x}f(z)$. Let $f(w) = i_k$. Then, $\overline{\Pro(f)}(x) = k-1$. Since deflation preserves relative orders, $k = \overline{f}(w) = \min_{z \gtrdot x}\overline{f}(z)$ as well. Therefore, $\Pro(\overline{f})(x) = k-1$.
    \end{itemize}
    We have at this point shown that for all $x \in P$, $\overline{\Pro(f)}(x) = \Pro(\overline{f})(x)$.

        Now assume the first entry in $\Con(f)$ is 0.  In this case, $\sigma_{1\to\boxempty}$ does not change $f$ because there are no $1$ labels in $f$.  Thus $\Pro(f)=f$ and the conclusion holds.
\end{proof}

\begin{example}\label{ex:Deflate}
We demonstrate \Cref{prop:ProRestricts}  on our running example.

 \begin{center}
\begin{tikzpicture}[scale=1.2]
\node(a) at (0.5,0){$1$};
\node(b) at (1.5,0){$1$};
\node(c) at (2.5,0){$2$};
\node(d) at (0,1){$4$};
\node(e) at (1,1){$6$};
\node(f) at (2,1){$4$};
\node(g) at (3,1){$3$};
\node(h) at (0.5,2){$8$};
\node(i) at (1.5,2){$9$};
\node(j) at (2.5,2){$8$};
\draw(a) -- (d);
\draw(a) -- (f);
\draw(b) -- (d);
\draw(b) -- (e);
\draw(b) -- (f);
\draw(b) -- (g);
\draw(c) -- (e);
\draw(c) -- (g);
\draw(d) -- (h);
\draw(e) -- (h);
\draw(f) -- (j);
\draw(g) -- (j);
\draw(d) -- (i);
\draw(e) -- (i);
\draw(f) -- (i);
\draw(g) -- (i);
\node() at (4,1){$\xrightarrow[]{\Pro}$};
\node() at (1.5,-1){$\xdownarrow{.45cm}$};
\node[scale=0.8] at (2,-1){defl};
\node(a1) at (5.5,0){$3$};
\node(b1) at (6.5,0){$2$};
\node(c1) at (7.5,0){$1$};
\node(d1) at (5,1){$7$};
\node(e1) at (6,1){$5$};
\node(f1) at (7,1){$7$};
\node(g1) at (8,1){$7$};
\node(h1) at (5.5,2){$9$};
\node(i1) at (6.5,2){$8$};
\node(j1) at (7.5,2){$9$};
\draw(a1) -- (d1);
\draw(a1) -- (f1);
\draw(b1) -- (d1);
\draw(b1) -- (e1);
\draw(b1) -- (f1);
\draw(b1) -- (g1);
\draw(c1) -- (e1);
\draw(c1) -- (g1);
\draw(d1) -- (h1);
\draw(e1) -- (h1);
\draw(f1) -- (j1);
\draw(g1) -- (j1);
\draw(d1) -- (i1);
\draw(e1) -- (i1);
\draw(f1) -- (i1);
\draw(g1) -- (i1);
\node() at (6.5,-1){$\xdownarrow{.45cm}$};
\node[scale=0.8] at (7,-1){defl};
\node(a2) at (0.5,-4){$1$};
\node(b2) at (1.5,-4){$1$};
\node(c2) at (2.5,-4){$2$};
\node(d2) at (0,-3){$4$};
\node(e2) at (1,-3){$5$};
\node(f2) at (2,-3){$4$};
\node(g2) at (3,-3){$3$};
\node(h2) at (0.5,-2){$6$};
\node(i2) at (1.5,-2){$7$};
\node(j2) at (2.5,-2){$6$};
\draw(a2) -- (d2);
\draw(a2) -- (f2);
\draw(b2) -- (d2);
\draw(b2) -- (e2);
\draw(b2) -- (f2);
\draw(b2) -- (g2);
\draw(c2) -- (e2);
\draw(c2) -- (g2);
\draw(d2) -- (h2);
\draw(e2) -- (h2);
\draw(f2) -- (j2);
\draw(g2) -- (j2);
\draw(d2) -- (i2);
\draw(e2) -- (i2);
\draw(f2) -- (i2);
\draw(g2) -- (i2);
\node() at (4,-3){$\xrightarrow[]{\Pro}$};
\node(a3) at (5.5,-4){$3$};
\node(b3) at (6.5,-4){$2$};
\node(c3) at (7.5,-4){$1$};
\node(d3) at (5,-3){$5$};
\node(e3) at (6,-3){$4$};
\node(f3) at (7,-3){$5$};
\node(g3) at (8,-3){$5$};
\node(h3) at (5.5,-2){$7$};
\node(i3) at (6.5,-2){$6$};
\node(j3) at (7.5,-2){$7$};
\draw(a3) -- (d3);
\draw(a3) -- (f3);
\draw(b3) -- (d3);
\draw(b3) -- (e3);
\draw(b3) -- (f3);
\draw(b3) -- (g3);
\draw(c3) -- (e3);
\draw(c3) -- (g3);
\draw(d3) -- (h3);
\draw(e3) -- (h3);
\draw(f3) -- (j3);
\draw(g3) -- (j3);
\draw(d3) -- (i3);
\draw(e3) -- (i3);
\draw(f3) -- (i3);
\draw(g3) -- (i3);
\end{tikzpicture}
 \end{center}

\end{example}

\Cref{prop:ProRestricts} immediately implies the following, which will be useful in later proofs.

\begin{corollary}\label{cor:can-find-lift}
    Let $P$ be a poset and $\mathcal{O}$ be a promotion orbit in $\Inc^q(P)$.  Suppose $f\in \mathcal{O}$ and $\overline{f}$ is $r$-packed.  Let $\overline{\mathcal{O}}$ be the promotion orbit of $\overline{f}$ in $\Inc^r(P)$. If $w\in \overline{\mathcal{O}}$, then there exists $g\in \mathcal{O}$ such that $\overline{g}=w$.
\end{corollary}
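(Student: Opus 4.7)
The natural candidate for the desired $g$ is $\Pro^j(f)$ for a well-chosen non-negative integer $j$, so the goal is to show that for every $w \in \overline{\mathcal{O}}$ there exists $j$ with $\overline{\Pro^j(f)} = w$. Write $w = \Pro^k(\overline{f})$ for some $k \geq 0$. The task reduces to finding $j$ such that applying promotion $j$ times to $f$ and then deflating produces the same labeling as applying promotion $k$ times directly to $\overline{f}$.

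The main tool is an iterated version of \Cref{prop:ProRestricts}. By induction on $j$, I would establish the identity
\[
\overline{\Pro^{j}(f)} = \Pro^{m_j}(\overline{f}),
\]
where $m_j$ counts the indices $i \in \{0,1,\dots,j-1\}$ for which the first entry of $\Con(\Pro^i(f))$ equals $1$. The inductive step is immediate: if the first entry of $\Con(\Pro^i(f))$ is $1$, then \Cref{prop:ProRestricts} advances the deflation by one application of promotion; otherwise the deflation is unchanged. Combining this with \Cref{lem:ProRotateContent}, which gives $\Con(\Pro^i(f)) = \rho^i(\Con(f))$, the number $m_j$ is exactly the count of $1$'s appearing in the first $j$ cyclic entries of $\Con(f)$.

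To finish, I would observe that the sequence $(m_j)_{j \geq 0}$ starts at $m_0 = 0$ and satisfies $m_{j+1} - m_j \in \{0,1\}$, so it is non-decreasing with increments of at most one. Since $\overline{f}$ is $r$-packed with $r \geq 1$ (the case $r=0$ being vacuous as $P$ would be empty), every cyclic window of length $q$ in $\Con(f)$ contains exactly $r$ ones, so $m_j \to \infty$. Consequently $m_j$ attains every non-negative integer value, and in particular there exists $j$ with $m_j = k$. Setting $g = \Pro^j(f) \in \mathcal{O}$ then yields $\overline{g} = \Pro^k(\overline{f}) = w$, as required.

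There is no serious obstacle here; the argument is essentially an accounting exercise combining \Cref{prop:ProRestricts} and \Cref{lem:ProRotateContent}. The one point that warrants care is verifying that the sequence $m_j$ is unbounded, which is why the packedness hypothesis $r \geq 1$ enters: it guarantees that $\Con(f)$ contains at least one $1$, so that promotion on $\overline{f}$ is indeed \emph{activated} infinitely often as $j$ grows.
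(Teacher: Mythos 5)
Your argument is correct and is exactly the fleshed-out version of what the paper asserts: the paper derives this corollary directly from \Cref{prop:ProRestricts} (stating only that it ``immediately implies'' the result), and your iteration $\overline{\Pro^{j}(f)} = \Pro^{m_j}(\overline{f})$ with $m_j$ counting activated steps, together with the observation that $m_j$ increments by $0$ or $1$ and is unbounded because $\Con(f)$ contains $r\geq 1$ ones per cyclic window, is the intended bookkeeping. No substantive difference from the paper's approach.
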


A consequence of the above proposition and corollary is that all labelings in a promotion orbit $\mathcal{O}$ of $\Inc^q(P)$ deflate to all labelings in a single promotion orbit $\overline{\mathcal{O}}$ of $\Inc^r(P)$.  Thus, by abuse of notation, we can refer to $\overline{\mathcal{O}}$ as the deflation of the orbit $\mathcal{O}$.

\begin{remark}\label{remark:defl_con}
Note that an increasing labeling is uniquely determined by its deflation and its binary content word (see \cite[Proposition 4.2]{mandel2018orbits}).  Therefore, if we want to understand the promotion orbit containing a fixed labeling, it suffices to consider the period of its binary content word and the orbit of its deflation.
\end{remark}

The analogue of the next theorem was proven for miniscule posets in \cite[Theorem 6.1]{mandel2018orbits}; given \Cref{prop:ProRestricts} above, the proof method of \cite[Theorem 6.1]{mandel2018orbits} yields the following. 
\begin{theorem}\label{thm:OrderComparedToDeflation}
Let $f \in \Inc^q(P)$ such that $\Con(f)$ has period $\ell$. Suppose $\overline{f}$ is $r$-packed and  
the size of the promotion orbit containing $\overline{f}$ is $\tau$. Then, the promotion orbit of $f$ has size \[
\frac{\tau \ell}{\gcd(r\ell/q,\tau)}.
\]
\end{theorem}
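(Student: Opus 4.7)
The plan is to use the decomposition noted in \Cref{remark:defl_con}: an increasing labeling is determined by its binary content word together with its deflation, so $\Pro^m(f)=f$ if and only if $\Con(\Pro^m(f))=\Con(f)$ and $\overline{\Pro^m(f)}=\overline{f}$. By \Cref{lem:ProRotateContent} the first condition is equivalent to $\rho^m(\Con(f))=\Con(f)$, which, since $\Con(f)$ has period $\ell$, holds exactly when $\ell\mid m$. I would therefore write $m=k\ell$ and look for the smallest positive integer $k$ making $\overline{\Pro^{k\ell}(f)}=\overline{f}$.

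The central step is to establish, by induction on $k$, the identity
\[
\overline{\Pro^{k\ell}(f)} \;=\; \Pro^{k(r\ell/q)}(\overline{f}).
\]
By \Cref{prop:ProRestricts}, each individual application of $\Pro$ either advances the deflation by one step of $\Pro$ (when the first entry of the current content is $1$) or leaves the deflation unchanged (when that entry is $0$). Tracking this over one full period, the first entries of the content encountered are $c_1,c_2,\ldots,c_\ell$, which---since $\Con(f)$ has length $q$, period $\ell$, and exactly $r$ ones in total---contain precisely $r\ell/q$ ones. This incidentally confirms that $r\ell/q$ is an integer. Iterating this observation block by block gives the displayed identity.

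Given this identity, the requirement $\Pro^{k(r\ell/q)}(\overline{f})=\overline{f}$ is equivalent, by the definition of $\tau$, to the divisibility $\tau\mid k(r\ell/q)$. The smallest positive $k$ satisfying this is $k=\tau/\gcd(\tau,r\ell/q)$, which yields the orbit size $m=k\ell=\tau\ell/\gcd(r\ell/q,\tau)$, as claimed. Minimality of $m$ is automatic: any valid $m$ must make $\Con$ return to $\Con(f)$, forcing $\ell\mid m$, and then the congruence above forces $k$ to be at least $\tau/\gcd(\tau,r\ell/q)$.

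The main obstacle I anticipate is correctly handling the case where the first entry of $\Con(\Pro^j(f))$ is $0$: there, $\Pro$ does not fix $f$ itself (the promotion definition shifts every label down by one in that situation), yet it does fix the deflation, so one must carefully distinguish ``promotion steps that act nontrivially on $\overline{f}$'' from ``promotion steps that fix $f$'' when counting. A related bookkeeping subtlety is ensuring the induction on $k$ tracks the content correctly after each period: although the first entries over one period are always $c_1,\ldots,c_\ell$, the labels themselves on $f$ may look quite different after $k\ell$ steps, and one uses \Cref{cor:can-find-lift} to confirm that deflation-level statements lift consistently to $\mathcal{O}$.
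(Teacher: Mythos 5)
Your argument is correct and is essentially the proof the paper intends: the paper does not write out details but defers to the method of \cite[Theorem 6.1]{mandel2018orbits} combined with \Cref{prop:ProRestricts}, and your write-up supplies exactly that argument (decompose into content word plus deflation via \Cref{remark:defl_con}, note that each period of $\ell$ promotion steps advances the deflation by $r\ell/q$ steps, then solve the resulting divisibility condition). Your side remark about the content-$0$ case is also well taken --- there $\Pro$ shifts all labels down by one rather than fixing $f$, but the deflation is indeed unchanged, which is all the argument needs.
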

In the next section, we apply this theorem to find the order of promotion on specific posets. Then in \Cref{sec:swap,sec:deflation}, we use it to prove several instances of orbitmesy.

We also have the following general corollary.
\begin{corollary}\label{cor:Divisible}
    For any poset $P$ and $q>|P|$, the order of promotion on $\Inc^q(P)$ is divisible by~$q$.
\end{corollary}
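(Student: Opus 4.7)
The plan is to reduce the statement to finding a single promotion orbit whose size is divisible by $q$: since the order of promotion on $\Inc^q(P)$ is the least common multiple of its orbit sizes, one such orbit forces $q$ to divide the order. To produce such an orbit, I would invoke \Cref{thm:OrderComparedToDeflation}, which expresses the orbit size of $f$ as $\frac{\tau\ell}{\gcd(r\ell/q,\tau)}$ in terms of the period $\ell$ of $\Con(f)$, the packed size $r$ of $\overline{f}$, and the size $\tau$ of the orbit of $\overline{f}$. The key observation is that choosing $f$ so that its content word has period $\ell=q$ forces $r\ell/q = r$, and the resulting orbit size
\[
\frac{\tau q}{\gcd(r,\tau)} \;=\; q\cdot\frac{\tau}{\gcd(r,\tau)}
\]
is a positive integer multiple of $q$.

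Next I would construct such an $f$ explicitly. Pick any packed labeling $\overline{f}\in\Inc^r(P)$; one always exists, for example by labeling each $x\in P$ by $1$ plus the length of the longest chain strictly below $x$, giving an $r$-packed labeling with $r\leq |P|$. Since $q>|P|\geq r$, we have $0<r<q$, and we may choose a content word $c$ of length $q$ with exactly $r$ ones whose period equals $q$; the concrete choice $c=1^r 0^{q-r}$ works, as any nontrivial cyclic shift moves at least one $1$ into the $0$-block or vice versa, forcing the period to be $q$. Using the bijection between $\Inc^q(P)$ and pairs (packed labeling, content word of the appropriate size) noted in \Cref{remark:defl_con}, lift $\overline{f}$ to the unique $f\in\Inc^q(P)$ with $\Con(f)=c$ and deflation $\overline{f}$; concretely, if the ones of $c$ occur in positions $p_1<\cdots<p_r$, set $f(x)=p_{\overline{f}(x)}$.

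Finally, applying \Cref{thm:OrderComparedToDeflation} with $\ell=q$ to this $f$ yields a promotion orbit of size $\frac{\tau q}{\gcd(r,\tau)}$, divisible by $q$, completing the argument. The only substantive step to verify carefully is that the lifted labeling $f$ genuinely lies in $\Inc^q(P)$ and has the prescribed content word and deflation, which is immediate from the construction via the positions $p_1<\cdots<p_r$; no deeper obstacle arises.
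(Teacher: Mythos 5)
Your proposal is correct and follows essentially the same route as the paper: the paper likewise takes a packed labeling, views it as an element of $\Inc^q(P)$ (so that its content word is $1^r0^{q-r}$ with period $q$), and applies \Cref{thm:OrderComparedToDeflation} to get an orbit of size $\frac{\tau}{\gcd(r,\tau)}q$. Your additional checks (that $0<r<q$ forces period exactly $q$, and that one orbit of size divisible by $q$ suffices since the order is the lcm of orbit sizes) are details the paper leaves implicit.
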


\begin{proof}
    Let $f$ be a packed increasing labeling of $P$.  We can consider $f$ to be an element of $\Inc^q(P)$. Then $\Con(f)=(1,1,\dots,1,0,0,\dots,0)$.  The period of $\Con(f)$ is $q$.  Thus, by \Cref{thm:OrderComparedToDeflation}, the promotion orbit of $f$ has size $\displaystyle\frac{\tau}{\gcd(r,\tau)}q$, where $\tau,r$ are as in the theorem statement.  Since $\displaystyle\frac{\tau}{\gcd(r,\tau)}$ is an integer, the conclusion holds.
\end{proof}

\section{Linear Growth of promotion}
\label{sec:lin_growth}

\Cref{thm:OrderComparedToDeflation} tells us that if we want to understand the order of promotion on  $\Inc^q(P)$, we only need to understand the orbits of $r$-packed increasing labelings for $r \leq q$. In particular, we can consider all $q \geq \vert P \vert$ simultaneously by considering all packed labelings of $P$. We apply this strategy to explicitly compute the order of promotion on small zig-zag posets. 
Recall  we denote as $\calZ_n$ the zig-zag poset with $n$ elements.

It is easy to see that the order of promotion on $\Inc^q(P)$ for $P=[n]$, the chain of $n$ elements, is $q$ for $q>n$. 
We now obtain the order of promotion on $\calZ_3$, the first zig-zag poset that is not a chain. 

\begin{corollary}\label{thm:linear-Z3}
The order of promotion on increasing labelings of $\calZ_3$ with labels in $[q]$ for $q \geq 3$ is $2q$.
\end{corollary}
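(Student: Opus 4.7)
The strategy is to apply \Cref{thm:OrderComparedToDeflation}, which reduces the computation of promotion orbit sizes in $\Inc^q(\calZ_3)$ to the (finitely many) packed labelings of $\calZ_3$ together with the period of the binary content word. Writing a labeling of $\calZ_3$ as $(f(x_1),f(x_2),f(x_3))$ with $x_2$ the maximum element, I would first enumerate the packed labelings: the unique one with $r=2$ is $(1,2,1)$, and those with $r=3$ are $(1,3,2)$ and $(2,3,1)$. A short direct computation then shows that $(1,2,1)$ is fixed by promotion in $\Inc^2(\calZ_3)$, giving $\tau=1$, while $\Pro((1,3,2))=(2,3,1)$ in $\Inc^3(\calZ_3)$, so $(1,3,2)$ and $(2,3,1)$ form a single orbit of size $\tau=2$.

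Next, I would show that every orbit of promotion on $\Inc^q(\calZ_3)$ has size dividing $2q$. Let $f\in\Inc^q(\calZ_3)$ and let $\ell$ denote the period of $\Con(f)$, so $\ell\mid q$. If $\overline{f}=(1,2,1)$, then \Cref{thm:OrderComparedToDeflation} gives orbit size $\ell$, which divides $q$, hence $2q$. If $\overline{f}\in\{(1,3,2),(2,3,1)\}$, the formula gives $2\ell/\gcd(3\ell/q,2)$, which is either $\ell$ or $2\ell$, and in either case divides $2q$. Hence the order of promotion, namely the lcm of the orbit sizes, divides $2q$.

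To see the order equals $2q$, I would exhibit orbits whose sizes have lcm $2q$. For $q\geq 4$, the labeling $f=(1,3,2)\in\Inc^q(\calZ_3)$ has binary content word $(1,1,1,0,\dots,0)$, which has period exactly $q$ (three consecutive $1$'s followed by at least one $0$ cannot appear inside a proper period of a length-$q$ word with only three $1$'s in total), so $\ell=q$ and the formula yields orbit size $2q/\gcd(3,2)=2q$. For the boundary case $q=3$, a direct check of the five elements of $\Inc^3(\calZ_3)$ produces an orbit of size $3$ through $(1,2,1)$ and an orbit of size $2$ through $(1,3,2)$, whose lcm is $6=2q$. The main subtlety is confirming aperiodicity of the content word for $q\geq 4$ and handling $q=3$ separately by hand; once these are addressed, the result follows directly from \Cref{thm:OrderComparedToDeflation}.
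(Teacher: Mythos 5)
Your proof is correct and follows essentially the same route as the paper: enumerate the packed labelings of $\calZ_3$, compute their promotion orbits, and apply \Cref{thm:OrderComparedToDeflation} together with the possible periods of the binary content word. If anything, you are slightly more careful than the paper in verifying that an orbit of size exactly $2q$ actually occurs (the content word $(1,1,1,0,\dots,0)$ has period $q$ only for $q\geq 4$) and in treating $q=3$ separately, where the lcm $6=2q$ arises from orbits of sizes $3$ and $2$ rather than from a single orbit of size $2q$.
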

\begin{proof}
The poset
$\calZ_3$ has one $2$-packed labeling (with $1$ labels on both bottom elements and a $2$ label on the top) which maps to itself under promotion in $\Inc^2(\calZ_3)$. The period of $\Con(f)$ for $f$ that deflates to this is always $q$ if $q$ is odd and may be $q/2$ if $q$ is even. Thus by \Cref{thm:OrderComparedToDeflation} the promotion orbit size of this orbit in $\Inc^q(\calZ_3)$ is $1\ell/\gcd(2\cdot \ell/q,1)=\ell$, where $\ell=q$ or $q/2$.

There are two $3$-packed labelings (with a $1$ and a $2$ on the bottom and $3$ on the top) which map to each other under promotion in $\Inc^3(\calZ_3)$. $\Con(f)$ has three $1$'s and $q-3$ zeros, so its period may be $q$ or $q/3$, if $q$ is divisible by $3$. Thus by \Cref{thm:OrderComparedToDeflation} the promotion orbit size of this orbit in $\Inc^q(\calZ_3)$ is $2\ell/\gcd(3\cdot \ell/q,2)$. For each of $\ell=q,q/3$ this equals $2q$ or $2q/3$, respectively.

Since there are orbits of size $q$, $q/2$ (if $q$ is even), $2q$, and $2q/3$ (if $q$ is divisible by 3), the order of promotion is $2q$.
\end{proof}

\begin{remark}
\Cref{thm:linear-Z3} may also be proved using a result of Plante and Roby \cite[Theorem 3.3]{PlanteRobyFPSAC2024} showing the order of \emph{rowmotion} on order ideals of the poset  $\calZ_3\times[q-2]$ is $2q$ combined with a bijection of Dilks, Striker, and Vorland \cite{dilks2019rowmotion} which specializes to show these order ideals under rowmotion are in equivariant bijection with the increasing labelings $\Inc^q(\calZ_3)$ under promotion.
This is also related to a more general result \cite[Theorem 4]{adenbaum2023order} on the order of \emph{$P$-strict labeling promotion}, as defined in \cite{bernstein2024p}.
\end{remark}

\begin{corollary}\label{thm:linear-Z4}
The order of promotion on increasing labelings of $\calZ_4$ with labels in $[q]$ for $q \geq 4$ is $15q$.
\end{corollary}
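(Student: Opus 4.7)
The plan is to follow the argument used to prove \Cref{thm:linear-Z3}: enumerate all packed increasing labelings of $\calZ_4$, compute their promotion orbits inside each $\Inc^r(\calZ_4)$ for $r \in \{2,3,4\}$, apply \Cref{thm:OrderComparedToDeflation} to determine all possible promotion orbit sizes in $\Inc^q(\calZ_4)$, and then take the LCM.

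First I would enumerate the packed labelings. Since the longest chain in $\calZ_4$ has length $2$, the packing parameter satisfies $r \in \{2,3,4\}$. A short case analysis, organized by which of the minimal elements $x_1,x_3$ receive the label $1$ and which of the maximal elements $x_2,x_4$ receive the largest label, should produce exactly one $2$-packed labeling (namely $(1,2,1,2)$), five $3$-packed labelings, and five $4$-packed labelings. I would then compute $\Pro$ on each of these eleven labelings directly from \Cref{def:ProJDT}. My expectation is that the unique $2$-packed labeling is fixed by promotion; the five $3$-packed labelings form a single promotion orbit of size~$5$ inside $\Inc^3(\calZ_4)$; and the five $4$-packed labelings split into two promotion orbits inside $\Inc^4(\calZ_4)$, of sizes~$3$ and~$2$ respectively.

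With these packed orbits in hand, \Cref{thm:OrderComparedToDeflation} applied to the four pairs $(r,\tau) \in \{(2,1),(3,5),(4,3),(4,2)\}$ produces a finite table of possible orbit sizes in $\Inc^q(\calZ_4)$. Since $\Con(f)$ has exactly $r$ ones, its period $\ell$ must be of the form $q/d$ where $d$ divides $\gcd(r,q)$, and the resulting orbit has size $\tau\ell/\gcd(r\ell/q,\tau)$. Two entries will drive the LCM: the pair $(3,5)$ with $\ell=q$ produces an orbit of size $5q$, realized for every $q \geq 4$ by lifting a $3$-packed labeling to a content word that is not $(q/d)$-periodic for any $d > 1$; and the pair $(4,3)$ with $\ell=q$ produces an orbit of size $3q$ for every $q \geq 5$. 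In the boundary case $q=4$, the $(4,3)$-orbit only contributes a size-$3$ orbit (since the only length-$4$ content word with four $1$s has period $1$), but $\lcm(5\cdot 4, 3) = 60 = 15 \cdot 4$, so the answer is unchanged. Every other table entry, namely $q$, $q/2$, $5q/3$, $3q/2$, and $3q/4$, divides $15q$, so the LCM of all orbit sizes is exactly $15q$.

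The main obstacle is the hands-on computation of promotion on the eleven packed labelings; this requires careful bookkeeping of the jeu de taquin slides on the N-shaped poset $\calZ_4$, and an arithmetic slip on a single labeling could change the number or sizes of the packed orbits and thereby derail the final LCM. A secondary concern is confirming that for every $q \geq 4$ one can genuinely realize orbit sizes whose LCM equals $15q$ (rather than a proper divisor), but this follows from the explicit content-word constructions noted above together with the routine arithmetic check that every smaller-period orbit size divides $15q$.
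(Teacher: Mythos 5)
Your proposal is correct and follows essentially the same route as the paper: enumerate the packed labelings of $\calZ_4$ (one $2$-packed fixed point, a single $3$-packed orbit of size $5$, and $4$-packed orbits of sizes $2$ and $3$), tabulate the possible orbit sizes $\tau\ell/\gcd(r\ell/q,\tau)$ over all admissible periods $\ell$, and take the LCM. Your explicit treatment of the $q=4$ boundary (where the $\ell=q$ content word with four $1$'s degenerates to period $1$, so the $3q$ orbit is replaced by a size-$3$ orbit with $\lcm(5\cdot 4,3)=60=15\cdot 4$) is in fact slightly more careful than the paper's blanket claim that the first-row orbit types always exist.
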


\begin{proof}
In \Cref{tab:Z4Orbits}, we describe all packed labelings of $\calZ_4$.
The first two rows tell us there is one packed 2-labeling; there are five 3-packed labelings which all are in the same promotion orbit; and there are five 4-packed labelings which split into promotion orbits of sizes two and three.

To use \Cref{thm:OrderComparedToDeflation}, we also need to consider all possible periods of the binary content words. If $\overline{f} \in \Inc^r(P)$, then the possible periods of $\Con(f)$ are exactly $\frac{q}{r'}$ where $r' \vert r$. 
The third to sixth rows of \Cref{tab:Z4Orbits} compute the quantity $\frac{\ell \tau}{\gcd(\ell r/q,\tau)}$ for all triples $(r,\tau,\ell)$ where $\ell = \frac{q}{r'}$ with $r' \vert r$.

The order of promotion on $\Inc^q(P)$ is the least common multiple of the sizes of all the orbits.  Note that the orbit types described in each row of \Cref{tab:Z4Orbits} arise exactly for the values of $q$ where the $\ell$ value for the row is an integer.  Since $\ell=q$ is always an integer, the orbit types in the first row always exist.  The least common multiple of the orders listed in the first row is $15q$, so the order of promotion on $\Inc^q(P)$ is a multiple of $15q$.  Since \Cref{tab:Z4Orbits} addresses all possible orbits of $\Inc^q(P)$, the order of promotion on $\Inc^q(P)$ divides the least common multiple of the orders listed in \Cref{tab:Z4Orbits}, which is $15q$. Thus, the order of promotion on $\Inc^q(P)$ is exactly $15q$.
\end{proof}

\begin{table}[htbp]
    \centering
    \renewcommand{\arraystretch}{1.5}
    \setlength{\tabcolsep}{10pt}
    \begin{tabular}{|c||c|c|c|c|}\hline
       $r$  & $2$ & $ 3$ & \multicolumn{2}{c|}{$4$}\\\hline
   $\tau$ & $1$ & $5$ & $2$ & $3$\\\hline\hline
$\ell = q$ & $q$ & $5q$ & $q$ & $3q$\\\hline
$\ell = \frac{q}{2}$ & $\frac{q}{2}$& & $\frac{q}{2}$ & $\frac{3q}{2}$\\\hline
$\ell = \frac{q}{3}$ & & $\frac{5q}{3}$ & & \\\hline
$\ell = \frac{q}{4}$ &  & & $\frac{q}{2}$ & $\frac{3q}{4}$\\\hline
    \end{tabular}
    \caption{All possible orbit sizes of the set of increasing labelings on $\calZ_4$ with values in $[q]$. Entries are left blank where it is not possible for a binary word with $r$ 1's to have period $\ell$.}
    \label{tab:Z4Orbits}
\end{table}

\begin{corollary}\label{thm:linear-Z5}
The order of promotion on increasing labelings of $\calZ_5$ with labels in $[q]$ for $q \geq 5$ is $120q$.
\end{corollary}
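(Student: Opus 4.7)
The plan is to follow the template used for \Cref{thm:linear-Z4}: enumerate all packed increasing labelings of $\calZ_5$, compute their promotion orbits, and then apply \Cref{thm:OrderComparedToDeflation} to list every possible promotion orbit size in $\Inc^q(\calZ_5)$ for $q \geq 5$. The order of promotion is the least common multiple of these sizes.

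The first step is to enumerate the $r$-packed labelings of $\calZ_5$ for each $r \in \{2, 3, 4, 5\}$ and decompose them into promotion orbits by direct application of \Cref{def:ProJDT}. The $r=2$ case is immediate: there is a single packed labeling, giving $\tau = 1$. The $5$-packed labelings are precisely the linear extensions of $\calZ_5$, which are in bijection with alternating permutations of $[5]$ and are counted by the tangent number $E_5 = 16$ since $\calZ_5$ is the alternating fence on five elements. The $r = 3$ and $r = 4$ cases are finite and can be handled by a direct case analysis on the label at each element. Once the orbit sizes $\tau$ at each level $r$ are known, I would build a table analogous to \Cref{tab:Z4Orbits} whose rows are indexed by the admissible periods $\ell = q/r'$ for divisors $r' \mid r$ and whose $(\ell, \tau)$ entry is $\tau \ell / \gcd(r\ell/q, \tau)$.

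The main obstacle will be the explicit computation of the promotion orbits at $r = 4$ and $r = 5$; this is a tedious but routine application of \Cref{def:ProJDT} across many cases. Since $120q = 15q \cdot 8$ and the calculation for $\calZ_4$ already produces orbit sizes whose least common multiple is $15q$, the new ingredient here must contribute a factor of $8$. I expect this to arise from a packed orbit at level $r = 5$ whose size is divisible by $8$ and coprime to $5$, which combined with $\ell = q$ yields an orbit of size $8q$ in $\Inc^q(\calZ_5)$; since $16 = 8 + 5 + 3$, a plausible decomposition of the $16$ linear extensions is into promotion orbits of sizes $8$, $5$, and $3$, whose least common multiple at $\ell = q$ is $24q$. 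After filling in the full table and combining with the contributions from $r \in \{2, 3, 4\}$, verifying that the overall least common multiple is exactly $120q$ is then a routine computation.
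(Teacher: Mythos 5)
Your overall strategy --- enumerate the packed labelings of $\calZ_5$, decompose them into promotion orbits, apply \Cref{thm:OrderComparedToDeflation} to tabulate all possible orbit sizes in $\Inc^q(\calZ_5)$, and take the least common multiple --- is exactly the paper's approach (see \Cref{tab:Z5Orbits}). The problem is that you defer the one computation that constitutes the entire content of the proof, and the specific predictions you make about its outcome are wrong. The $16$ linear extensions of $\calZ_5$ do not split into promotion orbits of sizes $8$, $5$, and $3$; they split into orbits of sizes $4$ and $12$ (one can check, e.g., that $13254$ lies in an orbit of size $12$, leaving $\{15243, 23154, 34251, 45132\}$ as an orbit of size $4$). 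Your heuristic that ``the new factor of $8$ must come from level $r=5$'' also fails: the factor of $8$ in $120q$ actually arises from an orbit of \emph{eight} $3$-packed labelings, which by \Cref{thm:OrderComparedToDeflation} yields orbits of size $8q/\gcd(3,8) = 8q$ in $\Inc^q(\calZ_5)$.

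A related gap is your implicit assumption that the $r \le 4$ data carries over from the $\calZ_4$ computation. It does not: the packed labelings of $\calZ_5$ at levels $r = 3$ and $r = 4$ are labelings of a different poset, and they decompose into orbits of sizes $\{2, 8\}$ and $\{2, 10\}$ respectively, rather than the $\{5\}$ and $\{2,3\}$ found for $\calZ_4$. With the correct data $(r,\tau) \in \{(2,1), (3,2), (3,8), (4,2), (4,10), (5,4), (5,12)\}$, the $\ell = q$ row gives orbit sizes $q, 2q, 8q, q, 5q, 4q, 12q$, whose least common multiple is $120q$; one must then also check that these orbits exist for every $q \ge 5$ (they do, since $\ell = q$ is always an admissible period) and that no entry from the rows with $\ell$ a proper divisor of $q$ raises the least common multiple above $120q$. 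Without carrying out the enumeration of packed orbits correctly, the argument cannot be completed, and as written your predicted numbers would lead to the wrong answer.
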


\begin{proof}
\Cref{tab:Z5Orbits} summarizes the promotion orbits of all packed labelings of $\calZ_5$. As in the proof of \Cref{thm:linear-Z4}, the orbit types in the first row always exist.  The least common multiple of the orders listed in the first row is $120q$, so the order of promotion on $\Inc^q(P)$ is a multiple of $120q$. The least common multiple of all the orders listed in \Cref{tab:Z5Orbits} is $120q$, so the order of promotion on $\Inc^q(P)$ divides $120q$. Thus, the order of promotion on $\Inc^q(P)$ is exactly $120q$.
\end{proof}

\begin{table}[htbp]
    \centering
    \renewcommand{\arraystretch}{1.5}
    \setlength{\tabcolsep}{10pt}
    \begin{tabular}{|c||c|c|c|c|c|c|c|}\hline
        $r $ & $2$ & \multicolumn{2}{c|}{$3$} & \multicolumn{2}{c|}{$4$} & \multicolumn{2}{c|}{$5$}\\\hline
      $\tau $   & 1 & 2 & 8 & 2 & 10 & 4 & 12\\\hline\hline
    $\ell = q$ & $q$ & $2q$& $8q$ & $q$& $5q$& $4q$ & $12q$\\\hline
    $\ell = \frac{q}{2}$ & $\frac{q}{2}$ & & & $\frac{q}{2}$& $\frac{5q}{2}$& & \\\hline
    $\ell = \frac{q}{3}$ & & $\frac{2q}{3}$& $\frac{8q}{3}$& & & & \\\hline
    $\ell = \frac{q}{4}$ & & & &$\frac{q}{2}$  & $\frac{5q}{2}$ & & \\\hline
    $\ell = \frac{q}{5}$ & & & & & & $\frac{4q}{5}$ & $\frac{12q}{5}$\\\hline
    \end{tabular}
    \caption{All possible orbit sizes of the set of increasing labelings on $\calZ_5$ with values in $[q]$. Entries are left blank where it is not possible for a binary word with $r$ 1's to have period $\ell$ }
    \label{tab:Z5Orbits}
\end{table}

\begin{remark}\label{rmk:OrderOnZnLargern}
The procedure used for \Cref{thm:linear-Z4} and \Cref{thm:linear-Z5} works in principle for any fixed value of $n$. One may hope that the order of promotion for $\calZ_n$ would be of the form $m_{\calZ_n}\cdot q$ where $m_{\calZ_n}$ is a nice sequence. The order of promotion on increasing labelings of $\calZ_6$ with labels in $[q]$ for $q\geq 6$ computationally appears to be $13090q$ (tested up to $q=10$). This indicates that though characterizing the orbit sizes of packed labelings of $\calZ_n$ is a finite computation for fixed $n$, it is a difficult problem in general. See \Cref{sec:future} for further discussion.
\end{remark}

When $n$ is odd, we can improve \Cref{cor:Divisible} and show that the order of promotion on $\mathcal{Z}_n$ is divisible by $2q$. 

\begin{corollary}
For $n$ odd and $q > 3$, the order of promotion on $\Inc^q(\calZ_n)$ is divisible by $2q$. 
\end{corollary}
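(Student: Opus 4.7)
The plan is to exhibit an explicit promotion orbit of size exactly $2q$ in $\Inc^q(\calZ_n)$ and then conclude from \Cref{thm:OrderComparedToDeflation}. The orbit will come from lifting a $3$-packed labeling whose promotion orbit in $\Inc^3(\calZ_n)$ has size $\tau = 2$. Explicitly, I would define $f \in \Inc^3(\calZ_n)$ by setting $f(x_{2j}) = 3$ at every maximal $x_{2j}$ and alternating $1$ and $2$ on the minimals: $f(x_{2i-1}) = 1$ if $i$ is odd and $f(x_{2i-1}) = 2$ if $i$ is even. Since $n \geq 3$ is odd there are at least two minimals, so both labels $1$ and $2$ appear and $f$ is genuinely $3$-packed.

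Next, I would compute $\Pro(f)$ directly from \Cref{def:ProJDT}. The crucial local observation is that after $\sigma_{1 \to \boxempty}$, each maximal $x_{2j}$ has exactly one $\boxempty$ among its lower covers (the one originally labeled $1$) and one $2$-labeled cover, regardless of the parity of $j$, since the minimals alternate $1,2,1,2,\ldots$. Thus $\sigma_2$ acts as the identity (the remaining $2$'s all sit at minimal positions without any empty cell below), while $\sigma_3$ simultaneously demotes every maximal $3$ to $\boxempty$ and promotes every empty minimal up to $3$. After $\sigma_{\boxempty \to 4}$ and subtracting $1$, one reads off $\Pro(f) = f'$, where $f'$ is obtained from $f$ by swapping the labels $1$ and $2$ on the minimals (maximals remain $3$). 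The identical calculation applied to $f'$ gives $\Pro(f') = f$, so $\{f, f'\}$ is a promotion orbit of size $\tau = 2$ in $\Inc^3(\calZ_n)$.

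Finally, viewing $f$ as an element of $\Inc^q(\calZ_n)$ with $q > 3$, its binary content word $\Con(f) = (1,1,1,0,\ldots,0)$ of length $q$ has period $\ell = q$ whenever $q > 3$, since the single block of three consecutive $1$'s followed by at least one $0$ precludes any smaller period. Applying \Cref{thm:OrderComparedToDeflation} with $r = 3$, $\tau = 2$, and $\ell = q$ yields a promotion orbit of size $\tau\ell/\gcd(r\ell/q,\tau) = 2q/\gcd(3,2) = 2q$ in $\Inc^q(\calZ_n)$. The order of promotion on $\Inc^q(\calZ_n)$ is the least common multiple of all orbit sizes and is therefore divisible by $2q$. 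The only mildly technical step is the slide computation in the second paragraph; once the parity-alternation of the minimals is carefully tracked so that the $\boxempty$-configuration after $\sigma_{1\to\boxempty}$ is globally compatible with a single sweep of $\sigma_3$, the rest follows immediately from \Cref{thm:OrderComparedToDeflation}.
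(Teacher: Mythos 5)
Your proposal is correct and follows essentially the same route as the paper: both exhibit the same pair of $3$-packed labelings (tops all $3$, minimals alternating $1,2$) forming a promotion orbit of size $\tau=2$, and then lift via \Cref{thm:OrderComparedToDeflation} to an orbit of size $2q$. You are somewhat more careful than the paper in verifying the jeu de taquin computation and in explicitly choosing the content word $(1,1,1,0,\ldots,0)$ so that its period is $\ell=q$, a point the paper's proof glosses over.
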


\begin{proof}
The following 3-packed labelings are in an orbit of size 2 together:
\begin{center}
\begin{tikzpicture}
    \node (A) at (0,0) {1};
    \node (B) at (1,1) {3};
    \node (C) at (2,0) {2};
    \node (D) at (3,1) {3};
    \node (E) at (4,0) {1};
    \node (F) at (5,1) {3};
    \node (G) at (6,0) {2};
    \node at (7.5,0.5) {$\dots$};
    \draw (A) -- (B);
    \draw (B) -- (C);
    \draw (C) -- (D);
    \draw (D) -- (E);
    \draw (E) -- (F);
    \draw (F) -- (G);
    \draw (G) -- (6.5,0.5);
\end{tikzpicture}

\begin{tikzpicture}
    \node (A) at (0,0) {2};
    \node (B) at (1,1) {3};
    \node (C) at (2,0) {1};
    \node (D) at (3,1) {3};
    \node (E) at (4,0) {2};
    \node (F) at (5,1) {3};
    \node (G) at (6,0) {1};
    \node at (7.5,0.5) {$\dots$};
    \draw (A) -- (B);
    \draw (B) -- (C);
    \draw (C) -- (D);
    \draw (D) -- (E);
    \draw (E) -- (F);
    \draw (F) -- (G);
    \draw (G) -- (6.5,0.5);
\end{tikzpicture}
\end{center}

If $f \in \Inc^q(P)$ deflates to one of the above labelings and $q>3$, then by \Cref{thm:OrderComparedToDeflation}, the promotion orbit of $w$ is size $\frac{2q}{\gcd(3,2)}=2q$.
\end{proof}
\section{Orbitmesy and Swap}
\label{sec:swap}

Throughout this section, let $P$ be a self-dual poset, and fix an order-reversing involution $\kappa: P \to P$. In this section, we define and study an operation called \emph{swap} on increasing labelings of a self-dual poset. We prove a homomesy for $\swap$ in \Cref{prop:swap-homomesy-antipodal}, which we then use to exhibit a class of orbitmesies in \Cref{thm:swap_orbitmesy}. The definition of $\swap$ is inspired by the  \emph{ideal complement} studied in \cite[p.4]{elizalde2021rowmotion}.  (Indeed, $\swap$ specializes to  ideal complement when $q=2$.)

\begin{definition}
  Given $f \in \Inc^q(P)$, define $\swap(f)(x) = q+1 - f(\kappa(x))$.  
\end{definition}

\begin{example}
The poset in \Cref{ex:FirstExOfIncrLabeling} is self-dual. An order-reversing involution is the following.

\begin{center}
\begin{tabular}{c||*{10}{c|}}
$x$ & $a$ & $b$ & $c$ & $d$ & $e$ & $f$ & $g$ & $h$ & $i$ & $j$ \\\hline
$\kappa(x)$ & $h$ & $i$ & $j$ & $d$ & $f$ & $e$ & $g$ &  $a$ & $b$ & $c$\\
\end{tabular}
\end{center}

Using this involution, we depict $\swap(f)$ for $f$ the increasing labeling given in \Cref{ex:FirstExOfIncrLabeling}, viewing $f$ as an element of $\Inc^9(P)$.

\begin{center}
\begin{tikzpicture}[scale=1.2]
\node(a) at (0.5,0){2};
\node(b) at (1.5,0){1};
\node(c) at (2.5,0){2};
\node(d) at (0,1){6};
\node(e) at (1,1){6};
\node(f) at (2,1){4};
\node(g) at (3,1){7};
\node(h) at (0.5,2){9};
\node(i) at (1.5,2){9};
\node(j) at (2.5,2){8};
\draw(a) -- (d);
\draw(a) -- (f);
\draw(b) -- (d);
\draw(b) -- (e);
\draw(b) -- (f);
\draw(b) -- (g);
\draw(c) -- (e);
\draw(c) -- (g);
\draw(d) -- (h);
\draw(e) -- (h);
\draw(f) -- (j);
\draw(g) -- (j);
\draw(d) -- (i);
\draw(e) -- (i);
\draw(f) -- (i);
\draw(g) -- (i);
\end{tikzpicture}  
\end{center}

\end{example}

We consider how the swap operation interacts with the operations defined in \Cref{sec:PromotionBasics}.
The first is immediate from the definitions.

\begin{lemma}\label{lem:SwapOnContent}
Given $f \in \Inc^q(P)$, if $\Con(f) = (c_1,\ldots,c_q)$, then $\Con(\swap(f)) = (c_q,\ldots,c_1)$.
\end{lemma}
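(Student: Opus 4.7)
The plan is to unwind both definitions and observe that $\kappa$ being a bijection makes the argument essentially immediate. Writing $\Con(\swap(f)) = (d_1, \ldots, d_q)$, I would show $d_i = c_{q+1-i}$ for each $i \in [q]$, which is exactly the statement that $\Con(\swap(f))$ is the reversal of $\Con(f)$.

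First I would recall that by definition, $d_i = 1$ if and only if there exists some $x \in P$ with $\swap(f)(x) = i$. Using the formula $\swap(f)(x) = q+1 - f(\kappa(x))$, the condition $\swap(f)(x) = i$ is equivalent to $f(\kappa(x)) = q+1-i$. Since $\kappa: P \to P$ is a bijection (it is an order-reversing involution), the element $y := \kappa(x)$ ranges over all of $P$ as $x$ does. Hence $d_i = 1$ if and only if there exists $y \in P$ with $f(y) = q+1-i$, which is precisely $c_{q+1-i} = 1$.

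Thus $d_i = c_{q+1-i}$ for every $i \in [q]$, so $\Con(\swap(f)) = (c_q, c_{q-1}, \ldots, c_1)$, as claimed. There is no substantive obstacle here; the result is essentially bookkeeping on definitions, with the only nontrivial ingredient being that $\kappa$ permutes $P$, a property guaranteed since $P$ is self-dual with fixed involution $\kappa$.
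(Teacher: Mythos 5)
Your proof is correct and is exactly the definition-unwinding argument the paper has in mind; the paper simply asserts the lemma is ``immediate from the definitions'' without writing out the details you supply. The key observation that $\kappa$ is a bijection, so $\kappa(x)$ ranges over all of $P$, is the only point of substance and you handle it correctly.
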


When we use this lemma in the future it will be helpful to have the following notation: given a vector $v$ we define $\rev(v)$ to be vector written in reverse order.  Thus, \Cref{lem:SwapOnContent} says $\Con(\swap(f))=\rev(\Con(f))$. For instance, given the increasing labeling $f$ as in \Cref{ex:FirstExOfIncrLabeling}, we see that $\Con(f) = (1,1,1,1,0,1,0,1,1)$ and $\Con(\swap(f)) = (1,1,0,1,0,1,1,1,1) = \rev((1,1,1,1,0,1,0,1,1))$.

Recall from \Cref{def:deflation} that $\overline{f}$ denotes the deflation of $f$.

\begin{lemma}\label{lem:SwapAndDeflation}
Let $f \in \Inc^q(P)$, and suppose $\overline{f}\in\Inc^r(P)$ is $r$-packed.  Then $\overline{\swap(f)} = \swap(\overline{f})$.
\end{lemma}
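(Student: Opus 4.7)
The plan is to unravel both sides of the identity using only the definitions of $\swap$ and deflation, and then verify they agree pointwise. Fix $x \in P$ and write the label set of $f$ as $i_1 < i_2 < \cdots < i_r$, so that by definition of deflation, $\overline{f}(y)=j$ whenever $f(y)=i_j$. First I would compute the label set of $\swap(f)$: since $\swap(f)(y)=q+1-f(\kappa(y))$ and $\kappa$ is a bijection on $P$, the values taken by $\swap(f)$ are exactly $\{q+1-i_1,\ldots,q+1-i_r\}$, which, read in increasing order, are $q+1-i_r < q+1-i_{r-1} < \cdots < q+1-i_1$. Hence the label $q+1-i_j$ occupies the $(r-j+1)$-st position in the sorted list, which means $\overline{\swap(f)}(y)=r-j+1$ whenever $\swap(f)(y)=q+1-i_j$, i.e.\ whenever $f(\kappa(y))=i_j$.

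Next I would translate this condition back through deflation: $f(\kappa(y))=i_j$ is equivalent to $\overline{f}(\kappa(y))=j$. Combining with the previous step,
\[
\overline{\swap(f)}(y)=r+1-\overline{f}(\kappa(y))
\]
for every $y\in P$. On the other hand, applying $\swap$ directly to the $r$-packed labeling $\overline{f}\in\Inc^r(P)$ gives
\[
\swap(\overline{f})(y)=r+1-\overline{f}(\kappa(y)),
\]
by definition (with $q$ replaced by $r$). The two formulas agree, which yields the claim.

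The proof is really just careful bookkeeping; the only slightly subtle point is the index reversal (labels of $\swap(f)$ are obtained by reversing as well as complementing, so the $j$-th smallest label of $f$ corresponds to the $(r-j+1)$-st smallest label of $\swap(f)$), which is the place where a careless computation could introduce an off-by-one error. I would also note in passing that this is consistent with \Cref{lem:SwapOnContent}, since reversing the binary content word is exactly what produces this index swap at the level of labels.
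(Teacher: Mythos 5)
Your proof is correct and follows essentially the same route as the paper's: both arguments track where the $j$-th smallest label of $f$ lands after complementation and reversal, concluding that $\overline{\swap(f)}(x)=r+1-\overline{f}(\kappa(x))=\swap(\overline{f})(x)$. Your version just makes the index bookkeeping more explicit; no gap.
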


\begin{proof}
Let $x \in P$ and $\overline{f}(\kappa(x)) = i$. Suppose that $\overline{f} \in \Inc^r(P)$. Then, by definition, $\swap(\overline{f})(x) = r+1-i$. 

Since $\overline{f} \in \Inc^r(P)$, we know that there are $r$ distinct labels used by $f$, and that $\kappa(x)$ is labeled with the $i$-th smallest label. Equivalently, $\swap(f)$ labels $x$  with the $i$-th largest number amongst the labels in $\swap(f)$. Therefore, $\overline{\swap(f)}(x) = r+1-i$ as well.
\end{proof}

\begin{example}\label{ex:SwapThings}
We demonstrate  Lemma \ref{lem:SwapAndDeflation} for the increasing labeling $f$ as in \Cref{ex:FirstExOfIncrLabeling}.

\begin{center}
\begin{tikzpicture}[scale=1.2]
\node(a) at (0.5,0){$1$};
\node(b) at (1.5,0){$1$};
\node(c) at (2.5,0){$2$};
\node(d) at (0,1){$4$};
\node(e) at (1,1){$6$};
\node(f) at (2,1){$4$};
\node(g) at (3,1){$3$};
\node(h) at (0.5,2){$8$};
\node(i) at (1.5,2){$9$};
\node(j) at (2.5,2){$8$};
\draw(a) -- (d);
\draw(a) -- (f);
\draw(b) -- (d);
\draw(b) -- (e);
\draw(b) -- (f);
\draw(b) -- (g);
\draw(c) -- (e);
\draw(c) -- (g);
\draw(d) -- (h);
\draw(e) -- (h);
\draw(f) -- (j);
\draw(g) -- (j);
\draw(d) -- (i);
\draw(e) -- (i);
\draw(f) -- (i);
\draw(g) -- (i);
\node() at (4,1){$\xrightarrow[]{\swap}$};
\node() at (1.5,-1){$\xdownarrow{.45cm}$};
\node[scale=0.8] at (2,-1){defl};
\node(a1) at (5.5,0){$2$};
\node(b1) at (6.5,0){$1$};
\node(c1) at (7.5,0){$2$};
\node(d1) at (5,1){$6$};
\node(e1) at (6,1){$6$};
\node(f1) at (7,1){$4$};
\node(g1) at (8,1){$7$};
\node(h1) at (5.5,2){$9$};
\node(i1) at (6.5,2){$9$};
\node(j1) at (7.5,2){$8$};
\draw(a1) -- (d1);
\draw(a1) -- (f1);
\draw(b1) -- (d1);
\draw(b1) -- (e1);
\draw(b1) -- (f1);
\draw(b1) -- (g1);
\draw(c1) -- (e1);
\draw(c1) -- (g1);
\draw(d1) -- (h1);
\draw(e1) -- (h1);
\draw(f1) -- (j1);
\draw(g1) -- (j1);
\draw(d1) -- (i1);
\draw(e1) -- (i1);
\draw(f1) -- (i1);
\draw(g1) -- (i1);
\node() at (6.5,-1){$\xdownarrow{.45cm}$};
\node[scale=0.8] at (7,-1){defl};
\node(a2) at (0.5,-4){$1$};
\node(b2) at (1.5,-4){$1$};
\node(c2) at (2.5,-4){$2$};
\node(d2) at (0,-3){$4$};
\node(e2) at (1,-3){$5$};
\node(f2) at (2,-3){$4$};
\node(g2) at (3,-3){$3$};
\node(h2) at (0.5,-2){$6$};
\node(i2) at (1.5,-2){$7$};
\node(j2) at (2.5,-2){$6$};
\draw(a2) -- (d2);
\draw(a2) -- (f2);
\draw(b2) -- (d2);
\draw(b2) -- (e2);
\draw(b2) -- (f2);
\draw(b2) -- (g2);
\draw(c2) -- (e2);
\draw(c2) -- (g2);
\draw(d2) -- (h2);
\draw(e2) -- (h2);
\draw(f2) -- (j2);
\draw(g2) -- (j2);
\draw(d2) -- (i2);
\draw(e2) -- (i2);
\draw(f2) -- (i2);
\draw(g2) -- (i2);
\node() at (4,-3){$\xrightarrow[]{\swap}$};
\node(a3) at (5.5,-4){$2$};
\node(b3) at (6.5,-4){$1$};
\node(c3) at (7.5,-4){$2$};
\node(d3) at (5,-3){$4$};
\node(e3) at (6,-3){$4$};
\node(f3) at (7,-3){$3$};
\node(g3) at (8,-3){$5$};
\node(h3) at (5.5,-2){$7$};
\node(i3) at (6.5,-2){$7$};
\node(j3) at (7.5,-2){$6$};
\draw(a3) -- (d3);
\draw(a3) -- (f3);
\draw(b3) -- (d3);
\draw(b3) -- (e3);
\draw(b3) -- (f3);
\draw(b3) -- (g3);
\draw(c3) -- (e3);
\draw(c3) -- (g3);
\draw(d3) -- (h3);
\draw(e3) -- (h3);
\draw(f3) -- (j3);
\draw(g3) -- (j3);
\draw(d3) -- (i3);
\draw(e3) -- (i3);
\draw(f3) -- (i3);
\draw(g3) -- (i3);
\end{tikzpicture}
 \end{center}

\end{example}

Recall the sliding subposet from \Cref{def:SlidingSubPoset}.
Define $S^{-1}(f)$ to be the sliding subposet of $\Pro^{-1}(f)$. 
The following lemma is analogous to \Cref{lem:AlternateDescriptionSliding} for $S^{-1}$. Its proof is also analogous, so we omit it.

\begin{lemma}\label{lem:AlternateDescriptionInverseSliding}
Given $f \in \Inc^q(P)$, define $S_1^{-1}(f) = \{x \in P: f(x) = q\}$ and for $i > 1$, recursively define $S_i^{-1}(f) = \{x \in P: \exists y \in S_1(f) \cup \cdots \cup S_{i-1}(f) \text{ such that } y \gtrdot x \text{ and } f(x) = \max_{y \gtrdot z}\{f(z)\}\}$. Then $S^{-1}(f)$ is the induced poset of $P$ on $\bigcup_{i=1}^q S_i^{-1}(f)$.
\end{lemma}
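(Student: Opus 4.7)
The plan is to mirror the proof of \Cref{lem:AlternateDescriptionSliding}, interchanging minimal with maximal elements of the sliding subposet and replacing minima over covers of $y$ with maxima over covered elements of $y$. Write $g = \Pro^{-1}(f)$ so that $S^{-1}(f) = S(g)$, and set $T(f) = \bigcup_{i=1}^q S_i^{-1}(f)$. The first step is to identify the set $\{x \in P : f(x) = q\}$ with the maximal layer of $S(g)$: any maximal element of $S(g)$ holds a $\boxempty$ at the end of the jeu de taquin stage of $\Pro(g)$, is relabeled $q+1$, and hence becomes $q$ under $\Pro$; conversely $f(x) = q$ forces $\jdt(g)(x) = q+1$, which since $g$ is $[q]$-valued can only arise from the final $\boxempty \to q+1$ substitution, so $x$ must have been $\boxempty$ at the end of jdt and hence maximal in $S(g)$. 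This establishes the base case of the recursion.

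To show $S(g) \subseteq T(f)$, I would induct on the number $k$ of elements in the longest saturated chain in $S(g)$ from $x$ up to a maximal element of $S(g)$. The case $k = 1$ is handled above. For $k > 1$, pick $y \gtrdot x$ in $S(g)$ such that the hole at $x$ slides up into $y$ during the promotion of $g$; by induction $y \in T(f)$, so it suffices to verify $f(x) = \max_{z \lessdot y} f(z)$. The slide happens at step $\sigma_{g(y)}$, placing the label $g(y)$ at $x$ permanently (later steps $\sigma_j$ with $j > g(y)$ cannot disturb $x$, since $x$'s current label is $g(y) \neq j$ and cannot become $\boxempty$), giving $f(x) = g(y) - 1$. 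For any other $z \lessdot y$, strict increasingness gives $g(z) < g(y)$; a case split on whether $z \in S(g)$---using that $z$ cannot be maximal in $S(g)$ because $y \in S(g)$ lies above it, and that when $z \in S(g)$ is non-maximal the hole at $z$ first slides up to a cover with $g$-value at most $g(y)$---yields $f(z) \leq g(y) - 1 = f(x)$ in every case. The reverse inclusion $T(f) \subseteq S(g)$ follows from a dual induction on the layer index $i$: given $x \in S_i^{-1}(f)$ witnessed by $y \in \bigcup_{j<i} S_j^{-1}(f) \subseteq S(g)$ with $f(x) = \max_{z \lessdot y} f(z)$, the max condition combined with $f(z) = g(y) - 1$ for each \emph{source} $z$ (covered element of $y$ holding $\boxempty$ at step $\sigma_{g(y)}$) pins $x$ down as such a source and hence in $S(g)$.

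The main obstacle is the supporting formula $\Pro(g)(y') = \min_{w \gtrdot y'} g(w) - 1$ for non-maximal $y' \in S(g)$, used implicitly throughout the comparisons. Verifying it amounts to checking that (i) once a hole at $y'$ first slides up into a cover $w$ at step $\sigma_{g(w)}$, the value $g(w)$ deposited at $y'$ persists through all later slides, because $y'$'s current label $g(w)$ is strictly less than every subsequent step index $j > g(w)$ and so $y'$ cannot revert to $\boxempty$; and (ii) the cover $w$ first receiving the hole from $y'$ satisfies $g(w) = \min_{u \gtrdot y'} g(u)$, since the $\sigma_i$ are applied in increasing order of $i$ and each cover's label equals its $g$-value until the slide that first disturbs it. Both points reduce to strict increasingness of $g$ along covers, combined with the order in which the slides are processed.
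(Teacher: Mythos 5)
Your proof is correct. The paper omits the proof of this lemma entirely, saying only that it is analogous to that of \Cref{lem:AlternateDescriptionSliding}, and your argument is exactly that dual induction (on saturated chains in the sliding subposet running up to its maximal elements), carried out carefully through the identity $f = \Pro(g)$ for $g = \Pro^{-1}(f)$; the supporting jeu de taquin facts you invoke — that a label deposited at $y'$ by an upward slide persists, and that the hole at a non-maximal $y' \in S(g)$ moves to the cover realizing $\min_{w \gtrdot y'} g(w)$, so that $\Pro(g)(y') = \min_{w \gtrdot y'} g(w) - 1$ — all check out and match the unproved bullet points in the paper's proof of \Cref{prop:ProRestricts}.
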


The following is the key technical result to show that $\swap$ and $\Pro$ anticommute.

\begin{lemma}\label{lem:SlidingAndSwap}
Let $f \in \Inc^q(P)$. Then, $S(\swap(f)) = \kappa(S^{-1}(f))$. 
\end{lemma}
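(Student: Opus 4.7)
The plan is to use the recursive, layer-by-layer characterizations of the sliding subposets $S$ and $S^{-1}$ from \Cref{lem:AlternateDescriptionSliding} and \Cref{lem:AlternateDescriptionInverseSliding}, and show by induction on $i$ that
\[
S_i(\swap(f)) \;=\; \kappa\bigl(S_i^{-1}(f)\bigr).
\]
Taking the union over $i$ will give the desired equality. The key mechanical facts I will use throughout are: (1) because $\kappa$ is order-reversing and an involution, it takes cover relations to cover relations in the opposite direction, i.e., $y \lessdot x$ in $P$ if and only if $\kappa(x) \lessdot \kappa(y)$; and (2) by definition $\swap(f)(x) = q+1 - f(\kappa(x))$, so $\swap$ turns minima of labels over some set of poset elements into maxima of $f$-labels over the $\kappa$-image of that set.

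For the base case, I would compute
\[
S_1(\swap(f)) = \{x : \swap(f)(x) = 1\} = \{x : f(\kappa(x)) = q\} = \kappa\bigl(S_1^{-1}(f)\bigr),
\]
where the last equality uses that $\kappa$ is an involution. For the inductive step, assume $S_j(\swap(f)) = \kappa(S_j^{-1}(f))$ for all $j < i$. An element $x$ lies in $S_i(\swap(f))$ iff there exists $y \in \bigcup_{j<i} S_j(\swap(f))$ with $y \lessdot x$ and $\swap(f)(x) = \min_{z \gtrdot y} \swap(f)(z)$. Writing $y = \kappa(y')$ for some $y' \in \bigcup_{j<i} S_j^{-1}(f)$, the cover relation $y \lessdot x$ becomes $\kappa(x) \lessdot y'$, and the minimum condition translates to
\[
q+1-f(\kappa(x)) \;=\; \min_{z \gtrdot y}\bigl(q+1 - f(\kappa(z))\bigr) \;=\; q+1 - \max_{w \lessdot y'} f(w),
\]
where I reindexed the minimum using $w = \kappa(z)$ and the cover-reversing property of $\kappa$. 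Cancelling and setting $x' = \kappa(x)$, this is exactly the statement that $x' \lessdot y'$ and $f(x') = \max_{w \lessdot y'} f(w)$, which by \Cref{lem:AlternateDescriptionInverseSliding} is the condition for $x' \in S_i^{-1}(f)$. Hence $x \in \kappa(S_i^{-1}(f))$, and the reverse inclusion follows by running the same argument backward.

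I expect the main obstacle to be purely bookkeeping: keeping the four simultaneous reversals (the involution $\kappa$ on elements, cover relations flipping under $\kappa$, the min/max swap coming from $q+1-(\cdot)$, and the shift from layer $j$ of $S(\swap(f))$ to layer $j$ of $S^{-1}(f)$) straight. Once the base case and one clean inductive step are written, the equality $S(\swap(f)) = \bigcup_i S_i(\swap(f)) = \kappa\bigl(\bigcup_i S_i^{-1}(f)\bigr) = \kappa(S^{-1}(f))$ follows immediately.
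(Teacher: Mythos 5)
Your proposal is correct and follows essentially the same route as the paper's proof: both induct on the layers $S_i$ using the characterizations in \Cref{lem:AlternateDescriptionSliding} and \Cref{lem:AlternateDescriptionInverseSliding}, with the identical base case and the same translation of the minimum condition into a maximum via $\swap(f)(x) = q+1-f(\kappa(x))$ and the cover-reversing property of $\kappa$. The only difference is notational (your $x', y'$ versus the paper's $\kappa(x), \kappa(y)$).
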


\begin{proof}
We will first prove a stronger statement, namely, for all $1 \leq i \leq q$, $S_i(\swap(f)) = \kappa(S^{-1}_i(f))$. The lemma then follows by \Cref{lem:AlternateDescriptionSliding} and \Cref{lem:AlternateDescriptionInverseSliding}.

First, we set $i = 1$. Following the definitions, \[
x \in S_1(\swap(f)) \iff \swap(f)(x) = 1 \iff f(\kappa(x)) = q \iff \kappa(x) \in S_1^{-1}(\swap(f)). 
\]
Since $\kappa$ is an involution,  $\kappa(x) \in S^{-1}(f)$  if and only if $x \in \kappa(S^{-1}(f))$. 

Now, let $i > 1$ and assume we have shown the claim for all $j < i$. 
We have $x \in S_i(\swap(f))$ if and only if there exists $y \lessdot x$ such that $y \in \cup_{j =1}^{i-1} S(\swap(f))$ and $\swap(f)(x) = \min_{y \lessdot z} \swap(f)(z) = \min_{y \lessdot z} q+1 - f(\kappa(z)) = q+1 - \max_{y \lessdot z} f(\kappa(z))$.
By definition of $\swap$, we see that this equality is equivalent to \[
f(\kappa(x)) = \max_{y \lessdot z} f(\kappa(z)) = \max_{y \lessdot \kappa(z)} f(z) = \max_{\kappa(y) \gtrdot z}f(z)
\] where in the second equality we swap the roles of $z$ and $\kappa(z)$ and in the third we we use the fact that  the set of $\kappa(z)$ covering $y$ is in bijection with the set of $z$ covered by $\kappa(y)$ since $P$ is self-dual and $\kappa$ is an involution. 

At this point we have shown  $x \in S_i(\swap(f))$ if and only if there exists $y \lessdot x$ such that $y \in \cup_{j =1}^{i-1} S(\swap(f))$ and $f(\kappa(x)) = \max_{\kappa(y) \gtrdot z}f(z)$.
By our inductive hypothesis, $y \in \cup_{j=1}^{i-1} \kappa(S^{-1}_j(f))$, and equivalently $\kappa(y) \in \cup_{j=1}^{i-1} S^{-1}_j(f)$. Therefore, since $\kappa(y) \gtrdot \kappa(x)$, if $f(\kappa(x)) = \max_{\kappa(y) \gtrdot z}f(z)$, then $\kappa(x) \in S^{-1}_i(f)$. In conclusion, we have shown that $x \in S_i(\swap(f))$ if and only if $\kappa(x) \in S^{-1}_i(f)$, which is true if and only if $x \in \kappa(S^{-1}_i(f))$.
\end{proof}

\begin{proposition}\label{thm:SwapAndPro}
Given $f \in \Inc^q(P)$, $\swap(\Pro^{-1}(f)) =\Pro(\swap(f))$. 
\end{proposition}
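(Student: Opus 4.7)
The plan is to prove the identity elementwise: for each $x\in P$, compute $\Pro(\swap(f))(x)$ using the jeu-de-taquin description from \Cref{def:ProJDT} and compute $\swap(\Pro^{-1}(f))(x)=q+1-\Pro^{-1}(f)(\kappa(x))$ using an analogous description of $\Pro^{-1}$, then verify they agree. The key structural input is \Cref{lem:SlidingAndSwap}, which identifies the sliding subposet $S(\swap(f))$ with $\kappa(S^{-1}(f))$.

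The first step is to record an elementwise formula for $\Pro^{-1}$ that is dual to the case analysis used in the proof of \Cref{prop:ProRestricts}. Running the jeu-de-taquin procedure in reverse, so that empty boxes slide down rather than up, gives: if $x\notin S^{-1}(f)$ then $\Pro^{-1}(f)(x)=f(x)+1$; if $x\in S^{-1}(f)$ and $x$ is minimal in $P$ then $\Pro^{-1}(f)(x)=1$; and if $x\in S^{-1}(f)$ and $x$ is not minimal then $\Pro^{-1}(f)(x)=\max_{w\lessdot x}f(w)+1$. This is the formal dual of \Cref{lem:AlternateDescriptionSliding} and follows the same inductive argument, with $S^{-1}(f)$ characterized as in \Cref{lem:AlternateDescriptionInverseSliding}.

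The second step is a three-way case analysis on $x\in P$. By \Cref{lem:SlidingAndSwap} we have $x\in S(\swap(f))$ if and only if $\kappa(x)\in S^{-1}(f)$, and since $\kappa$ is an order-reversing involution, $x$ is maximal in $P$ if and only if $\kappa(x)$ is minimal. Thus the cases for computing $\Pro(\swap(f))(x)$ align exactly with the cases for computing $\Pro^{-1}(f)(\kappa(x))$. In the non-sliding case, both sides simplify to $q-f(\kappa(x))$. When $x\in S(\swap(f))$ is maximal, both sides equal $q$. In the remaining case one computes
\[
\Pro(\swap(f))(x) \;=\; \min_{z\gtrdot x}\swap(f)(z)-1 \;=\; q - \max_{z\gtrdot x} f(\kappa(z)),
\]
and the bijection $z\mapsto \kappa(z)$ from $\{z:z\gtrdot x\}$ to $\{w:w\lessdot \kappa(x)\}$, supplied by $\kappa$ being an order-reversing involution on the self-dual poset $P$, rewrites this as $q-\max_{w\lessdot \kappa(x)}f(w)$. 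This matches $q+1-\Pro^{-1}(f)(\kappa(x))=\swap(\Pro^{-1}(f))(x)$ by the nonminimal case of the $\Pro^{-1}$ formula.

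The main obstacle is bookkeeping rather than conceptual: one must keep straight the off-by-one shifts coming from $\swap$ mapping $a\mapsto q+1-a$, $\Pro$ subtracting one, and $\Pro^{-1}$ adding one, while simultaneously converting minima over covers of $x$ into maxima over coverees of $\kappa(x)$ via $\kappa$. Once \Cref{lem:SlidingAndSwap} and the elementwise $\Pro^{-1}$ formula are in place, the verification is routine.
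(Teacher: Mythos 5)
Your proposal is correct and follows essentially the same route as the paper's proof: both rely on \Cref{lem:SlidingAndSwap} to match the case $x\in S(\swap(f))$ with $\kappa(x)\in S^{-1}(f)$, then verify the identity pointwise via the same three-way case analysis (non-sliding, maximal sliding, non-maximal sliding), with identical arithmetic in each case. The only cosmetic difference is that you state the elementwise formula for $\Pro^{-1}$ explicitly as a preliminary step, whereas the paper uses it implicitly inside the case analysis.
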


\begin{proof}
Let $x \in P$. There are three possibilities for $x$: $x \in S(\swap(f))$ and $x$ is maximal, $x \in S(\swap(f))$ and $x$ is not maximal, or $x \notin S(\swap(f))$. For each case, we show $\Pro(\swap(f))(x) = \swap(\Pro^{-1}(f))(x)$.

First, let $x \in S(\swap(f))$ and suppose $x$ is maximal. Then, $\Pro(\swap(f))(x) = q$. If $x \in S(\swap(f))$ then, by \Cref{lem:SlidingAndSwap}, $\kappa(x) \in S^{-1}(f)$, and if $x$ is maximal then $\kappa(x)$ is minimal. Therefore, $\Pro^{-1}(f)(\kappa(x)) = 1$ and by definition $\swap(\Pro^{-1})(f)(x) = q+1-\Pro^{-1}(f)(\kappa(x)) = q = \Pro(\swap(f))(x)$.

Next, if $x \in S(\swap(f))$ and $x$ is not maximal, then $\Pro(\swap(f))(x) = \min_{z \gtrdot x} \swap(f)(z) - 1 = q - \max_{z \gtrdot x} f(\kappa(z)) = q - \max_{z \lessdot \kappa(x)} f(z)$. From \Cref{lem:SlidingAndSwap}, we know that if $x \in S(\swap(f))$ and $x$ is not maximal, then $\kappa(x) \in S^{-1}(f)$ and $\kappa(x)$ is not minimal. Therefore, $\swap(\Pro^{-1}(f))(x) = q+1-\Pro^{-1}(f)(\kappa(x)) = q +1 - (\max_{z \lessdot \kappa(x)} f(z) + 1) = \Pro(\swap(f))(x)$ as desired.

Finally, suppose $x \notin S(\swap(f))$, so that equivalently $x \notin \kappa(S^{-1}(f))$ by \Cref{lem:SlidingAndSwap} and $\kappa(x) \notin S^{-1}(f)$. We know $\Pro(\swap(f))(x) = \swap(f)(x) - 1 = q + 1 - f(\kappa(x)) - 1= q+1-(f(\kappa(x)) + 1) = q+1-\Pro^{-1}(f)(\kappa(x)) = \swap(\Pro^{-1}(f))(x)$.

We have shown $\Pro(\swap(f))(x) = \swap(\Pro^{-1}(f))(x)$ for each $x \in P$, so that these are the same labelings of $P$.
\end{proof}

\begin{example}
 We demonstrate \Cref{thm:SwapAndPro} for the running example of an increasing labeling $f$, as given in \Cref{ex:FirstExOfIncrLabeling}. 

\begin{center}
\begin{tikzpicture}[scale=1.2]
\node(a) at (0.5,0){$1$};
\node(b) at (1.5,0){$1$};
\node(c) at (2.5,0){$2$};
\node(d) at (0,1){$4$};
\node(e) at (1,1){$6$};
\node(f) at (2,1){$4$};
\node(g) at (3,1){$3$};
\node(h) at (0.5,2){$8$};
\node(i) at (1.5,2){$9$};
\node(j) at (2.5,2){$8$};
\draw(a) -- (d);
\draw(a) -- (f);
\draw(b) -- (d);
\draw(b) -- (e);
\draw(b) -- (f);
\draw(b) -- (g);
\draw(c) -- (e);
\draw(c) -- (g);
\draw(d) -- (h);
\draw(e) -- (h);
\draw(f) -- (j);
\draw(g) -- (j);
\draw(d) -- (i);
\draw(e) -- (i);
\draw(f) -- (i);
\draw(g) -- (i);
\node() at (4,1){$\xrightarrow[]{\Pro^{-1}}$};
\node() at (1.5,-1){$\xdownarrow{.45cm}$};
\node[scale=0.8] at (2,-1){$\swap$};
\node(a1) at (5.5,0){$2$};
\node(b1) at (6.5,0){$2$};
\node(c1) at (7.5,0){$1$};
\node(d1) at (5,1){$5$};
\node(e1) at (6,1){$3$};
\node(f1) at (7,1){$5$};
\node(g1) at (8,1){$4$};
\node(h1) at (5.5,2){$9$};
\node(i1) at (6.5,2){$7$};
\node(j1) at (7.5,2){$9$};
\draw(a1) -- (d1);
\draw(a1) -- (f1);
\draw(b1) -- (d1);
\draw(b1) -- (e1);
\draw(b1) -- (f1);
\draw(b1) -- (g1);
\draw(c1) -- (e1);
\draw(c1) -- (g1);
\draw(d1) -- (h1);
\draw(e1) -- (h1);
\draw(f1) -- (j1);
\draw(g1) -- (j1);
\draw(d1) -- (i1);
\draw(e1) -- (i1);
\draw(f1) -- (i1);
\draw(g1) -- (i1);
\node() at (6.5,-1){$\xdownarrow{.45cm}$};
\node[scale=0.8] at (7,-1){$\swap$};
\node(a2) at (0.5,-4){$2$};
\node(b2) at (1.5,-4){$1$};
\node(c2) at (2.5,-4){$2$};
\node(d2) at (0,-3){$6$};
\node(e2) at (1,-3){$6$};
\node(f2) at (2,-3){$4$};
\node(g2) at (3,-3){$7$};
\node(h2) at (0.5,-2){$9$};
\node(i2) at (1.5,-2){$9$};
\node(j2) at (2.5,-2){$8$};
\draw(a2) -- (d2);
\draw(a2) -- (f2);
\draw(b2) -- (d2);
\draw(b2) -- (e2);
\draw(b2) -- (f2);
\draw(b2) -- (g2);
\draw(c2) -- (e2);
\draw(c2) -- (g2);
\draw(d2) -- (h2);
\draw(e2) -- (h2);
\draw(f2) -- (j2);
\draw(g2) -- (j2);
\draw(d2) -- (i2);
\draw(e2) -- (i2);
\draw(f2) -- (i2);
\draw(g2) -- (i2);
\node() at (4,-3){$\xrightarrow[]{\Pro}$};
\node(a3) at (5.5,-4){$1$};
\node(b3) at (6.5,-4){$3$};
\node(c3) at (7.5,-4){$1$};
\node(d3) at (5,-3){$5$};
\node(e3) at (6,-3){$5$};
\node(f3) at (7,-3){$7$};
\node(g3) at (8,-3){$6$};
\node(h3) at (5.5,-2){$8$};
\node(i3) at (6.5,-2){$8$};
\node(j3) at (7.5,-2){$9$};
\draw(a3) -- (d3);
\draw(a3) -- (f3);
\draw(b3) -- (d3);
\draw(b3) -- (e3);
\draw(b3) -- (f3);
\draw(b3) -- (g3);
\draw(c3) -- (e3);
\draw(c3) -- (g3);
\draw(d3) -- (h3);
\draw(e3) -- (h3);
\draw(f3) -- (j3);
\draw(g3) -- (j3);
\draw(d3) -- (i3);
\draw(e3) -- (i3);
\draw(f3) -- (i3);
\draw(g3) -- (i3);
\end{tikzpicture}
 \end{center}
\end{example}

\begin{corollary}\label{cor:TwoTypesOfOrbits}
For each orbit $\mathcal{O}$ of $\Pro$ on $\Inc^q(P)$, there exists an orbit $\mathcal{O}'$ such that for all $f \in \mathcal{O}$, $\swap(f) \in \mathcal{O}'$. 
\end{corollary}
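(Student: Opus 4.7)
The plan is to leverage \Cref{thm:SwapAndPro}, which asserts that $\swap \circ \Pro^{-1} = \Pro \circ \swap$, or equivalently (applying $\Pro^{-1}$ on both sides on the left and $\Pro$ on the right) that $\Pro^{-1} \circ \swap = \swap \circ \Pro$. This anticommutation relation is the only real input needed.

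First I would fix $f \in \mathcal{O}$ and define $\mathcal{O}'$ to be the promotion orbit containing $\swap(f)$. The goal is then to show that every element $g \in \mathcal{O}$ satisfies $\swap(g) \in \mathcal{O}'$. Since $\mathcal{O}$ is a promotion orbit, any $g \in \mathcal{O}$ can be written as $g = \Pro^k(f)$ for some integer $k$.

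Next I would establish, by induction on $|k|$, the identity
\[
\swap \circ \Pro^k = \Pro^{-k} \circ \swap
\]
for all integers $k$. The base case $k = 0$ is trivial and the case $k = 1$ (or $k = -1$) is exactly \Cref{thm:SwapAndPro}. For the inductive step, assuming the identity for $k$, we compute
\[
\swap \circ \Pro^{k+1} = (\swap \circ \Pro) \circ \Pro^k = (\Pro^{-1} \circ \swap) \circ \Pro^k = \Pro^{-1} \circ \Pro^{-k} \circ \swap = \Pro^{-(k+1)} \circ \swap,
\]
and similarly for negative indices.

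Applying this to our $g = \Pro^k(f)$ gives $\swap(g) = \Pro^{-k}(\swap(f))$, which lies in the promotion orbit of $\swap(f)$, namely $\mathcal{O}'$. Thus $\swap(\mathcal{O}) \subseteq \mathcal{O}'$, which is the desired conclusion. I do not anticipate any serious obstacle here; the entire content of the corollary is packaged in \Cref{thm:SwapAndPro}, and the proof is essentially a one-line iteration of that anticommutation.
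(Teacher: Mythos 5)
Your proposal is correct and follows essentially the same route as the paper: both fix $f$, set $\mathcal{O}'$ to be the orbit of $\swap(f)$, and iterate \Cref{thm:SwapAndPro} to obtain $\swap(\Pro^{k}(f)) = \Pro^{-k}(\swap(f))$. The only difference is that you spell out the induction that the paper compresses into ``by repeated application.''
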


\begin{proof}
 Pick $f \in \mathcal{O}$ and let $\mathcal{O}'$ be such that $\swap(f) \in \mathcal{O}'$. Let $g \in \mathcal{O}$. Then, $g = \Pro^{n}(f)$ for some $n \geq 0$. By repeated application of \Cref{thm:SwapAndPro}, we can show \[
\swap(g) = \swap(\Pro^{n}(f)) = \Pro^{-n}(\swap(f))
\]
and conclude that $\swap(g) \in \mathcal{O}'$.
\end{proof}

 The previous result invites the following notation. Given a promotion orbit $\mathcal{O}$, let $\swap(\mathcal{O})$ denote the unique promotion orbit such that all $f \in \mathcal{O}$ have $\swap(f) \in \mathcal{O}'$. We will say an orbit $\mathcal{O}$ is \emph{swap-closed} if $\swap(\mathcal{O}) = \mathcal{O}$.

We will now show $\swap$ is homomesic with respect to the antipodal sum and total sum statistics.

\begin{definition}\label{def:stats}
    Let $P$ be a self-dual fence poset and $\kappa:P\to P$ be an order-reversing involution. Given a labeling $f$ on $P$ and an element $x\in P$, we define the \emph{antipodal sum statistic with respect to $x$}: $\as_x(f)=f(x)+f(\kappa(x))$.  We also define the \emph{total sum statistic} $\tot(f)=\sum_{x\in P} f(x)$.
\end{definition}

\begin{proposition}\label{prop:swap-homomesy-antipodal}
The action of $\swap$ on $\Inc^q(P)$ is homomesic with respect to the antipodal sum statistic $\as_x$ for all $x\in P$.
\end{proposition}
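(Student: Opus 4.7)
The plan is to observe that $\swap$ is an involution on $\Inc^q(P)$ and to show that for every $f \in \Inc^q(P)$ and every $x \in P$, the sum $\as_x(f) + \as_x(\swap(f))$ equals $2(q+1)$. This pointwise identity immediately gives the homomesy with constant value $q+1$.

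First I would verify that $\swap$ is an involution: for any $f$ and $x$,
\[
\swap(\swap(f))(x) = q+1 - \swap(f)(\kappa(x)) = q+1 - \bigl(q+1 - f(\kappa(\kappa(x)))\bigr) = f(x),
\]
using that $\kappa$ is an involution. Consequently every $\swap$-orbit has size $1$ or $2$.

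Next, for any $f \in \Inc^q(P)$ and any $x \in P$, I would directly compute
\[
\as_x(f) + \as_x(\swap(f)) = f(x) + f(\kappa(x)) + \swap(f)(x) + \swap(f)(\kappa(x)),
\]
and then substitute the definition of $\swap$ to obtain
\[
\swap(f)(x) + \swap(f)(\kappa(x)) = \bigl(q+1 - f(\kappa(x))\bigr) + \bigl(q+1 - f(x)\bigr).
\]
Adding gives $\as_x(f) + \as_x(\swap(f)) = 2(q+1)$. For a $2$-element orbit $\{f,\swap(f)\}$ the average of $\as_x$ is therefore $q+1$, and for a fixed point $f = \swap(f)$ the same identity forces $\as_x(f) = q+1$.

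Finally, since $\Inc^q(P)$ partitions into $\swap$-orbits on each of which the average of $\as_x$ is $q+1$, the global average of $\as_x$ on $\Inc^q(P)$ is also $q+1$. Thus every $\swap$-orbit has the same average statistic value as the global average, which is precisely the definition of homomesy. There is no real obstacle here; the computation is completely algebraic once one notices that $\swap$ sends $f(x)$ and $f(\kappa(x))$ to their ``complements'' $q+1-f(\kappa(x))$ and $q+1-f(x)$, so that summing over the two members of a $\swap$-orbit cancels all $f$-dependence.
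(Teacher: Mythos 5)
Your proposal is correct and follows essentially the same route as the paper: observe that $\swap$ is an involution so its orbits have size $1$ or $2$, compute $\as_x(f)+\as_x(\swap(f))=2(q+1)$ for two-element orbits, and handle fixed points via the same cancellation. The only cosmetic difference is that the paper derives the fixed-point case from $f(x)=q+1-f(\kappa(x))$ directly rather than from the summed identity, which is an equivalent calculation.
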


\begin{proof}
We show that every orbit of swap has the same average value, $q+1$.
Observe that orbits are of either size one or two.
When an orbit of $f\in\Inc^q(P)$ has size two, then 
\begin{align*}
\as_x(f) + \as_x(\swap(f))&= f(x) + f(\kappa(x)) + (q+1-f(\kappa(x))) + (q+1 - f(x)) \\
&= 2(q+1).
\end{align*}
Thus, the average value of the orbit is $q+1$.

When the orbit of $f\in\Inc^q(P)$ has size one, this means that $f(x) = q+1 - f(\kappa(x))$, thus $\as_x(f) = f(x) + f(\kappa(x)) = q+1 - f(\kappa(x)) + f(\kappa(x)) = q+1$, as desired.
\end{proof}

\begin{corollary}\label{prop:swap-homomesy-total}
Let $P$ be a self-dual poset, and $f\in \Inc^q(P)$.
The total statistic $\tot(f)$ is homomesic with respect to the action of swap.
\end{corollary}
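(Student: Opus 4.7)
The plan is to derive this as an immediate consequence of \Cref{prop:swap-homomesy-antipodal}. The key observation is that the total sum statistic can be written as a linear combination of the antipodal sum statistics. Specifically, because $\kappa$ is an involution on $P$,
\[
\sum_{x\in P}\as_x(f) \;=\; \sum_{x\in P}\bigl(f(x)+f(\kappa(x))\bigr) \;=\; 2\sum_{x\in P}f(x) \;=\; 2\tot(f),
\]
so that $\tot(f)=\tfrac{1}{2}\sum_{x\in P}\as_x(f)$. Averaging this identity over an arbitrary swap-orbit $\mathcal{O}$ and applying \Cref{prop:swap-homomesy-antipodal} to each of the $|P|$ summands (each of which has orbit-average $q+1$) gives
\[
\frac{1}{|\mathcal{O}|}\sum_{f\in\mathcal{O}}\tot(f) \;=\; \frac{1}{2}\sum_{x\in P}\frac{1}{|\mathcal{O}|}\sum_{f\in\mathcal{O}}\as_x(f) \;=\; \frac{|P|(q+1)}{2},
\]
which is independent of $\mathcal{O}$, proving homomesy.

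There is essentially no obstacle here: the argument is a one-line reduction, because the linearity of averaging converts homomesy of the family $\{\as_x\}_{x\in P}$ into homomesy of any fixed $\mathbb{Z}$-linear combination. If desired, one could instead give a direct two-case verification by noting that swap-orbits have size one or two: in the size-two case $\tot(f)+\tot(\swap(f))=\sum_{x\in P}f(x)+\sum_{x\in P}(q+1-f(\kappa(x)))=|P|(q+1)$ since $\kappa$ is a bijection on $P$, and in the size-one case $f(x)=q+1-f(\kappa(x))$ forces $2\tot(f)=|P|(q+1)$ by the same bijectivity. Either approach yields the stated orbit-average $\tfrac{|P|(q+1)}{2}$, matching the global average of $\tot$ on $\Inc^q(P)$.
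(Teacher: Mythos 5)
Your proof is correct and takes essentially the same route as the paper, which simply declares the corollary an immediate consequence of \Cref{prop:swap-homomesy-antipodal}; your identity $\sum_{x\in P}\as_x(f)=2\tot(f)$ is exactly the linearity argument the paper has in mind (and later states explicitly in the proof of \Cref{lem:GlobalAverage}). The direct two-case verification you sketch is also fine, but nothing more is needed.
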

\begin{proof}
This follows immediately from the previous proposition.
\end{proof}

We now have our first examples of orbitmesy for increasing labeling promotion.

\begin{theorem}
\label{thm:swap_orbitmesy}
Let $P$ be a self-dual poset and let $\mathcal{O}$ be a swap-closed promotion orbit of elements of $\Inc^q(P)$. Then, $\mathcal{O}$ exhibits orbitmesy with respect to the antipodal sum statistic $\mathcal{A}_x$ for all $x \in P$ and with respect to the total sum statistic $\tot$.
\end{theorem}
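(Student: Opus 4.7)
The plan is to observe that this theorem is essentially a direct corollary of \Cref{prop:homomesy-to-orbitmesy} applied with $\phi=\swap$ and $\psi=\Pro$. All of the technical work has already been done: \Cref{prop:swap-homomesy-antipodal} establishes that swap is homomesic with respect to each antipodal sum statistic $\as_x$, and \Cref{prop:swap-homomesy-total} gives the analogous statement for the total sum statistic $\tot$. So the only remaining task is to verify that the hypothesis ``$\phi$-closed'' in \Cref{prop:homomesy-to-orbitmesy} matches the hypothesis ``swap-closed'' on $\mathcal{O}$.

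First I would recall the definition: a promotion orbit $\mathcal{O}$ is swap-closed precisely when $\swap(\mathcal{O})=\mathcal{O}$, which by \Cref{cor:TwoTypesOfOrbits} is equivalent to saying that $\swap(f)\in \mathcal{O}$ for every $f\in \mathcal{O}$. This is exactly the condition of being $\swap$-closed in the sense of \Cref{prop:homomesy-to-orbitmesy}, where we take the set $X=\Inc^q(P)$, the bijection $\phi=\swap$, and the second action $\psi=\Pro$.

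Next I would apply \Cref{prop:homomesy-to-orbitmesy} twice. Once, taking the statistic $\eta=\as_x$ for a fixed but arbitrary $x\in P$: since $(\Inc^q(P),\as_x,\swap)$ is homomesic by \Cref{prop:swap-homomesy-antipodal}, the hypotheses of \Cref{prop:homomesy-to-orbitmesy} are satisfied, and the conclusion is that $\mathcal{O}$ is orbitmesic with respect to $\as_x$. Repeating the argument for $\eta=\tot$, using \Cref{prop:swap-homomesy-total}, yields the orbitmesy with respect to the total sum statistic.

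There is no real obstacle here; the proof is a bookkeeping step that combines the two results above. The substantive content of the theorem lies entirely in the preceding structural results, namely that swap induces a well-defined map on promotion orbits (\Cref{thm:SwapAndPro} and \Cref{cor:TwoTypesOfOrbits}), and that swap is itself homomesic for these two statistics. Once those are in hand, the orbitmesy conclusion follows immediately from the general transfer principle \Cref{prop:homomesy-to-orbitmesy}.
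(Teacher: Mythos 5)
Your proposal is correct and follows exactly the paper's own proof: apply \Cref{prop:homomesy-to-orbitmesy} with $\phi=\swap$ and $\psi=\Pro$, invoking \Cref{prop:swap-homomesy-antipodal} for $\as_x$ and \Cref{prop:swap-homomesy-total} for $\tot$. Nothing further is needed.
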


\begin{proof}
    The statement for $\mathcal{A}_x$ follows from \Cref{prop:swap-homomesy-antipodal} and \Cref{prop:homomesy-to-orbitmesy}.  The statement for $\tot$ follows from \Cref{prop:swap-homomesy-total} and \Cref{prop:homomesy-to-orbitmesy}.
\end{proof}

\begin{corollary}\label{cor:swap-deflation-orbitmesy}
    Let $P$ be a self-dual poset.  Let $f\in\Inc^q(P)$ and let $\mathcal{O}$ be the promotion orbit of $f$ in $\Inc^q(P)$.  Let $r$ be the largest label in $\overline{f}$ and $\overline{\mathcal{O}}$ be the orbit of $\overline{f}\in\Inc^r(P)$.  Suppose the following hold:
    \begin{itemize}
        \item $\rho^i(\Con(f))=\rev(\Con(f))$ for some $i\in[q]$.
        \item $|\mathcal{O}|=|\overline{\mathcal{O}}| \ell$ where $\ell$ is the period of $\Con(f)$.
        \item $\overline{\mathcal{O}}$ is swap-closed.
    \end{itemize}
    Then the orbit $\mathcal{O}$ is orbitmesic with respect to the antipodal sum statistic $\as_x$ for all $x\in P$ and the total sum statistic $\tot$.
\end{corollary}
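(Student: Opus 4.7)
The plan is to reduce to \Cref{thm:swap_orbitmesy}: once we show that $\mathcal{O}$ is swap-closed, both the $\as_x$-orbitmesy and the $\tot$-orbitmesy follow at once from that theorem. Thus the whole task is to produce an integer $k$ with $\Pro^k(f)=\swap(f)$. By \Cref{remark:defl_con}, an increasing labeling is determined by its deflation and its binary content word, so it suffices to find $k$ satisfying both
\[\Con(\Pro^k(f))=\Con(\swap(f))\qquad\text{and}\qquad\overline{\Pro^k(f)}=\overline{\swap(f)}.\]

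For the content-word equation, \Cref{lem:ProRotateContent} together with \Cref{lem:SwapOnContent} rewrites the condition as $\rho^k(\Con(f))=\rev(\Con(f))$; by the first hypothesis and the fact that $\Con(f)$ has period $\ell$, this is equivalent to $k\equiv i\pmod{\ell}$. So I would restrict to $k=i+j\ell$ with $j\in\mathbb{Z}_{\ge 0}$ and use $j$ as the free parameter. For the deflation equation, \Cref{lem:SwapAndDeflation} identifies $\overline{\swap(f)}$ with $\swap(\overline{f})$, and the third hypothesis (swap-closedness of $\overline{\mathcal{O}}$) yields some $m$ with $\Pro^m(\overline{f})=\swap(\overline{f})$. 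Iterating \Cref{prop:ProRestricts} gives $\overline{\Pro^k(f)}=\Pro^{n_k}(\overline{f})$, where $n_k$ counts the $1$'s in cyclic positions $1,\ldots,k$ of $\Con(f)$. Periodicity of $\Con(f)$ forces every window of $\ell$ consecutive cyclic positions to contain the same number $m_0:=r\ell/q$ of ones, from which $n_{i+j\ell}=n_i+jm_0$.

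The remaining point is arithmetic. By \Cref{thm:OrderComparedToDeflation}, the orbit of $f$ has size $\tau\ell/\gcd(m_0,\tau)$ where $\tau=|\overline{\mathcal{O}}|$, so the second hypothesis $|\mathcal{O}|=\tau\ell$ forces $\gcd(m_0,\tau)=1$. Hence the set $\{n_i+jm_0\pmod{\tau}:j\in\mathbb{Z}_{\ge 0}\}$ is all of $\mathbb{Z}/\tau\mathbb{Z}$, and I can choose $j$ with $n_i+jm_0\equiv m\pmod{\tau}$. For the resulting $k=i+j\ell$, both the content word and the deflation of $\Pro^k(f)$ match those of $\swap(f)$, so $\Pro^k(f)=\swap(f)$ and $\mathcal{O}$ is swap-closed; \Cref{thm:swap_orbitmesy} then concludes. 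I expect the most delicate step to be the periodicity identity $n_{i+j\ell}=n_i+jm_0$ combined with the extraction $\gcd(m_0,\tau)=1$ from the orbit-size hypothesis; the rest assembles directly from the previously established lemmas.
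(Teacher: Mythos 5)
Your proof is correct and follows essentially the same route as the paper: establish that $\mathcal{O}$ is swap-closed by producing an element of $\mathcal{O}$ whose content word and deflation match those of $\swap(f)$ (via \Cref{lem:SwapOnContent}, \Cref{lem:SwapAndDeflation}, and \Cref{remark:defl_con}), then invoke \Cref{thm:swap_orbitmesy}. The only difference is that where the paper simply asserts from $|\mathcal{O}|=|\overline{\mathcal{O}}|\ell$ that the orbit realizes every pairing of a deflation in $\overline{\mathcal{O}}$ with a rotation of $\Con(f)$, you supply the supporting arithmetic ($\gcd(r\ell/q,\tau)=1$ from \Cref{thm:OrderComparedToDeflation} plus the count $n_{i+j\ell}=n_i+jm_0$), which is a correct and welcome elaboration of that step.
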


\begin{proof}
    Because $\overline{\mathcal{O}}$ is swap-closed, $\swap(\overline{f})\in\overline{\mathcal{O}}$.  Since $|\mathcal{O}|=|\overline{\mathcal{O}}| \ell$, we know that if we look at the deflations and content words of all the elements of $\mathcal{O}$, we get every pairing of elements in $\overline{\mathcal{O}}$ and rotations of $\Con(f)$.  In particular, $\mathcal{O}$ contains an element $g$ with deflation $\swap(\overline{f})$ and content $\rho^i(\Con(f))$.  By \Cref{lem:SwapOnContent},  since $\rho^i(\Con(f)) = \rev(\Con(f))$, $g$ has the same content as $\swap(f)$. Also, by \Cref{lem:SwapAndDeflation}, $g$ has the same deflation as $\swap(f)$.  Thus, from \Cref{remark:defl_con}, we have $g=\swap(f)$.  Therefore, $\mathcal{O}$ is swap-closed and by \Cref{thm:swap_orbitmesy}, both the $\as_x$ and $\tot$ statistics are orbitmesic on $\mathcal{O}$.
\end{proof}

\section{Orbitmesy and Deflation}\label{sec:deflation}

In \Cref{cor:swap-deflation-orbitmesy} we saw that we could prove an increasing labeling promotion orbit was orbitmesic using information about the deflation of the orbit.  In this section we show some other instances where an orbit's deflation allows us to prove orbitmesy.

We begin with a lemma about the global average of the statistics $\as_x$ and $\tot$.

\begin{lemma}\label{lem:GlobalAverage}
Let $P$ be a self-dual poset and consider all elements of $\Inc^q(P)$. 
\begin{enumerate}
    \item For all $x \in P$, the global average of the antipodal sum statistic with respect to $x$ is $q+1$. That is, \[
\frac{1}{\vert P\vert}\sum_{f \in \Inc^q(P)} \as_x(f) = q+1.
    \]
    \item The global average of the total sum statistic is $\frac{\vert P \vert}{2} (q+1)$. That is, \[
\frac{1}{\vert P \vert } \sum_{f \in \Inc^q(P)} \tot(f) = \frac{\vert P \vert}{2}(q+1).
    \]
\end{enumerate}
\end{lemma}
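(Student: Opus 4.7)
The plan is to deduce both parts from the swap homomesies already established in \Cref{prop:swap-homomesy-antipodal} and \Cref{prop:swap-homomesy-total}, using the fact that $\swap$ is a bijection on $\Inc^q(P)$ (immediate from $\kappa$ being an involution and $q+1-(q+1-i)=i$). Since homomesy means that every orbit average equals the global average, once we know the orbit-average constants for the swap action, the global averages drop out.

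For part (1), I would observe that the proof of \Cref{prop:swap-homomesy-antipodal} actually computes the orbit-average constant: on every $\swap$-orbit, the average of $\as_x$ is exactly $q+1$ (the two-element orbits sum to $2(q+1)$, and the fixed points individually satisfy $\as_x(f)=q+1$). Since $\Inc^q(P)$ is partitioned by swap-orbits, summing the orbit sums and dividing by $|\Inc^q(P)|$ gives the global average $q+1$.

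For part (2), rather than chasing the constant through \Cref{prop:swap-homomesy-total} directly, I would reduce to part (1) via the identity
\[
2\,\tot(f) \;=\; \sum_{x\in P}\as_x(f),
\]
which holds for every $f\in\Inc^q(P)$. This identity follows because $\kappa$ is a bijection on $P$, so reindexing gives $\sum_{x\in P}f(\kappa(x))=\sum_{x\in P}f(x)=\tot(f)$, and hence $\sum_x(f(x)+f(\kappa(x)))=2\tot(f)$. Averaging both sides over $\Inc^q(P)$ and applying part (1) to each of the $|P|$ summands on the right yields the global average $\tfrac12\sum_{x\in P}(q+1)=\tfrac{|P|}{2}(q+1)$.

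There is no substantive obstacle here: the work was done when $\swap$ was analyzed in \Cref{sec:swap}. The only point that requires care is the reindexing step that converts $\tot$ into a sum of antipodal pairs, which depends essentially on $\kappa$ being a bijection (equivalently, on $P$ being self-dual, which is a standing assumption).
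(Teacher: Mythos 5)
Your proof is correct and follows essentially the same route as the paper: part (1) is deduced from the fact that every $\swap$-orbit has average $\as_x$-value $q+1$ (as computed in the proof of \Cref{prop:swap-homomesy-antipodal}) together with the partition of $\Inc^q(P)$ into swap-orbits, and part (2) follows from part (1) via the identity $\sum_{x\in P}\as_x(f)=2\tot(f)$. No issues.
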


\begin{proof}
From the proof of \Cref{prop:swap-homomesy-antipodal} we know that the average of $\as_x$ is $q
+1$ on every orbit of $\swap$ on $\Inc^q(P)$.  This means that the average of $\as_x$ on $\Inc^q(P)$ is $q+1$.  The second statement follows from the first statement and the fact that for all $f\in\Inc^q(P)$, $\sum_{x\in P}\as_x(f)=2\tot(f)$.
\end{proof}

\begin{proposition}\label{linear_ext} Given a self-dual poset $P$, let $f \in \Inc^q(P)$ such that $\overline{f}$ is a linear extension. Moreover, given $r,\ell,\tau$ as in \Cref{thm:OrderComparedToDeflation}, suppose $\gcd(r\ell/q,\tau) = 1$. If $\mathcal{O}$ is the promotion orbit of $\Inc^q(P)$ containing $f$, then the orbit $\mathcal{O}$ is orbitmesic with respect to the total sum statistic $\tot$.
\end{proposition}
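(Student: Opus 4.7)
The strategy is to exploit the fact that when $\overline{f}$ is a linear extension, the total sum $\tot(g)$ of any element $g \in \mathcal{O}$ depends only on the content word of $g$ and not on its deflation. This decouples the two sources of variation along the orbit and makes averaging easy.

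First, I would record that by \Cref{thm:OrderComparedToDeflation} and the hypothesis $\gcd(r\ell/q,\tau)=1$, we have $|\mathcal{O}|=\tau\ell$. Since $\overline{f}$ is a linear extension, $r=|P|$, every element of $\overline{\mathcal{O}}$ is also a linear extension (promotion acts within $\Inc^r(P)$), and we must show the orbit average of $\tot$ equals $\frac{|P|(q+1)}{2}$, the global average from \Cref{lem:GlobalAverage}.

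Next, I would use \Cref{remark:defl_con} to identify elements of $\mathcal{O}$ with pairs (deflation, content word). There are exactly $\tau$ deflations in $\overline{\mathcal{O}}$ and $\ell$ distinct cyclic rotations of $\Con(f)$, giving $\tau\ell=|\mathcal{O}|$ pairs. Since the map $g\mapsto (\overline{g},\Con(g))$ is injective, it is a bijection, so every pair arises exactly once along the orbit. Now fix any $g\in\mathcal{O}$, write $\Con(g)=c$ with the positions of its $1$'s being $p_1<p_2<\cdots<p_r$. By the definition of deflation, $g(x)=p_{\overline{g}(x)}$ for every $x\in P$. Because $\overline{g}\colon P\to [r]$ is a bijection (linear extension), summing over $x\in P$ reindexes to summing over $i\in [r]$:
\[
\tot(g)=\sum_{x\in P}p_{\overline{g}(x)}=\sum_{i=1}^{r}p_i,
\]
which depends only on $c$, not on $\overline{g}$.

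The final step is the combinatorial averaging. For each of the $\ell$ rotations $c^{(j)}=\rho^j(\Con(f))$, let $\Sigma(c^{(j)})$ denote the sum of positions of $1$'s in $c^{(j)}$. By the bijection above,
\[
\sum_{g\in\mathcal{O}}\tot(g)=\tau\sum_{j=0}^{\ell-1}\Sigma(c^{(j)}).
\]
To evaluate this, I would sum over all $q$ cyclic shifts. Since $\Con(f)$ has period $\ell$, each of the $\ell$ distinct rotations appears $q/\ell$ times among the $q$ rotations, so $\sum_{j=0}^{q-1}\Sigma(\rho^j(\Con(f)))=(q/\ell)\sum_{j=0}^{\ell-1}\Sigma(c^{(j)})$. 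On the other hand, summing over all $q$ shifts, each of the $r$ ones in $\Con(f)$ occupies each position in $\{1,\ldots,q\}$ exactly once as $j$ varies, giving $\sum_{j=0}^{q-1}\Sigma(\rho^j(\Con(f)))=r\cdot\frac{q(q+1)}{2}$. Solving yields $\sum_{j=0}^{\ell-1}\Sigma(c^{(j)})=\frac{r\ell(q+1)}{2}$, and therefore
\[
\frac{1}{|\mathcal{O}|}\sum_{g\in\mathcal{O}}\tot(g)=\frac{\tau\cdot r\ell(q+1)/2}{\tau\ell}=\frac{r(q+1)}{2}=\frac{|P|(q+1)}{2},
\]
matching the global average from \Cref{lem:GlobalAverage} and establishing orbitmesy. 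The only potential obstacle is the counting step identifying every (content rotation, deflation) pair with a unique orbit element; this follows cleanly from the coprimality hypothesis $\gcd(r\ell/q,\tau)=1$ combined with \Cref{prop:ProRestricts} and \Cref{cor:can-find-lift}, and no further work with the antipodal structure (or self-duality beyond what \Cref{lem:GlobalAverage} uses) is required.
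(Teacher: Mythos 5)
Your proposal is correct and follows essentially the same route as the paper: both reduce $\tot(g)$ to the sum of the positions of the $1$'s in $\Con(g)$ (using that a linear-extension deflation forces all labels to be distinct), use the gcd hypothesis to see that each of the $\ell$ rotations of $\Con(f)$ pairs with each of the $\tau$ deflations exactly once, and then average the position-sums over the rotations to get $\frac{r(q+1)}{2}$. Your evaluation of $\sum_j \Sigma(\rho^j(\Con(f)))$ by passing through all $q$ shifts is just a repackaging of the paper's observation that the $\ell$ distinct rotations sum to the constant vector $\frac{r\ell}{q}(1,\ldots,1)$.
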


\begin{proof} Let $f$ have content word $\Con(f)$. The condition on the $\gcd$ of these values means that each cyclic rotation of $\Con(f)$ appears $\tau$ times when we view elements of $\mathcal{O}$ as pairs of deflations and content words. Since $\overline{f}$ is a linear extension, we in particular know that every label in $f$ is distinct and by \Cref{lem:ProRotateContent} the same is true for all $\Pro^i(f) \in \mathcal{O}$. Therefore, computing the total sum of $f$ consists of just adding the positions of the 1's in $\Con(f)$. If we sum the vectors $\Con(f) + \rho(\Con(f)) + \cdots + \rho^{\ell-1}(\Con(f))$, we have $\frac{r \ell}{q}(1,1,\ldots,1)$, so that the contribution to total sum is $\frac{r \ell}{q}{q+1 \choose 2}$. Therefore, $\sum_{f \in \mathcal{O}} \tot(f) = \frac{r \ell \tau (q+1)}{2}$.
The size of the orbit is $\ell\tau$ so the average total sum is $\frac{r(q+1)}{2}$. Since $r$ must be $\vert P \vert$, by \Cref{lem:GlobalAverage}, this is equal to the global average. In other words, this orbit is orbitmesic.
\end{proof}

\begin{example}\label{ex:linear_ext}
We demonstrate \Cref{linear_ext} with the following increasing labeling $f$ in $\Inc^6(\calZ_4)$:
\begin{center}
\begin{tikzpicture}
    \node (A) at (0,0) {1};
    \node (B) at (1,1) {6};
    \node (C) at (2,0) {2};
    \node (D) at (3,1) {4};
    \draw (A) -- (B);
    \draw (B) -- (C);
    \draw (C) -- (D);
\end{tikzpicture}
\end{center}

The deflation of $f$ is:
\begin{center}
\begin{tikzpicture}
   \node (A) at (0,0) {1};
    \node (B) at (1,1) {4};
    \node (C) at (2,0) {2};
    \node (D) at (3,1) {3};
    \draw (A) -- (B);
    \draw (B) -- (C);
    \draw (C) -- (D);
\end{tikzpicture}
\end{center}
Observe that $\overline{f}$ is a linear extension with largest label $4$. The promotion orbit of $\overline{f}$ in $\Inc^4(\calZ_4)$ is as follows:
\begin{center}
\begin{tikzpicture}
   \node (A1) at (0,0) {1};
    \node (B1) at (1,1) {4};
    \node (C1) at (2,0) {2};
    \node (D1) at (3,1) {3};
    \draw (A1) -- (B1);
    \draw (B1) -- (C1);
    \draw (C1) -- (D1);
    \draw[->] (3.5, 0.5) -- (4,0.5);
   \node (A2) at (4.5,0) {3};
    \node (B2) at (5.5,1) {4};
    \node (C2) at (6.5,0) {1};
    \node (D2) at (7.5,1) {2};
    \draw (A2) -- (B2);
    \draw (B2) -- (C2);
    \draw (C2) -- (D2);
    \draw[->] (8, 0.5) -- (8.5,0.5);
   \node (A3) at (9,0) {2};
    \node (B3) at (10,1) {3};
    \node (C3) at (11,0) {1};
    \node (D3) at (12,1) {4};
    \draw (A3) -- (B3);
    \draw (B3) -- (C3);
    \draw (C3) -- (D3);
    \draw (10.5, -0.5) edge[->,bend left=75, looseness=0.3] (1.5,-0.5);
\end{tikzpicture}
\end{center}

We can see that the promotion orbit of $\overline{f}$ has 3 elements. Also, the period of the content word of $f$ is equal to $6$.
Thus, $\gcd(r\ell/q,\tau) = \gcd((4\cdot 6)/6, 3) = 1$.
We conclude by \Cref{linear_ext} that the promotion orbit of $f$ is orbitmesic with respect to the total sum statistic. 
\end{example}

In the next section, we will see an example of an increasing labeling, $f$, on $\calZ_4$ which deflates to a linear extension where $\gcd(r\ell/q,\tau) \ne 1$ and  whose promotion orbit  still exhibits orbitmesy with respect to $\tot$.  It would be interesting to consider how \Cref{linear_ext} could be strengthened to cover such an example. 

Observe that if $f\in \Inc^r(P)$ is both packed and a linear extension, then $r=|P|$.
Then the orbit average of the total sum statistic is $\frac{1}{|\mathcal{O}|}\left(\binom{r+1}{2} \cdot |\mathcal{O}|\right) = \frac{(r+1)r}{2}$.
By \Cref{lem:GlobalAverage}, the orbit $\mathcal{O}$ containing $f$ is orbitmesic.
Thus, \cref{linear_ext} tells us when orbitmesy of the total sum statistic is preserved under inflation.
The next proposition provides a similar criterion for orbitmesy of the antipodal sum statistic $\mathcal{A}_x$.
In the following, given $x$ an element of a self-dual poset $P$, call  a promotion orbit $\mathcal{O}$ of $\Inc^q(P)$ $x$-stable if, for all $1 \leq k \leq q$, the multiplicity of $k$ in the multiset $\{f(x),f(\kappa(x))
\}_{f \in \mathcal{O}}$ is equal to the multiplicity of $q+1-k$. This is a generalization of the property of being swap-closed; if $\mathcal{O}$ is swap-closed, then it is $x$-stable for all $x \in P$. 

\begin{proposition}\label{prop:Stable}
Let $P$ be a self-dual poset and let $x \in P$. Let $f \in \Inc^q(P)$ be such that $r,\ell,\tau$ from \Cref{thm:OrderComparedToDeflation} satisfy $\gcd(r\ell/q,\tau) = 1$. Let $\mathcal{O}$ be the promotion orbits of elements of $\Inc^q(P)$ containing $f$. If the deflated orbit $\overline{\mathcal{O}}$ is $x$-stable, then $\mathcal{O}$ exhibits orbitmesy with respect to the antipodal sum statistic $\mathcal{A}_x$. 
\end{proposition}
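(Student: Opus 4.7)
The plan is to compute the orbit sum $\sum_{g \in \mathcal{O}} \as_x(g)$ directly and show it equals $|\mathcal{O}|(q+1)$, which by \Cref{lem:GlobalAverage} is the global average of $\as_x$ times the orbit size. By \Cref{thm:OrderComparedToDeflation} and the hypothesis $\gcd(r\ell/q,\tau)=1$, we have $|\mathcal{O}|=\ell\tau$, and it follows from \Cref{prop:ProRestricts}, \Cref{cor:can-find-lift}, and \Cref{remark:defl_con} that the map $g \mapsto (\overline{g},\Con(g))$ is a bijection from $\mathcal{O}$ onto $\overline{\mathcal{O}}\times\mathcal{R}$, where $\mathcal{R}$ denotes the set of $\ell$ distinct cyclic rotations of $\Con(f)$. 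If the $1$-positions of $c \in \mathcal{R}$ are $p_1^{(c)} < \dots < p_r^{(c)}$, then $g(y)=p_{\overline{g}(y)}^{(c)}$ for every $y\in P$, so
\[
\sum_{g\in\mathcal{O}}\as_x(g)\;=\;\sum_{c\in\mathcal{R}}\sum_{\overline{g}\in\overline{\mathcal{O}}}\bigl(p_{\overline{g}(x)}^{(c)}+p_{\overline{g}(\kappa(x))}^{(c)}\bigr)\;=\;\sum_{k=1}^r n_k\,S_k,
\]
where $n_k$ is the multiplicity of $k$ in the multiset $\{\overline{g}(x),\overline{g}(\kappa(x))\}_{\overline{g}\in\overline{\mathcal{O}}}$ and $S_k:=\sum_{c\in\mathcal{R}} p_k^{(c)}$.

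The hypothesis that $\overline{\mathcal{O}}$ is $x$-stable says exactly that $n_k=n_{r+1-k}$ for all $k$, and the crux of the argument is the following ``reversal identity'' for the purely combinatorial quantity $S_k$:
\[
S_k+S_{r+1-k}\;=\;\ell(q+1) \qquad\text{for every } 1\le k\le r.
\]
Granting this, a symmetrization pairing $k$ with $r+1-k$ gives
\[
2\sum_{k=1}^r n_k\,S_k\;=\;\sum_{k=1}^r n_k\bigl(S_k+S_{r+1-k}\bigr)\;=\;\ell(q+1)\sum_{k=1}^r n_k\;=\;2\ell\tau(q+1),
\]
since $\sum_k n_k=2\tau$ (each of the $\tau$ elements of $\overline{\mathcal{O}}$ contributes two values to the multiset).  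Therefore $\sum_{g\in\mathcal{O}}\as_x(g)=\ell\tau(q+1)=|\mathcal{O}|(q+1)$, which is exactly orbitmesy for $\as_x$.

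The main obstacle is the reversal identity itself, which is a statement about cyclic rotations of a binary word with no poset structure involved.  One might hope to pair each $c\in\mathcal{R}$ with $\rev(c)$ via $p_k^{(\rev(c))}=q+1-p_{r+1-k}^{(c)}$, but in general $\rev(c)\notin\mathcal{R}$, so the symmetry has to be extracted at the level of sums rather than of individual rotations.  My plan is to pass to the full sum over all $q$ cyclic shifts (in which each distinct rotation is counted $q/\ell$ times), reducing the identity to $\sum_{i=0}^{q-1}\bigl(p_k^{(\rho^i(\Con(f)))}+p_{r+1-k}^{(\rho^i(\Con(f)))}\bigr)=q(q+1)$, and then to partition the shifts into the $r$ ``gap zones'' of $\Con(f)$: writing the $1$-positions of $\Con(f)$ as $P_1<\dots<P_r$ with cyclic gap sizes $g_j=P_{j+1}-P_j-1$, a shift $i=P_j+t$ with $t\in\{0,\ldots,g_j\}$ satisfies $p_k^{(\rho^i(\Con(f)))}=k+G_j^{(k)}-t$ where $G_j^{(k)}:=\sum_{m=0}^{k-1}g_{j+m}$ in cyclic indices.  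Summing the resulting arithmetic progressions over $(j,t)$ reduces the claim to the algebraic identity
\[
\sum_{j=1}^r(g_j+1)\bigl(G_j^{(k)}+G_j^{(r+1-k)}\bigr)\;=\;(q-r)(q+1)+\sum_{j=1}^r g_j^2,
\]
which in turn follows from $\sum_j g_j=q-r$, the convolution symmetry $C_m:=\sum_j g_jg_{j+m}=C_{r-m}$, and the total convolution identity $\sum_{m=0}^{r-1}C_m=(q-r)^2$.  The key algebraic observation is that the combined index ranges $\{0,\ldots,k-1\}$ and $\{0,\ldots,r-k\}$ cover $\{0,1,\ldots,r-1\}$ with a single duplication at $m=0$ after one applies $C_m=C_{r-m}$, and this is precisely the place where the defining relation $k+(r+1-k)=r+1$ is used.
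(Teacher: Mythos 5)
Your proof is correct, and at bottom it runs the same computation as the paper's: both use the gcd hypothesis to identify $\mathcal{O}$ with the set of pairs (element of $\overline{\mathcal{O}}$, cyclic rotation of $\Con(f)$), reduce $\as_x$ to positions of $1$'s in the content word, sum arithmetic progressions over the gap zones of $\Con(f)$, and invoke $x$-stability as the symmetry $k\leftrightarrow r+1-k$. The difference is in the decomposition. The paper accumulates the orbit sum block-by-block (all $\tau$ deflations paired with the contents $\rho^i(\Con(f))$ for $i$ ranging over one gap zone, then cyclically shifted), arriving at a quadratic form in the gap sizes $s_1,\dots,s_r$ whose coefficients $a_i$ record the multiset $\{\overline{g}(x),\overline{g}(\kappa(x))\}$; stability enters as the identities $a_i+a_{r+2-i}=2\tau$, which make the quadratic form factor as $\tau q(q+1)$. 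You instead separate variables as $\sum_k n_k S_k$ and isolate the reversal identity $S_k+S_{r+1-k}=\ell(q+1)$ as a standalone statement about cyclic rotations of a binary word, with stability ($n_k=n_{r+1-k}$) entering only in a one-line symmetrization at the end. Your convolution bookkeeping ($C_m=C_{r-m}$, $\sum_{m=0}^{r-1}C_m=(q-r)^2$, and the two index ranges covering $\{0,\dots,r-1\}$ with a single duplication at $m=0$) is the same algebra as the paper's coefficient analysis, packaged per-$k$ rather than per-monomial; I checked that your position formula $p_k^{(\rho^{P_j+t}(\Con(f)))}=k+G_j^{(k)}-t$ and the resulting identity both close correctly, and your caution that $\rev(c)$ need not lie in $\mathcal{R}$ (so the symmetry must be extracted at the level of sums) is well placed. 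What your organization buys is a cleaner, reusable intermediate lemma that is purely about binary necklaces and makes transparent exactly where the stability hypothesis is used; what the paper's buys is that the single factored expression $\tau(s_1+\cdots+s_r+r)(s_1+\cdots+s_r+r+1)$ is immediately recognizable as $\tau q(q+1)$. Your treatment of $\ell<q$ (each distinct rotation counted $q/\ell$ times among all $q$ shifts) matches the paper's closing remark.
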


\begin{proof}
As in the proof of \Cref{linear_ext}, the condition on the gcd implies that  $\mathcal{O}$ contains every possible pair of a labeling in $\overline{\mathcal{O}}$ and a cyclic rotation of $\Con(f)$. 
We begin by assuming that $\ell$, the period of $\Con(f)$, is equal to $q$.
We can use the description of $\mathcal{O}$ as pairs of content words and packed labelings to break up the computation of $\mathcal{A}_x$ for all labelings in $\mathcal{O}$. 

We assume we have chosen $f$ so that $\Con(f) = (c_1,\ldots,c_q)$ has $c_q = 1$. Define $s_1,\ldots,s_r$ to be the nonnegative integers such that the $r$ values of $1$ in $\Con(f)$ occur at $s_1+1,s_1+s_2 + 2,\ldots,s_1 + \cdots + s_{r-1} + s_r + r$. That is, $\Con(f)$ is $s_1$ 0's followed by a 1, then $s_2$ 0's followed by a 1, and so on.

Let $j$ and $k$ be such that $\overline{f}(x) = j$ and $\overline{f}(\kappa(x)) = k$. With our notation, this means $f(x) = (\sum_{i=1}^j s_i) + j$ and $f(\kappa(x)) = (\sum_{i=1}^k s_i) + k$. For all $t \leq s_1$ (equivalently, for all $t$ such that $c_1 = c_2 = \cdots = c_t = 0$), from \Cref{prop:ProRestricts} we know $\Pro^t(f)(x) = (\sum_{i=1}^j s_i) + j - t$ and $\Pro^t(f)(\kappa(x)) = (\sum_{i=1}^k s_i) + k - t$.  Therefore, we have
\begin{align*}
\sum_{i=0}^{s_1} \mathcal{A}_x(\Pro^i(f)) &= (s_1 + 1)(\sum_{i=1}^{j} s_i + \sum_{i=1}^{k} s_i + j + k) -2 - 4 - \cdots - 2s_1\\
&= (s_1 + 1)(\sum_{i=1}^{j} s_i + \sum_{i=1}^{k} s_i + j + k) -s_1(s_1+1)\\
& = (s_1 + 1)(\sum_{i=1}^{j} s_i + \sum_{i=1}^{k} s_i + j + k-s_1)\\
& = (s_1 + 1)(s_1 + \sum_{i=2}^{j} s_i + \sum_{i=2}^{k} s_i + j + k).\\
\end{align*}

The computation is similar for all other possible deflations. Let $f_2,\ldots,f_\tau$ be the $\tau-1$ other labelings in $\mathcal{O}$ with content word equal to $\Con(f)$. Let $j_z,k_z$ be such that $f_z(x) = j_z$ and $f_z(\kappa(x)) = k_z$. Let $f_1$ be our original labeling $f$ so that $j_1 = j$ and $k_1 = k$ from the previous summation. Then, we compute
\begin{align}
\sum_{z = 1}^\tau \sum_{i=0}^{s_1} \mathcal{A}_x(\Pro^i(f_z)) = (s_1 + 1)\big(a_1 s_1 + a_2 s_2 + \cdots + a_rs_r + \sum_{z=1}^\tau j_z + k_z)\big)\label{eq:SumFirstSection}
\end{align}
where $a_1 = \tau$ and for $i > 1$, $a_i$ is equal to the number of values in the multiset $\{j_z,k_z\}_{z=1}^r$ which are at least $i$. 
Note that the stable condition implies $\sum_{z=1}^\tau (j_z + k_z) = \tau(r+1)$.  This means \begin{align}
 \sum_{i=1}^r a_i=\tau+\sum_{i=2}^r a_i=\tau+\sum_{z=1}^\tau ((j_z-1)+(k_z-1))=\sum_{z=1}^\tau (j_z+k_z)-\tau= \tau (r+1) - \tau = \tau r.  \label{eq:SimplifySumai} 
\end{align}

Let $g$ be the labeling which deflates to $\overline{f}$ and has content word $\rho^{s_1+1}(\Con(f))$. As noted in the first sentence of this proof, we know $g$ is in the orbit $\mathcal{O}$.  By the definition of the numbers $s_i$,  we know that $\mathcal{A}_x(g) = (\sum_{i=2}^{j+1} s_i) + j + (\sum_{i=2}^{k+1} s_i) + k$.

We can compute $\sum_{i=s_1 + 1}^{s_1 + s_2 + 1} \mathcal{A}_x(\Pro^i(g))$ analogously to the computation of $\sum_{i=0}^{s_1} \mathcal{A}_x(\Pro^i(f))$, but with the values $s_1,\ldots,s_r$ cyclically rotated. Therefore, $\sum_{h \in \mathcal{O}} \mathcal{A}_x(h)$ is the result of summing the $r$ terms given by all cyclic shifts of \Cref{eq:SumFirstSection}. Explicitly, this is \begin{align*}
\sum_{h \in \mathcal{O}}\mathcal{A}_x(h) &= (s_1 + 1)(\tau s_1 + a_2 s_2 + \cdots + a_{r-1}s_{r-1} + a_rs_r + \tau (r+1))\\
& + (s_2 + 1)(\tau s_2 + a_2s_3 + \cdots + a_{r-1}s_r + a_r s_1 + \tau (r+1))\\
& \cdots \\
& +(s_r + 1)(\tau s_r + a_2 s_1 + \cdots + a_{r-1}s_{r-2} + a_rs_{r-1} + \tau (r+1)).
\end{align*}

Treating this as a polynomial in the variables $s_1,\ldots,s_r$ and analyzing the coefficients, we have \begin{align*}
\sum_{h \in \mathcal{O}} \mathcal{A}_x(h) &= \tau(s_1^2 + \cdots + s_r^2) + (a_2+a_r)(s_1s_2 + s_2s_3 + \cdots + s_rs_1) \\
&+ (a_3 + a_{r-1})(s_1s_3 + s_2s_4 + \cdots + s_{r-1}s_1) + \cdots
+ \tau(2r+1) (s_1 + \cdots + s_r) + \tau r (r+1),
\end{align*}
where we use \Cref{eq:SimplifySumai} to analyze for the coefficient of linear terms. As $a_r$ is the number of values in $\{j_z,k_z\}_z$ which are equal to $r$, we have $a_r=2\tau - \# \{j_z,k_z\}_z \cap [1,r-1]$ and by the stable condition this is also $2\tau - \# \{j_z,k_z\}_z \cap [2,r]$. This latter quantity is $2\tau - a_2$ because $\# \{j_z,k_z\}_z \cap [2,r]$ is the number of elements in $\{j_z,k_z\}_z$ that are at least 2. This means $a_2 + a_r = 2\tau$. We can proceed similarly for the other coefficients of the squarefree quadratic terms. Therefore, we have \begin{align*}
\sum_{h\in\mathcal{O}}\mathcal{A}_x(h) &= \tau(s_1^2 + \cdots + s_r^2) + 2\tau(s_1s_2 + s_1s_3 + \cdots + s_{r-1}s_r) + (\tau(2r+1) )(s_1 + \cdots + s_r) \\&+ \tau r (r+1)\\
&= \tau (s_1 + s_2 + \cdots + s_r + r)(s_1 + s_2 + \cdots + s_r + r+1)\\
&= \tau q (q+1).
\end{align*}
Since the size of $\mathcal{O}$ is $\tau q$ by Theorem \ref{thm:OrderComparedToDeflation}, the average value of $\mathcal{A}_x$ on $\mathcal{O}$ is $q+1$. By \Cref{lem:GlobalAverage} this is equal to the global average of $\mathcal{A}_x$ on $\Inc^q(P)$. 

Now, if $\ell$ is a proper divisor of $q$, this computation counts each $h\in\mathcal{O}$ exactly $\frac{q}{\ell}$ times. In this case, the size of $\mathcal{O}$ is $\tau \ell$, and the result again follows.
\end{proof}

\begin{example}\label{ex:prop.Stable}
We demonstrate \Cref{prop:Stable} with the following increasing labeling $f$ in $\Inc^8(\calZ_4)$:
\begin{center}
\begin{tikzpicture}
   \node (A) at (0,0) {3};
    \node (B) at (1,1) {5};
    \node (C) at (2,0) {3};
    \node (D) at (3,1) {8};
    \draw (A) -- (B);
    \draw (B) -- (C);
    \draw (C) -- (D);
\end{tikzpicture}
\end{center}

The deflation of $f$ is:
\begin{center}
\begin{tikzpicture}
   \node (A) at (0,0) {1};
    \node (B) at (1,1) {2};
    \node (C) at (2,0) {1};
    \node (D) at (3,1) {3};
    \draw (A) -- (B);
    \draw (B) -- (C);
    \draw (C) -- (D);
\end{tikzpicture}
\end{center}
Its promotion orbit in $\Inc^3(\calZ_4)$ is as follows:
\begin{center}
   \begin{tikzpicture}
   \node (A1) at (0,0) {1};
    \node (B1) at (1,1) {2};
    \node (C1) at (2,0) {1};
    \node (D1) at (3,1) {3};
    \draw (A1) -- (B1);
    \draw (B1) -- (C1);
    \draw (C1) -- (D1);
    \draw[->] (3.5, 0.5) -- (4,0.5);
   \node (A2) at (4.5,0) {1};
    \node (B2) at (5.5,1) {3};
    \node (C2) at (6.5,0) {1};
    \node (D2) at (7.5,1) {2};
    \draw (A2) -- (B2);
    \draw (B2) -- (C2);
    \draw (C2) -- (D2);
    \draw[->] (8, 0.5) -- (8.5,0.5);
   \node (A3) at (9,0) {2};
    \node (B3) at (10,1) {3};
    \node (C3) at (11,0) {1};
    \node (D3) at (12,1) {3};
    \draw (A3) -- (B3);
    \draw (B3) -- (C3);
    \draw (C3) -- (D3);
    \draw (11, -0.5) edge[->,bend left] (10.25,-1.5);
   \node (A4) at (6.75,-2) {1};
    \node (B4) at (7.75,-1) {3};
    \node (C4) at (8.75,-2) {2};
    \node (D4) at (9.75,-1) {3};
    \draw (A4) -- (B4);
    \draw (B4) -- (C4);
    \draw (C4) -- (D4);
   \draw[->] (6.25, -1.5) -- (5.75,-1.5);
   \node (A5) at (2.25,-2) {2};
    \node (B5) at (3.25,-1) {3};
    \node (C5) at (4.25,-2) {1};
    \node (D5) at (5.25,-1) {2};
    \draw (A5) -- (B5);
    \draw (B5) -- (C5);
    \draw (C5) -- (D5);
    \draw (1.75, -1.5) edge[->,bend left] (1,-0.5);
\end{tikzpicture}
\end{center}

Take $x$ to be the leftmost minimal element, and observe that the multiset $\{\overline{f}(x), \overline{f}(\kappa(x))\} = \{1,1,1, 2,2,2,2, 3,3,3\}$.
Since $1$ and $3$ appear with the same multiplicity, we see that the $\overline{f}$-orbit is $x$-stable.
Moreover, $\gcd(r\ell/q,\tau) = \gcd((3\cdot 8)/8,5)= 1$.
We conclude by \Cref{prop:Stable} that promotion orbit of $f$ is orbitmesic with respect to the antipodal sum statistic $\mathcal{A}_x$.
\end{example}
\section{Orbitmesy for $\calZ_4$}
\label{sec:Z4}

In this section, we prove an orbitmesy for the antipodal sum statistics of increasing labelings of the poset $\calZ_4$ under promotion. Here there are only two antipodal sum statistics, so we give them their own notation. 
\begin{definition}
    Given an increasing labeling $f\in\Inc^q(\calZ_n)$, we define the exterior antipodal sum $\ase(f)$ as the sum of the labels on the leftmost and rightmost elements. Given  $f\in\Inc^q(\calZ_4)$, we define the interior antipodal sum $\asi(f)$ as the sum of the labels on the other two elements.
\end{definition}

\begin{example}\label{ex:as_e}
Consider the following increasing labelings of $\calZ_4$ and $\calZ_6$, which we refer to as $f$ and $g$ respectively.
\begin{center}
\begin{tabular}{ccc}
\begin{tikzpicture}
    \node (A) at (0,0) {1};
    \node (B) at (1,1) {3};
    \node (C) at (2,0) {2};
    \node (D) at (3,1) {6};
    \draw (A) -- (B);
    \draw (B) -- (C);
    \draw (C) -- (D);
\end{tikzpicture}&&
\begin{tikzpicture}
    \node (A) at (0,0) {1};
    \node (B) at (1,1) {5};
    \node (C) at (2,0) {2};
    \node (D) at (3,1) {3};
    \node (E) at (4,0) {2};
    \node (F) at (5,1) {4};
    \draw (A) -- (B);
    \draw (B) -- (C);
    \draw (C) -- (D);
    \draw (D) -- (E);
    \draw (E) -- (F);
\end{tikzpicture}
\end{tabular}
\end{center}
The exterior antipodal sum is $1+6=7$ for $f$ and $1+4=5$ for $g$. The interior antipodal sum is $3+2=5$ for $f$.
\end{example}

We next define a pattern avoidance condition on labelings of fence posets.  This is analogous to the idea of pattern avoidance in permutations.

\begin{definition}\label{def:pattern}
Consider an increasing labeling of fence poset $P$ with $n$ elements. Construct the word $u=u_1u_2\dots u_n$ by reading the labels from left to right.  We say the labeling \emph{contains a pattern} $\pi=\pi_1\pi_2\dots\pi_k$ if there exist $i_1<i_2<\dots<i_k$ such that $u_{i_1},u_{i_2},\dots,u_{i_k}$ is in the same relative order as $\pi$.  
    If a labeling does not contain a pattern $\pi$, we say it \emph{avoids} $\pi$. 
    Further, we say a subset of $\Inc^q(P)$ avoids $\pi$ if every labeling in the subset avoids $\pi$.
\end{definition}

\begin{example}
\label{ex:labelings} 
The words for the labelings $f$ and $g$ from \Cref{ex:as_e} are $1326$ and $152324$, respectively. We see, for example, that $f$ avoids pattern 3112 but $g$ does not as it has the subword $5224$.
\end{example}

\begin{definition}\label{def:Balanced}
    We say an element of $\Inc^q(\calZ_4)$ with labels $w\leq x \leq y \leq z$ is \emph{imbalanced} if  $x-w\neq z-y$ and $w-z+q\neq y-x$.  We say it is \emph{balanced} otherwise. 
\end{definition}

\begin{example}
Consider the following increasing labelings in $\Inc^6(\calZ_4)$.

\begin{center}
\begin{tabular}{ccc}
\begin{tikzpicture}
    \node (A) at (0,0) {1};
    \node (B) at (1,1) {3};
    \node (C) at (2,0) {2};
    \node (D) at (3,1) {6};
    \draw (A) -- (B);
    \draw (B) -- (C);
    \draw (C) -- (D);
\end{tikzpicture}&&
\begin{tikzpicture}
   \node (A) at (0,0) {1};
    \node (B) at (1,1) {4};
    \node (C) at (2,0) {2};
    \node (D) at (3,1) {6};
    \draw (A) -- (B);
    \draw (B) -- (C);
    \draw (C) -- (D);
\end{tikzpicture}
\end{tabular}
\end{center}

Using the notation in \Cref{def:Balanced}, for the first labeling we set $w = 1, x = 2, y = 3, z = 6$. We check $w-z+q = 1$ and $y - x = 1$ so that the first labeling is balanced.

For the second labeling, we set $w = 1, x = 2, y = 4, z = 6$. Since $2-1 \neq 6-4$ and $1-6+6 \neq 4-2$, this labeling is imbalanced.
\end{example}

\begin{proposition}
    Given an orbit of $\Inc^q(\calZ_4)$ under  promotion, either all labelings in the orbit are imbalanced or all labelings in the orbit are balanced.
\end{proposition}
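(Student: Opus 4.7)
My plan is to reformulate the balance condition in terms of the \emph{gap sequence} of the sorted labels, and then to exhibit this sequence as a cyclic invariant of the promotion action, using the fact that the balance condition is itself invariant under cyclic rotation.

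First, given $f \in \Inc^q(\calZ_4)$ with sorted labels $w \leq x \leq y \leq z$, I would introduce the four nonnegative integers
\[
(d_1, d_2, d_3, d_4) \;=\; \bigl(x - w,\; y - x,\; z - y,\; q + w - z\bigr),
\]
which satisfy $d_1 + d_2 + d_3 + d_4 = q$. In these terms, the condition of \Cref{def:Balanced} reads: $f$ is balanced iff $d_1 = d_3$ or $d_2 = d_4$, i.e., one of the two pairs of \emph{opposite} gaps in the cyclic $4$-tuple coincide. A direct check shows this disjunctive condition is invariant under cyclic rotations of $(d_1, d_2, d_3, d_4)$, since a cyclic shift by one step interchanges the pair $\{d_1, d_3\}$ with $\{d_2, d_4\}$ (and so swaps the two disjuncts).

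Next, I would show that promotion acts on the sorted labels of $f$ by sending $(d_1, d_2, d_3, d_4)$ to a cyclic rotation of itself. I would split into cases based on the first entry of $\Con(f)$. If $c_1 = 0$, then \Cref{prop:ProRestricts} and the fact that $\sigma_{1 \to \boxempty}$ does nothing yield $\Pro(f) = f - \mathbf{1}$ (all labels shift down by $1$); the sorted labels shift uniformly and the gap sequence is literally unchanged. If $c_1 = 1$, the content word rotates by one (\Cref{lem:ProRotateContent}), so I would compute $\Pro(f)$ via jeu de taquin on $\calZ_4$, breaking into subcases based on which of $x_1, x_3$ carry the label $1$ and on the relative order of the labels at $x_2, x_4$. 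In each subcase, reading off the sorted labels of $\Pro(f)$ lets me compare the new gap sequence to $(d_1, d_2, d_3, d_4)$ and recognize it as a cyclic rotation.

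The main obstacle is the careful handling of the degenerate subcases in which labels coincide, which produce zero entries in the gap sequence and ties in the sort order of the labels of $\Pro(f)$. Here the positions of the zero gaps must be tracked precisely to confirm that the induced transformation of the $4$-tuple is still a cyclic rotation rather than a more exotic permutation; the underlying reason this works is the rotational invariance of $\Con(f)$, augmented by the structural fact from \Cref{prop:ProRestricts} that deflation intertwines with promotion. Once the cyclic-rotation claim is verified uniformly, the rotational invariance of the balance condition finishes the proof that all labelings in a common promotion orbit share the same balance status.
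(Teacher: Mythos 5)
Your gap-sequence reformulation is exactly the paper's argument in disguise: the paper observes that when $w\neq 1$ all four sorted labels drop by one (so the gaps are unchanged), and when $w=1$ the sorted labels of $\Pro(f)$ become $x-1\le y-1\le z-1\le q$, which is precisely your cyclic rotation $(d_1,d_2,d_3,d_4)\mapsto(d_2,d_3,d_4,d_1)$; the rotation-invariance of the disjunction ``$d_1=d_3$ or $d_2=d_4$'' then finishes the proof. In the case of four distinct labels you do not need any jeu de taquin case analysis: \Cref{lem:ProRotateContent} says the content word rotates, and for distinct labels the sorted label multiset \emph{is} the content, so the rotation of the gap $4$-tuple is immediate.

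However, the degenerate case you flag as ``the main obstacle'' is not merely delicate --- the claim you hope to verify there is false. Take $f=(1,3,1,5)\in\Inc^9(\calZ_4)$ (labels read left to right), with sorted labels $1\le 1\le 3\le 5$ and gap sequence $(0,2,2,5)$. One computes $\Pro(f)=(2,9,2,4)$, whose sorted labels are $2\le 2\le 4\le 9$ with gap sequence $(0,2,5,2)$; this is not a cyclic rotation of $(0,2,2,5)$. The reason is that the deflation changes from $1213$ to $1312$, so the multiplicity-two value does not follow the rotation of the content word. Worse, $f$ is imbalanced ($1-1=0\neq 2=5-3$ and $1-5+9=5\neq 2=3-1$) while $\Pro(f)$ is balanced ($2-9+9=2=4-2$), so no bookkeeping of zero gaps can rescue the invariance: read literally for labelings with repeated labels, the statement itself fails on this orbit. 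The paper's own proof shares this blind spot (its assertion that $\Pro(f)$ has labels $x-1\le y-1\le z-1\le q$ presumes four distinct labels), but the proposition is only invoked in \Cref{thm:near-homomesy} and \Cref{prop:near-homomesy-C} for orbits containing a $1324$ pattern, where all four labels are distinct and both your argument and the paper's go through. You should therefore restrict the rotation claim to the distinct-label case and treat (or exclude) repeated labels explicitly rather than promising that the degenerate subcases still yield a cyclic rotation.
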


\begin{proof}
Let $f\in\Inc^q(\calZ_4)$ with labels $w\leq x \leq y \leq z$ and suppose $f$ is imbalanced. 
Recall from \Cref{lem:ProRotateContent} that promotion shifts the binary content word.  Thus, if $w\neq1$ then the spacing between non-zero entries in the binary content word of $\Pro(f)$ is the same as in the binary content word of $f$.
If $w=1$, then $\Pro(f)$ has labels $x-1 \leq y-1 \leq z-1\leq q$.
Since $x-w\neq z-y$ and $w-z+q\neq y-x$ and $w=1$, we know $y-x\neq q-z+1$ and $x-1\neq z-y$.  This means that in either case, $\Pro(f)$ is imbalanced.
Since applying promotion to an imbalanced labeling results in another imbalanced labeling, if an orbit contains any imbalanced labelings, it must contain only imbalanced labelings.
\end{proof}

Following the above proposition, we call an orbit of $\Inc^q(\calZ_4)$ under  promotion \emph{imbalanced} when all its elements are imbalanced.

The following theorem is an orbitmesy result that is the main aim of the rest of the section.
\begin{theorem}\label{thm:near-homomesy}
    Let $P=\calZ_4$ and $x\in P$. Let $\mathcal{O}$ be a promotion orbit of $\Inc^q(P)$. Then $\mathcal{O}$ exhibits orbitmesy with respect to the antipodal sum statistic $\as_x$ if and only if $\mathcal{O}$ avoids $1324$ or is balanced. 
\end{theorem}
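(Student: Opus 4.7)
The plan is to classify promotion orbits of $\Inc^q(\calZ_4)$ by the type of their deflated packed orbit and determine when orbitmesy holds in each type. From the proof of \Cref{thm:linear-Z4}, the packed orbits of $\calZ_4$ fall into four types: (I) the fixed point $(1,2,1,2) \in \Inc^2(\calZ_4)$; (II) the $5$-cycle on packed $3$-labelings; (III) the $2$-cycle $\{(1,3,2,4),(2,4,1,3)\} \subset \Inc^4(\calZ_4)$; (IV) the $3$-cycle $\{(1,4,2,3),(3,4,1,2),(2,3,1,4)\} \subset \Inc^4(\calZ_4)$. Since the pattern $1324$ has length equal to $|\calZ_4|$, a labeling $f$ contains $1324$ if and only if $\overline{f} = (1,3,2,4)$; hence $\mathcal{O}$ avoids $1324$ precisely when $\overline{\mathcal{O}}$ is of type (I), (II), or (IV).

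For types (I), (II), and (IV), I would verify by direct enumeration that $\overline{\mathcal{O}}$ is $x$-stable for each $x \in \calZ_4$, by listing the multiset $\{\overline{f}(x),\overline{f}(\kappa(x))\}_{\overline{f} \in \overline{\mathcal{O}}}$ and confirming the required symmetry. Reading $(\tau,r) = (1,2),(5,3),(3,4)$ from \Cref{thm:linear-Z4}, I would check that $\gcd(r\ell/q,\tau) = 1$ in every case: trivially in type (I), by coprimality of $\{1,3\}$ with $5$ in type (II), and of $\{1,2,4\}$ with $3$ in type (IV). \Cref{prop:Stable} then yields orbitmesy with respect to $\as_x$ for all orbits of these types, establishing the implication ``$\mathcal{O}$ avoids $1324$ $\Rightarrow$ orbitmesic.''

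For type (III), where \Cref{prop:Stable} fails since $\gcd(r\ell/q,\tau) = 2$ when $\ell \in \{q,q/2\}$, I would compute the orbit sum of $\ase$ directly. Parameterize $\Con(f)$ by the positions $p_1 < p_2 < p_3 < p_4$ of its ones with cyclic gaps $g_1,g_2,g_3,g_4$ summing to $q-4$; then the deflation of $\Pro^k(f)$ equals $(1,3,2,4)$ or $(2,4,1,3)$ according to the parity of $s_k := \#\{i : p_i \leq k\}$, and accordingly $\ase(\Pro^k(f))$ is one of the two sums $P_1^{(k)} + P_4^{(k)}$ or $P_2^{(k)} + P_3^{(k)}$ of the cyclically reordered positions (with $+q$ offsets when $p_i \leq k$). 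Splitting $\sum_{k=0}^{q-1} \ase(\Pro^k(f))$ into the five ranges indexed by $s_k \in \{0,1,2,3,4\}$ and expanding, I would derive the identity
\[
\sum_{k=0}^{q-1} \ase(\Pro^k(f)) \;=\; q(q+1) + (g_3 - g_1)(g_4 - g_2).
\]
Since each orbit element is counted $q/\ell$ times in this sum, the orbit average equals the global average $q+1$ if and only if $(g_3 - g_1)(g_4 - g_2) = 0$, which is exactly the balanced condition $g_1 = g_3$ or $g_2 = g_4$. The parallel computation for $\asi$ yields the same correction term, handling both directions of the biconditional for type (III).

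The main obstacle is this direct computation, requiring careful bookkeeping of the wrap-around offsets and the parity-driven deflation toggling across the five regimes of $s_k$ so that, after expansion, the squared and cross-terms in the $p_i$ collapse into the clean factored form $(g_3 - g_1)(g_4 - g_2)$ (a useful intermediate identity being $p_1^2 + p_3^2 - p_2^2 - p_4^2 - 2 p_1 p_3 + 2 p_2 p_4 = (p_1 - p_3)^2 - (p_2 - p_4)^2$, which combines with the linear-in-$q$ contribution to produce the factorization). Once the identity is in hand, the biconditional follows by combining cases: $\mathcal{O}$ is orbitmesic with respect to $\as_x$ if and only if $\overline{\mathcal{O}}$ is of type (I), (II), or (IV), or of type (III) with balanced gaps---precisely the condition ``$\mathcal{O}$ avoids $1324$ or is balanced.''
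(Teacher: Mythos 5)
Your proposal follows essentially the same route as the paper: classify orbits by their deflated packed orbit, apply \Cref{prop:Stable} (via $x$-stability and the gcd condition) to the $1324$-avoiding types, and handle the $1324$-containing type by a direct gap computation yielding an orbit sum of the form $q(q+1)$ plus a product of gap differences, which vanishes exactly when the orbit is balanced. The only divergences are cosmetic: the paper treats the singleton packed orbit via a swap-closure argument rather than \Cref{prop:Stable} (both work), and one small correction --- the $\asi$ computation produces the \emph{negative} of the $\ase$ correction term, not the same term; this is immaterial here since only its vanishing matters, but it is the cancellation that drives the total-sum homomesy in \Cref{Z4_totalsum}.
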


\begin{proof}
    In \Cref{thm:near-homomesy-ABD} we will show that if $\mathcal{O}$ avoids 1324 then $\mathcal{O}$ exhibits orbitmesy with respect to the antipodal sum statistic $\as_x$.  In \Cref{prop:near-homomesy-C} we will prove that if $\mathcal{O}$ contains 1324 then the average value of $\as_x$ on $\mathcal{O}$ is $q+1$ if and only if $\mathcal{O}$ is balanced.  Part (1) of \Cref{lem:GlobalAverage} says that the global average of $\as_x$ is equal to $q+1$, implying the result.
\end{proof}

\begin{proposition}\label{thm:near-homomesy-ABD}
    Let $P=\calZ_4$ and $x\in P$. Let $\mathcal{O}$ be a promotion orbit of $\Inc^q(P)$ that avoids $1324$. Then $\mathcal{O}$ exhibits orbitmesy with respect to the antipodal sum statistic $\as_x$.
\end{proposition}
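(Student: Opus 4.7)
The plan is to reduce the statement to a finite case check using the deflation machinery of \Cref{sec:PromotionBasics} and the stability criterion \Cref{prop:Stable}. The key preliminary observation is that since deflation preserves the relative order of labels and the pattern $1324$ requires four strictly increasing values, an increasing labeling $f$ of $\calZ_4$ contains $1324$ if and only if $\overline{f}$ does. In particular $\mathcal{O}$ avoids $1324$ precisely when its deflated orbit $\overline{\mathcal{O}}$ avoids $1324$, and this is automatic when $\overline{f}$ has fewer than four distinct labels. I would then enumerate the packed labelings of $\calZ_4$ together with their promotion orbits, obtaining by direct jeu-de-taquin computation exactly four orbits---namely $\{1212\}$, $\{1213,1312,1323,2312,2313\}$, $\{1324,2413\}$, and $\{1423,3412,2314\}$, of sizes $1,5,2,3$ respectively, in agreement with \Cref{tab:Z4Orbits}. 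Only the orbit $\{1324,2413\}$ contains a labeling realizing the pattern $1324$, so the hypothesis that $\mathcal{O}$ avoids $1324$ is equivalent to $\overline{\mathcal{O}}$ being one of the three other orbits.

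For each of these three permitted deflation orbits I would then verify the two hypotheses of \Cref{prop:Stable}. For the $x$-stability condition, tabulate the multiset $\{f(x),f(\kappa(x))\}_{f\in\overline{\mathcal{O}}}$ for each $x\in\calZ_4$; the left-right symmetry of $\calZ_4$ means it suffices to check one minimal and one maximal element. For instance, the $r=3$ orbit yields the multiset $\{1,1,1,2,2,2,2,3,3,3\}$ at the leftmost minimal element, whose multiplicities are symmetric under $k\mapsto 4-k$, and the remaining tabulations are similarly symmetric. For the gcd condition, observe that a binary word of length $q$ with $r$ ones has possible periods $\ell=q/d$ for divisors $d$ of $r$, so $\gcd(r\ell/q,\tau)=\gcd(d,\tau)$; checking the handful of pairs with $\tau\in\{1,5,3\}$ and $d$ a divisor of $r\in\{2,3,4\}$ shows this gcd is always $1$. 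Applying \Cref{prop:Stable} then yields orbitmesy of $\as_x$ on $\mathcal{O}$ for every $x\in\calZ_4$.

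The main obstacle is the finite casework itself, and the step to handle with care is the stability tabulation, since one must correctly identify $\kappa$ and verify the symmetry of multiplicities separately for each orbit and each choice of $x$. What is pleasant about the division of labor is that the single packed orbit excluded by our hypothesis, $\{1324,2413\}$, is exactly the orbit for which the gcd hypothesis of \Cref{prop:Stable} fails when $\ell=q$, since $\gcd(4,2)=2$; that orbit will instead be handled under the complementary ``balanced'' clause of \Cref{thm:near-homomesy}.
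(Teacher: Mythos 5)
Your proposal is correct and follows the paper's proof in its essentials: both arguments enumerate the four packed promotion orbits of $\calZ_4$, observe that avoiding $1324$ excludes exactly the orbit $\{1324,2413\}$, and then verify the hypotheses of \Cref{prop:Stable} ($x$-stability plus the gcd condition) for the surviving deflation orbits. The one genuine divergence is the orbit $\{1212\}$: the paper handles it separately via swap-closure, showing $\rev(\Con(f))$ is a cyclic rotation of $\Con(f)$ so that $\swap(f)=\Pro^i(f)$ lies in $\mathcal{O}$, and then invoking \Cref{thm:swap_orbitmesy}; you instead fold it into the \Cref{prop:Stable} framework, which works since $\tau=1$ makes the gcd condition automatic and $\{1212\}$ is trivially $x$-stable. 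Your route is more uniform; the paper's buys an illustration of the swap machinery it developed in \Cref{sec:swap}. One small algebra slip worth fixing: with $\ell=q/d$ you have $r\ell/q=r/d$, not $d$, so the gcd to check is $\gcd(r/d,\tau)$ rather than $\gcd(d,\tau)$. Since $d\mapsto r/d$ permutes the divisors of $r$, the collection of gcds you tabulate is the same either way and every value is $1$, so the conclusion is unaffected — but as written the formula is inconsistent with your own (correct) computation $\gcd(4,2)=2$ for the excluded orbit at $\ell=q$, where $d=1$.
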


\begin{proof}
It will be convenient for us to be able to understand the packed labelings for $P$ explicitly. These sit in four orbits.  The first orbit, which we will call orbit $A$, consists of one one labeling:
\begin{center}
\begin{tikzpicture}
    \node (A) at (0,0) {1};
    \node (B) at (1,1) {2};
    \node (C) at (2,0) {1};
    \node (D) at (3,1) {2};
    \draw (A) -- (B);
    \draw (B) -- (C);
    \draw (C) -- (D);
    \draw (3.5,0.25) edge[->, out = -45, in = 45, looseness=5] (3.5,0.75);
\end{tikzpicture}
\end{center}
The second orbit, which we will call orbit $B$, is the orbit shown in \Cref{ex:prop.Stable}.  The third orbit, which we will call orbit $C$, consists of the following two labelings:
\begin{center}
\begin{tikzpicture}
    \node (A1) at (0,0) {1};
    \node (B1) at (1,1) {3};
    \node (C1) at (2,0) {2};
    \node (D1) at (3,1) {4};
    \draw (A1) -- (B1);
    \draw (B1) -- (C1);
    \draw (C1) -- (D1);
   \node (A2) at (5,0) {2};
    \node (B2) at (6,1) {4};
    \node (C2) at (7,0) {1};
    \node (D2) at (8,1) {3};
    \draw (A2) -- (B2);
    \draw (B2) -- (C2);
    \draw (C2) -- (D2);
   \draw[->]  (4,0.5) -- (4.25,0.5);
   \draw[->]  (4,0.5) -- (3.75,0.5);
\end{tikzpicture}
\end{center}
Finally, the last orbit, which we will call orbit $D$, is the orbit shown in \Cref{ex:linear_ext}. 

From \Cref{prop:ProRestricts}, the elements of $\mathcal{O}$ deflate to elements of exactly one of the packed orbits, which we call $\overline{\mathcal{O}}$.  Then from \Cref{cor:can-find-lift}, the elements of $\mathcal{O}$ deflate to \emph{all} of the elements of $\overline{\mathcal{O}}$.  Since $\mathcal{O}$ avoids $1324$, we know $\overline{\mathcal{O}}\neq C$.

Suppose first that $\overline{\mathcal{O}}=A$. Given $f \in \mathcal{O}$, we can decompose $f$ into its deflation $1212$ and $\Con(f) = (c_1,\ldots,c_q)$. From \Cref{lem:SwapOnContent} and \Cref{lem:SwapAndDeflation}, we know that $\swap(f)$ has the reverse content word, $\rev(\Con(f))$, and same deflation. Since $\Con(f)$ is a binary word containing only two 1's, $\rev(\Con(f))=\rho^i(\Con(f))$ for some $i$.  By \Cref{lem:ProRotateContent}, $\Pro^i(f)$ has content word $\rho^i(\Con(f))=\rev(\Con(f))$ and by \Cref{prop:ProRestricts}, $\Pro^i(f)$ has deflation $1212$.  \Cref{remark:defl_con} tells us this means $\Pro^i(f)=\swap(f)$.  Thus, $\swap(f) \in \mathcal{O}$, and by \Cref{cor:TwoTypesOfOrbits}, $\swap(\mathcal{O}) = \mathcal{O}$. The claim holds by \Cref{thm:swap_orbitmesy}.

Next, suppose that $\overline{\mathcal{O}}=B$. Let $f \in \mathcal{O}$ and let $\ell$ be the period of $\Con(f)$. Since $f$ uses three distinct numbers, $\ell$ must either be $q$ or $\frac{q}{3}$.  In either case, $\gcd(\frac{3\ell}{q},5) = 1$. We also observe that $\overline{\mathcal{O}} = B$ is $x$-stable (Example 6.5 shows this when $x$ is an antipodal element). Therefore, the result follows in this case by \Cref{prop:Stable}.

Finally, suppose $\overline{\mathcal{O}}=D$. We will use the same method in this case as we did for when $\overline{\mathcal{O}}=B$.  Let $f \in \mathcal{O}$ and let $\ell$ be the period of $\Con(f)$. Since $f$ uses four distinct numbers, $\ell$ must be $q$, $\frac{q}{2}$ or $\frac{q}{4}$.  In all cases, $\gcd(\frac{4\ell}{q},3) = 1$. Since $\overline{\mathcal{O}} = D$ is $x$-stable, the result follows by \Cref{prop:Stable}.
\end{proof}

\begin{proposition}\label{prop:near-homomesy-C}
    Let $P=\calZ_4$ and $x\in P$. If $\mathcal{O}$ is a promotion  orbits of $\Inc^q(P)$ that contains $1324$ then the average value of the antipodal sum statistic $\as_x$ equals $q+1$ if and only if $\mathcal{O}$ is balanced.
\end{proposition}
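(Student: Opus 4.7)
The plan is as follows. Since $\mathcal{O}$ contains the pattern $1324$, by \Cref{prop:ProRestricts} and \Cref{cor:can-find-lift} its deflation $\overline{\mathcal{O}}$ must be the unique packed orbit of $\calZ_4$ whose labelings contain the pattern $1324$, namely the two-element orbit $C=\{1324,2413\}$. So $r=4$ and $\tau=2$, and by \Cref{thm:OrderComparedToDeflation} we have $|\mathcal{O}|=2\ell/\gcd(4\ell/q,2)$, where $\ell$ is the period of $\Con(f)$ for any $f\in\mathcal{O}$. Since $\Con(f)$ has exactly four $1$'s, the only possible periods are $\ell\in\{q,q/2,q/4\}$. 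I would parametrize each orbit by the cyclic sequence $(g_1,g_2,g_3,g_4)$ of gaps between consecutive $1$'s in $\Con(f)$, where $g_1+g_2+g_3+g_4=q$; the balanced condition becomes $g_1=g_3$ or $g_2=g_4$.

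The small-period cases are easy. If $\ell=q/4$, all gaps are equal, and if $\ell=q/2$, the rotational symmetry forces $g_1=g_3$ and $g_2=g_4$; both subcases are balanced. In these cases $|\mathcal{O}|=q/2$, and a short direct computation (which is a simplification of the main case below) shows that the average equals $q+1$.

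The main case is $\ell=q$, where $|\mathcal{O}|=q$. After rotating, I would assume $p_1=1$, so the $1$'s of $\Con(f)$ sit at positions $p_1=1$, $p_2=1+g_1$, $p_3=1+g_1+g_2$, $p_4=1+g_1+g_2+g_3$. Using \Cref{prop:ProRestricts}, I track how the deflation of $\Pro^k(f)$ evolves: it toggles between $1324$ and $2413$ at exactly the values of $k$ where the leading entry of the shifted content word is $1$, so the deflation at step $k$ is $1324$ iff $n_k:=\#\{i:p_i\le k\}$ is even. When the deflation is $1324$, $\ase(\Pro^k(f))=q_1+q_4$ (smallest plus largest $1$-position of $\rho^k(\Con(f))$); when it is $2413$, $\ase(\Pro^k(f))=q_2+q_3$. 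I would then partition $k\in\{0,\dots,q-1\}$ into five regimes indexed by $n_k\in\{0,1,2,3,4\}$; in each regime the sorted $1$-positions $q_j$ are explicit linear functions of $k$. Summing over each regime and combining, the total simplifies to
\[
\sum_{k=0}^{q-1}\ase(\Pro^k(f)) \;=\; q(q+1)+(g_1-g_3)(g_2-g_4).
\]
Dividing by $|\mathcal{O}|=q$, the average equals $q+1$ exactly when $(g_1-g_3)(g_2-g_4)=0$, i.e.\ when the orbit is balanced. The statement for $\asi$ follows by an analogous regime computation giving $\sum_k\asi(\Pro^k(f))=q(q+1)-(g_1-g_3)(g_2-g_4)$, or equivalently from the easier identity $\sum_{k=0}^{q-1}\tot(\Pro^k(f))=2q(q+1)$ combined with $\ase+\asi=\tot$.

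The main obstacle is the regime-by-regime bookkeeping in the $\ell=q$ case: five linear sums in $k$ must be computed and combined, and the resulting degree-two polynomial in $g_1,\dots,g_4$ must be massaged into the clean form $q^2+(g_1-g_3)(g_2-g_4)$. This identity is not transparent from the setup, so I would verify it on a small imbalanced example (e.g.\ $q=8$ with gaps $(3,1,2,2)$, which has $(g_1-g_3)(g_2-g_4)=-1$) before trusting the general formula.
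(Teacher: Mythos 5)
Your proposal is correct and follows essentially the same route as the paper's proof: identify the deflated orbit as $\{1324,2413\}$, parametrize the content word by the cyclic gaps between its four $1$'s (the paper's $\alpha,\beta,\gamma,\delta$ are these gaps minus one), track the toggling of the deflation as $1$'s pass the front of the content word, and sum $\ase$ over the orbit to obtain $q(q+1)$ plus a product of gap differences that vanishes exactly when the orbit is balanced. The only cosmetic differences are your explicit regime bookkeeping and your optional shortcut for $\asi$ via $\ase+\asi=\tot$, versus the paper's symmetry argument exchanging exterior and interior sums between $1324$ and $2413$.
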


\begin{proof}
By \Cref{prop:ProRestricts}, there exists $f\in\mathcal{O}$ such that $\bar{f} = 1324$ and if $\mathbf{c} = (c_1,\ldots,c_q) = Con(f)$, then $c_q = 1$. We set $\alpha, \beta,\gamma$ to be the nonnegative integers such that $c_{\alpha + 1}=c_{\alpha + \beta + 2} = c_{\alpha+\beta+\gamma+3}=c_q=1$ and $\alpha+\beta+\gamma+3<q$. Let $\delta = q - 4 - \alpha - \beta-\gamma$. 
With this notation, $\mathcal{O}$ being balanced is equivalent to having $\beta = \delta$ or $\alpha = \gamma$.

First assume $\as_x$ is the exterior antipodal sum statistic $\ase$. Our proof method will be similar to that of \Cref{prop:Stable}. We will begin by adding up the  exterior antipodal sums of $\{(1324, \rho^i(\mathbf{c})): 0 \leq i \leq \alpha\}$ where we are identifying each ordered pair $(g,v)$ with the increasing labeling $f$ such that $\overline{f}=g$ and $\Con(f)=v$. Note that for these values of $i$, $\Pro^i(f)= \Pro^i(\overline{f}, \mathbf{c}) = (\overline{f}, \rho^i(\mathbf{c}))$ from \Cref{lem:ProRotateContent} and \Cref{prop:ProRestricts}. To compute $\ase(1324, \rho^i(\mathbf{c}))$ where $0 \leq i \leq \alpha$, we add the index of the first and last nonzero entries of $\rho^i(\mathbf{c})$.  That is, $\ase(1324, \rho^i(\mathbf{c}))=(\alpha+1-i)+(q-i)$. Then, the sum of all these exterior antipodal sums is given by \begin{align*}[(\alpha + 1)+(q)] + [(\alpha)+ (q-1)] + \cdots + [(1)+(q-\alpha)] &= \sum_{j=0}^\alpha (q-\alpha+1) + 2j\\ &= (q-\alpha+1)(\alpha+1) + (\alpha+1)\alpha\\ &=(q+1)(\alpha+1)\\
&= (\alpha + \beta + \gamma + \delta + 5)(\alpha + 1).\end{align*}

Since $\mathbf{c}$ is a binary word with four 1's,  it could have no symmetry, 2-fold symmetry, or 4-fold symmetry.  If $\mathbf{c}$ has 2-fold or 4-fold symmetry then $\alpha=\gamma$, $\beta=\delta$ and, applying \Cref{lem:ProRotateContent} and \Cref{prop:ProRestricts}, we see $\mathcal{O} = \{(1324,\rho^i(\mathbf{c})): 0 \leq i \leq \alpha\} \cup \{(2413, \rho^i(\mathbf{c})):\alpha+1 \leq i \leq \alpha + \beta + 1\}$.
The exterior antipodal sums of labelings in the first set is computed above while for the second set it is $(\beta + 2\gamma + \delta + 5)(\beta+1)$ by a similar computation. 
Therefore, in this case the sum of $\as_e$ over all elements of $\mathcal{O}$ is $(2\alpha + 2\beta + 5)(\alpha + \beta + 2)$, which is equal to $(q+1)(\frac{q}{2}) = (q+1) \vert \mathcal{O} \vert$ since $\alpha + \beta + 2 = \frac{q}{2}=|\mathcal{O}|$.

In the case when $\mathbf{c}$ has no symmetry, by \Cref{lem:ProRotateContent} and \Cref{prop:ProRestricts}, $\mathcal{O} = \{(1324,\rho^i(\mathbf{c}): 0 \leq i \leq \alpha\} \cup \{2413, \rho^i(\mathbf{c}:\alpha+1 \leq i \leq \alpha + \beta + 1\} \cup \{(1324,\rho^i(\mathbf{c}): \alpha + \beta + 2 \leq i \leq \alpha+\beta + \gamma + 2\} \cup \{2413, \rho^i(\mathbf{c}:\alpha+\beta+\gamma+3 \leq i \leq q-1\}$. Adding the exterior antipodal sums of all these labelings gives 
$$(\alpha + \beta + \gamma + \delta + 5)(\alpha+1) + ( \beta + 2\gamma + \delta + 5)(\beta+1)+ (\alpha + \beta + \gamma + \delta + 5)(\gamma+1)+ ( \delta + 2\alpha + \beta + 5)(\delta+1)$$
which simplifies to 
\begin{align} (\alpha + \beta + \gamma + \delta + 4)(\alpha + \beta + \gamma + \delta + 5) + (\alpha-\gamma)(\delta-\beta)=q(q+1) + (\alpha-\gamma)(\delta-\beta)\label{eq:NotAvoiding1324}.
\end{align}
Since $|\mathcal{O}|=q$ in this case, the average value of $\as_e$ on the elements of $\mathcal{O}$ is $q+1$ exactly when $\alpha = \gamma$ or $\beta=\delta$.

Considering both cases, we see that the average value of $\as_e$ on the elements of $\mathcal{O}$ is $q+1$ exactly when $\alpha=\gamma$ or when $\beta=\delta$.  In other words, the average value of $\as_e$ on the elements of $\mathcal{O}$ is $q+1$ exactly when $\mathcal{O}$ is balanced.

Now let $\as_x$ be the interior sum statistic $\asi$.
Since the interior antipodal sum of $\{(1324, \rho^i(\mathbf{c})): 0 \leq i \leq \alpha\}$ is the same as the exterior antipodal sum of $\{(2413, \rho^i(\mathbf{c})): 0 \leq i \leq \alpha\}$ and the interior antipodal sum of  $\{(2413, \rho^i(\mathbf{c})): \alpha+1 \leq i \leq \alpha+\beta+1\}$ is the same as the exterior antipodal sum of $\{(1324, \rho^i(\mathbf{c})): \alpha+1 \leq i \leq \alpha+\beta+1\}$, it follows from the proof for $\ase$ that the interior antipodal sum has average value $q+1$ on the orbit $\mathcal{O}$ exactly when $\mathcal{O}$ is balanced.
\end{proof}

\begin{remark}
    From \Cref{cor:can-find-lift} and the fact that $1324$ and $2413$ form an orbit in $\Inc^4(\calZ_4)$, we can see that \Cref{thm:near-homomesy} could have been stated equivalently as either of the following:
    \begin{itemize}
        \item Let $P=\calZ_4$ and $x\in P$. Let $\mathcal{O}$ be a promotion orbit of $\Inc^q(P)$. Then $\mathcal{O}$ exhibits orbitmesy with respect to the antipodal sum statistic $\as_x$ if and only if $\mathcal{O}$ avoids $2413$ or is balanced.
        \item Let $P=\calZ_4$ and $x\in P$. Let $\mathcal{O}$ be a promotion orbit of $\Inc^q(P)$. Then $\mathcal{O}$ exhibits orbitmesy with respect to the antipodal sum statistic $\as_x$ if and only if $\mathcal{O}$ avoids $1324$ and $2413$ or is balanced.
    \end{itemize}
\end{remark}

Note that for $q=1,2,3$ there are no orbits that contain a 1324 pattern and for $q=4,5$ all such orbits are balanced. Thus in these cases, both antipodal sum statistics are homomesic on promotion orbits of $\Inc^q(\calZ_4)$.  For $q\geq6$, neither antipodal sum statistic is homomesic on promotion orbits of $\Inc^q(\calZ_4)$.  For example, for $q=6$ the 190 labelings of $\Inc^q(\calZ_4)$ are partitioned into 16 promotion orbits.  Of these orbits, 14 meet the conditions in \Cref{thm:near-homomesy} and therefore have average exterior antipodal sum and interior antipodal sum of $q+1=7$.  There are two orbits that do not meet the conditions in \Cref{thm:near-homomesy}.  One of these orbits  has average exterior antipodal sum of $41/6$ and average interior antipodal sum $43/6$; the other has these values reversed.  Note that $\swap$ sends the elements of each of these ``bad" orbits to each other.

Our final result gives a homomesy on $\Inc^q(Z_4)$, as a corollary of the orbitmesy of \Cref{thm:near-homomesy}. 
\begin{corollary}
\label{Z4_totalsum}
    Promotion on $\Inc^q(\calZ_4)$ is $2(q+1)$-mesic with respect to the total sum statistic.
\end{corollary}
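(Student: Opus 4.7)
The plan is to split $\tot = \ase + \asi$ pointwise and apply \Cref{thm:near-homomesy} where it delivers orbitmesy of both summands, handling the remaining orbits by a direct computation on the content word.

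For any promotion orbit $\mathcal{O}$ of $\Inc^q(\calZ_4)$ that either avoids the pattern $1324$ or is balanced, \Cref{thm:near-homomesy} gives orbitmesy of both $\ase$ and $\asi$, and \Cref{lem:GlobalAverage}(1) pins the global average of each antipodal sum statistic at $q+1$. Adding the two orbit sums immediately yields orbit average $2(q+1)$ for $\tot$ in this case.

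The work is in the remaining case: $\mathcal{O}$ contains $1324$ and is imbalanced. Such an orbit deflates to the two-element orbit $C = \{1324, 2413\}$ of packed labelings (see the proof of \Cref{thm:near-homomesy-ABD}), and in the $(\alpha,\beta,\gamma,\delta)$ parameters of \Cref{prop:near-homomesy-C}, imbalance is the condition $\alpha \neq \gamma$ and $\beta \neq \delta$. First, I would note that any nontrivial rotational symmetry of $\mathbf{c} = \Con(f)$ forces both $\alpha = \gamma$ and $\beta = \delta$, so imbalance implies $\mathbf{c}$ has trivial period $\ell = q$; then \Cref{thm:OrderComparedToDeflation} gives $|\mathcal{O}| = 2q/\gcd(4,2) = q$.

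The key observation is that since each deflation in $C$ is a linear extension of $\calZ_4$, every $f \in \mathcal{O}$ uses four distinct labels, so $\tot(f)$ is simply the sum of the positions of the $1$'s in $\Con(f)$ and depends only on $\Con(f)$. By \Cref{lem:ProRotateContent}, as $f$ ranges over $\mathcal{O}$ the content word $\Con(f)$ runs once through each of the $q$ rotations of $\mathbf{c}$, and each of the four $1$'s in $\mathbf{c}$ visits every position in $\{1,\dots,q\}$ exactly once across these rotations; hence $\sum_{f\in\mathcal{O}} \tot(f) = 4\cdot q(q+1)/2 = 2q(q+1)$, giving orbit average $2(q+1)$. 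The only subtle step is verifying that imbalanced orbits with deflation $C$ force $\mathbf{c}$ to have trivial period, which is a quick inspection of the $(\alpha,\beta,\gamma,\delta)$ parameters; everything else is a one-line tally.
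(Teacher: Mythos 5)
Your proof is correct, and while the first case (orbits avoiding $1324$ or balanced) is identical to the paper's argument, your treatment of the remaining case is a genuinely different route. The paper handles an imbalanced orbit containing $1324$ by reusing the explicit formula \cref{eq:NotAvoiding1324} from the proof of \Cref{prop:near-homomesy-C}: the exterior and interior antipodal orbit sums are $q(q+1)+(\alpha-\gamma)(\delta-\beta)$ and $q(q+1)+(\beta-\delta)(\alpha-\gamma)$ respectively, and the two error terms cancel when added. You instead bypass \Cref{prop:near-homomesy-C} entirely: you observe that the deflations $1324$ and $2413$ are linear extensions, so $\tot(f)$ is just the sum of the positions of the $1$'s in $\Con(f)$; that imbalance rules out any nontrivial rotational symmetry of the content word (since $2$-fold or $4$-fold symmetry forces $\alpha=\gamma$ and $\beta=\delta$), so $\ell=q$ and $|\mathcal{O}|=2q/\gcd(4,2)=q$ by \Cref{thm:OrderComparedToDeflation}; and that the $q$ elements of the orbit realize each rotation of $\mathbf{c}$ exactly once, giving $\sum_{f\in\mathcal{O}}\tot(f)=4\binom{q+1}{2}=2q(q+1)$. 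This is essentially the argument of \Cref{linear_ext} adapted to the case $\gcd(r\ell/q,\tau)=2\neq 1$ --- precisely the example the paper flags after \Cref{ex:linear_ext} as not covered by that proposition. Your version is more self-contained and elementary; the paper's version is shorter given that \cref{eq:NotAvoiding1324} has already been established, and it makes visible the cancellation phenomenon between the two antipodal statistics (e.g.\ the averages $41/6$ and $43/6$ summing to $14$ for $q=6$), which your argument does not expose. Both are valid proofs.
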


\begin{proof}
If $\mathcal{O}$ is an orbit which avoids $1324$ or is balanced, then by \Cref{thm:near-homomesy}, the average value on both antipodal sum statistics on $\mathcal{O}$ is $q+1$. Therefore, since given $f \in \mathcal{O}$, $\tot(f) = \mathcal{A}_e(f) + \mathcal{A}_i(f)$, the result follows immediately.

It remains to consider an orbit $\mathcal{O}$ which does not avoid 1324 and is not balanced. Therefore, there exists $f \in \mathcal{O}$ such that $\overline{f} = 1324$ and $\mathbf{c} = (c_1,\ldots,c_q) = \Con(f)$ has $c_q = 1$. As previously, define  $\alpha,\beta,\gamma$ to be the nonnegative integers such that $c_{\alpha+1} = c_{\alpha+\beta+2} = c_{\alpha+\beta+\gamma+3}= 1$ and $\alpha+\beta+\gamma+3 < q$. Let $\delta = q - 4 - \alpha - \beta - \gamma$. Since $f$ is not balanced,  we have $\alpha \neq \gamma$ and $\beta \neq \delta$.

Recall that in \Cref{eq:NotAvoiding1324} from \Cref{prop:near-homomesy-C}, we gave the simplified expression from adding up all antipodal sums $\mathcal{A}_x$ of $\mathcal{O}$.  Combining the two distinct antipodal pairs, we have 
\begin{align*}
\tot(\mathcal{O}) &= (\alpha + \beta + \gamma + \delta + 4)(\alpha + \beta + \gamma + \delta + 5) + (\alpha-\gamma)(\delta-\beta)\\ 
&+ (\alpha + \beta + \gamma + \delta + 4)(\alpha + \beta + \gamma + \delta + 5) + (\beta-\delta)(\alpha-\gamma)\\
&= 2(\alpha + \beta + \gamma + \delta + 4)(\alpha + \beta + \gamma + \delta + 5)\\
&=2q(q+1)
\end{align*}
as desired.
\end{proof}

\section{Future work}\label{sec:future}

This work raises several natural questions and suggests promising directions for further investigation.

Overall, we would like to find other, natural instances of orbitmesy. A natural starting  point would be to look for remarks in the literature, such as \cite[Propositions 38 and 39]{Vorland} of non-homomesic pairs of maps and statistics and try to characterize which subset of orbits exhibits orbitmesy. One could also undertake a systematic search
 using FindStat \cite{FindStat}, as was done \cite{FindStat1,FindStat2} for other phenomena in dynamical algebraic combinatorics. 

It would also be interesting to extend our results on small zig-zag posets. For instance, is there a pattern to the order of promotion on $\calZ_n$ as $n$ grows? As we mentioned in \Cref{rmk:OrderOnZnLargern}, the order of promotion on $\Inc^q(\calZ_6)$ for $q \geq 6$ appears to be $13090q$. Since this number factors into small primes, namely  $13090 = 2 \times 5 \times 7 \times 11 \times 17$, there is hope that in general there will be a nice formula for the order of promotion on $\calZ_n$. 

\Cref{thm:near-homomesy} gives conditions under which a promotion orbit of $\calZ_4$ exhibits orbitmesy with respect to the antipodal sum statistic. It is a natural question to ask whether a similar result can be found for $\calZ_n$ with $n\neq 4$.
We restrict our attention to $n$ even, since antipodal sum is not defined if $n$ is odd. 
The next natural poset to consider is $\calZ_6$.
We can use both \Cref{linear_ext} and \Cref{prop:Stable} to find packed increasing labelings of $\mathcal{Z}_6$ whose inflations are orbitmesic (with respect to the total sum statistic or the antipodal sum statistic).
For example the two following packed increasing labelings in $\Inc^5(\mathcal{Z}_6)$ have promotion orbits with cardinality equal to 17 and which are $x$-stable, when we take $x$ to be the leftmost minimal element.
Thus, \cref{prop:Stable} tells us that any inflation of either orbit which makes ${\gcd(5\cdot \ell/q, 17)=1}$ is also orbitmesic, where $\ell$ is the period of the content vector of the inflation. (In this case, the orbit of any inflation is orbitmesic, since $q/\ell$ divides $5$, so it is either $5$ or $1$. That is, $\ell/q$ equals either $1$ or $1/5$, making the gcd $1$.)
\begin{center}
\begin{tabular}{ccc}
\begin{tikzpicture}
    \node (A) at (0,0) {4};
    \node (B) at (1,1) {5};
    \node (C) at (2,0) {1};
    \node (D) at (3,1) {3};
    \node (E) at (4,0) {1};
    \node (F) at (5,1) {2};
    \draw (A) -- (B);
    \draw (B) -- (C);
    \draw (C) -- (D);
    \draw (D) -- (E);
    \draw (E) -- (F);
\end{tikzpicture}&&
\begin{tikzpicture}
    \node (A) at (0,0) {4};
    \node (B) at (1,1) {5};
    \node (C) at (2,0) {1};
    \node (D) at (3,1) {3};
    \node (E) at (4,0) {2};
    \node (F) at (5,1) {4};
    \draw (A) -- (B);
    \draw (B) -- (C);
    \draw (C) -- (D);
    \draw (D) -- (E);
    \draw (E) -- (F);
\end{tikzpicture}
\end{tabular}
\end{center}
More work is needed for a full classification of orbitmesic promotion orbits with respect to the antipodal sum statistic.

One could similarly ask to classify orbits exhibiting total sum orbitmesy for $\calZ_n$. The total sum homomesy fails for $\Inc^3(\calZ_3)$, which one can readily check by hand. While \Cref{Z4_totalsum} shows that all orbits of $\Inc^q(\calZ_4)$ are orbitmesic with respect to the total sum statistic, this is not true for $\calZ_6$. The smallest $q$ where there exist non-orbitmesic orbits in $\Inc^q(\calZ_6)$ is $q = 5$. 
In this case, there are $707$ increasing labelings and $31$ orbits, $27$ of which have total sum average equal to the global average of $18$. The remaining orbits are of sizes $[35, 35, 22, 22]$ with total sum averages $[628/35, 632/35, 391/22, 401/22]$. A future direction could be to classify orbits in $\Inc^q(\calZ_n)$ which exhibit total sum orbitmesy. 

Finally, in order to characterize promotion orbits of $\Inc^q(\calZ_4)$ which exhibit antipodal sum orbitmesy, we introduced the notion of pattern avoidance for increasing labelings of zigzag posets. Pattern avoidance of posets is a rich topic of study; one can for instance see the extensive database in \cite{PatternDatabase}. 
Pattern avoiding linear extensions of zig-zag posets are alternating permutations that also avoid a given pattern; these have been a fruitful area of research (see e.g.~\cite{Lewis2011,Yan2013}). It is possible that pattern avoiding increasing labelings of zigzag posets would also be an interesting new direction of study.

\section*{Acknowledgments}
This project began at the January 2024 Community in Algebraic and Enumerative Combinatorics workshop at the Banff International Research Station. We would like to thank BIRS and the organizers for the conducive research environment and the opportunity to participate.

The authors thank Corey Vorland for writing  SageMath~\cite{sagemath} code  that was helpful in this research and Ben Adenbaum and Oliver Pechenik for helpful discussions. 

E.~Banaian was supported by Research Project 2 from the Independent Research Fund Denmark (grant no.~1026-00050B).
J.~Striker was supported by Simons Foundation gift MP-TSM-00002802 and NSF grant DMS-2247089.

\bibliographystyle{amsalpha}
\bibliography{bib.bib}

\end{document}